	\newtheoremstyle{slanted}
	{}
	{}
	{\slshape}
	{}
	{\bfseries}
	{.}
	{ }
	{}
	\theoremstyle{slanted}
	\newtheorem{theo}{Theorem}[section]
	\newtheorem*{claim*}{Claim}
	\newtheorem{prop}[theo]{Proposition}
	\newtheorem{lemma}[theo]{Lemma}
	\newtheorem{definition}[theo]{Definition}
	\newtheorem{corollary}[theo]{Corollary}
	\newtheorem{remark}[theo]{Remark}
	\newtheorem{example}[theo]{Example}
	\newcommand{\egdef}{:=}
	\DeclareMathOperator{\Id}{Id}	
	\DeclareMathOperator{\Leb}{Leb}	
	\newcommand{\tend}[3][]{\xrightarrow[#2\to#3]{#1}}
	\newcommand{\ind}[1]{\mathds{1}_{#1}} 
	\newcommand{\ZZ}{\mathbb{Z}}
	\renewcommand{\AA}{\mathbb{A}}
	\newcommand{\RR}{\mathbb{R}}
	\newcommand{\NN}{\mathbb{N}}
	\newcommand{\A}{\mathscr{A}}
	\newcommand{\B}{\mathscr{B}}
	\newcommand{\cd}{C^{d}}
	\newcommand{\td}{T^{\times d}}
	\newcommand{\bz}{\boldsymbol{0}}
	\newcommand{\lb}{\underline{\ell}}
	\newcommand{\nb}{\underline{n}}
	\newcommand{\kb}{\underline{k}}
	\newcommand{\jb}{\overline{j}}
	\newcommand{\Gam}[1]{\gamma_{#1}\bigl(\cd_{n_{\lb}}\bigr)}
	\newcommand{\Gamb}[1]{\gamma_{#1}\bigl(\overline{C}^d_{n_{\lb}}\bigr)}
	\newcommand{\Gamt}[1]{\gamma_{#1}\bigl(\tilde{C}^d_{n_{\lb}}\bigr)}
	\newcommand{\ovC}{\Omega}
	\newcommand{\ngood}{n_{\text{good}}}
	\newcommand{\PaP}{$\mathcal{P}a\mathcal{P}$}
\title{Nearly finite Chacon transformation}
\author{\'{E}lise Janvresse, Emmanuel Roy and Thierry de la Rue}
\address{\'Elise Janvresse: 
Laboratoire Amiénois de Mathématique Fondamentale et Appliquée, CNRS-UMR 
7352, Université de Picardie Jules Verne, 33 rue Saint Leu, F80039 Amiens cedex 
1,
France.}
\email{Elise.Janvresse@u-picardie.fr}
\address{Emmanuel Roy: Laboratoire Analyse, Géométrie et Applications, 
Université Paris 13 Institut Galilée,
99 avenue Jean-Baptiste Clément
F93430 Villetaneuse, France.}
\email{roy@math.univ-paris13.fr}
\address{Thierry de la Rue:
Laboratoire de Mathématiques Rapha\"el Salem,
Université de Rouen, CNRS,
Avenue de l'Université,
F76801 Saint \'Etienne du Rouvray, France.}
\email{Thierry.de-la-Rue@univ-rouen.fr}
\thanks{Research partially supported by French research group GeoSto
(CNRS-GDR3477)}
\begin{document}
\bibliographystyle{amsplain}

\maketitle
\begin{abstract}  
We construct an infinite measure preserving version of Chacon transformation, and prove that it has a property similar to Minimal Self-Joinings in finite measure: its Cartesian powers have as few invariant Radon measures as possible.
\end{abstract}

{\bf Keywords: } Chacon infinite measure preserving transformation, rank-one 
transformation, joinings.

{\bf MSC classification: } 37A40, 37A05.

\section{Introduction}

\subsection{Motivations}

The purpose of this work is to continue the study, started in~\cite{ChaconInfinite} and~\cite{RadonMSJ}, of what the Minimal Self-Joinings (MSJ) property could be in the setting of infinite-measure preserving transformations. We want here to construct an infinite measure preserving  transformation whose Cartesian powers have as few invariant measures as possible. As in the aforementioned papers, we restrict ourselves to Radon measures (giving finite mass to compact sets), since in general there are excessively many infinite invariant measures for a given transformation (think of the sum of Dirac masses along an orbit). 

A first attempt in this direction was to consider the so-called \emph{infinite Chacon transformation} introduced in~\cite{AFS1997}. 
Indeed, the construction of this infinite measure preserving rank-one transformation is strongly inspired by the classical finite measure preserving Chacon transformation, which enjoys the MSJ property~\cite{DJRS1980}. The identification of invariant measures for Cartesian powers of the infinite Chacon transformation was the object of our previous work~\cite{ChaconInfinite}. In addition to the products of graph measures arising from powers of the transformation (see the beginning of Section~\ref{sec:main} for details), we found in the case of infinite Chacon some kind of unexpected invariant measures, the so-called \emph{weird measures}. These weird measures have marginals which are singular with respect to the original invariant measure, but it is shown in~\cite[Example~5.4]{RadonMSJ} that an appropriate convex combination of weird measures can have absolutely continuous marginals. 

We propose here another rank-one transformation, which we call the \emph{nearly finite Chacon transformation}, hereafter denoted by $T$. 
Although it preserves an infinite measure $\mu$, its construction is designed to mimic as much as possible the behaviour of the classical 
Chacon transformation, so that the phenomenon of weird measures disappears. Our main result, Theorem~\ref{thm:msj}, is the following: 
there exists a $\mu$-conull set $X_\infty$ such that, for each $d\ge1$, the ergodic $\td$-invariant Radon measures on $X_{\infty}^d$ 
are the product measure $\mu^{\otimes d}$ and products of graph measures arising from powers of~$T$. Corollary~\ref{cor:main} 
then identifies all $\td$-invariant Radon measures whose marginals are absolutely continuous with respect to $\mu$ as sums of countably 
many ergodic components which are of the form given in the theorem.

Beyond the question of the MSJ property in the infinite measure world, the example presented in this paper 
is also of crucial importance in the study of Poisson suspensions. A Poisson suspension is a finite measure preserving dynamical system constructed
from an infinite measure preserving system: a state of the space is a realization of a Poisson point process whose intensity is the infinite invariant measure, 
and each random point evolves according to the dynamics of the infinite measure preserving transformation (we refer to~\cite{Roy2007} for a complete
presentation of Poisson suspensions). 
Although of different nature, Poisson suspensions share surprising properties with another 
category of  finite measure preserving dynamical systems of probabilistic origin: Gaussian dynamical systems, which are constructed 
from finite measures on the circle.
A beautiful theory has been developped in~\cite{LemParThou00Gausselfjoin}, concerning a special class of Gaussian systems called GAGs (a 
French acronym for \emph{Gaussian systems with Gaussian self-joinings}). The keystone for the construction of a GAG system is a striking 
theorem due to Foia\c{s} and Str$\breve{\mathrm{a}}$til$\breve{\mathrm{a}}$~\cite{FoiasStratila}: if a measure supported on a 
Kronecker subset of the circle appears as the spectral measure of some ergodic stationary process, then this process
is Gaussian. The Poisson counterpart of GAG, called \PaP\ (Poisson suspension with Poisson self-joinings) is presented in~\cite{sushis},
where the construction of a \PaP\ example relies on a theorem \emph{à la Foia\c{s}-Str$\breve{\mathrm{a}}$til$\breve{\mathrm{a}}$} 
(see~\cite[Theorem~3.4]{sushis}). Roughly speaking, according to this theorem, if some ergodic point process evolves under a dynamics
directed by an infinite measure preserving transformation \emph{with special properties}, then this point process is Poissonian.
The special properties needed here are precisely those given by Corollary~\ref{cor:main}. Therefore, systems enjoying those properties
play in the theory of Poisson suspensions the same role as measures supported on Kronecker subset in the setting of Gaussian systems.

For some applications in the study of Poisson suspensions developped in~\cite{sushis}, we also need an additional property which is the existence of
a measurable law of large numbers. Proposition~\ref{prop:rational_ergodicity} shows that the nearly finite Chacon transformation satisfies 
a stronger property called rational ergodicity. 

\subsection{Roadmap of the paper}

Section~2 is devoted to the construction of the nearly finite Chacon transformation, and to first elementary results.
For pedagogical reasons, we start in Section~\ref{sec:cutandstack} by defining the nearly finite Chacon 
transformation with the cutting-and-stacking method on $ \RR_+$ equipped with the 
Lebesgue measure, as it is easier to visualize the structure of the Rokhlin towers in this setting.
Most steps of the construction are identical to construction of the classical Chacon transformation. 
There is just a fast increasing sequence $(n_\ell)$ of integers such that each $n_\ell$-th step of the construction differs from classical Chacon, which ensures that the invariant measure has infinite mass.
Then we turn in Section~\ref{sec:cantorspace} to  a more convenient (but isomorphic) model for our 
purposes, which is a transformation $T$ on a set $X$ of sequences on a countable alphabet. In Section~\ref{sec:typical},
we describe basic properties of a typical point with respect to the invariant measure $\mu$, and define the conull set $X_\infty$ referred to 
in Theorem~\ref{thm:msj}.

Section~\ref{sec:invariantmeasures} contains the main results concerning Radon measures on $X^d$ which are $\td$-invariant. 
Section~\ref{sec:basicradon} first states some basic facts about Radon measures on $X^d$. We give a criterion for such a 
measure to be $\td$-invariant (Lemma~\ref{lemma:diagonal}). We also define a notion of convergence of Radon measures (Definition~\ref{def:convergence}), 
which is specially adapted to the formulation of Hopf's ratio ergodic theorem, and give useful lemmas concerning this convergence.
In Section~\ref{sec:dissipative}, we treat the easy case of totally dissipative measures: Proposition~\ref{prop:dissipative} eliminates
 the possibility of a totally dissipative $\td$-invariant Radon measure supported on $X_\infty^d$.
In Section~\ref{sec:main}, we state our main result (Theorem~\ref{thm:msj}) and establish the bases of a proof by induction on~$d$.
At the end of Section~\ref{sec:main}, we fix once and for all a $\td$-invariant Radon measure $\sigma$ supported on $X_\infty^d$, for some $d\ge2$. 
The remainder of the paper is completely devoted to proving that either $\sigma$ is a graph measure arising from powers of $T$,
or it can be decomposed as a product of two measures on which we can apply the induction hypothesis. In section~\ref{sec:typicalpoint}, we choose once and for all a $\sigma$-typical
point $x\in X_\infty^d$, on the orbit of which we estimate the properties of $\sigma$. We also introduce in Definition~\ref{def:ncrossing} 
the central notion of \emph{$n$-crossings}, which are finite subintervals of $\ZZ$ depending on the position of the orbit of the typical point $x$ with respect to the $n$-th Rokhlin tower of the rank-one construction. 
The analysis of the structure of those $n$-crossings constitutes the core of our proof. In Section~\ref{sec:graph}, we provide a criterion 
for $\sigma$ to be a graph measure arising from powers of $T$, stated in terms of $n$-crossings (Proposition~\ref{prop:synchronized}).

Section~\ref{sec:combinatorics} is devoted to the proof of Proposition~\ref{prop:combinatorics_of_ncrossings}, which is a 
central result in the analysis of the structure of $n$-crossings. Section~\ref{sec:hierarchy} describes a hierarchy of abstract subsets
of $\ZZ$ and provides a lemma on the combinatorics of subsets in this hierarchy (Lemma~\ref{lemma:combinatorics}). 
Then Section~\ref{sec:application} explains how to apply this lemma to the structure of $n$-crossings. Section~\ref{sec:edge}
provides a useful corollary of Proposition~\ref{prop:combinatorics_of_ncrossings} in terms of the measure $\sigma$.

Section~\ref{sec:convergence} is devoted to the study of the convergence of \emph{empirical measures}, which are finite sums
of Dirac masses on points on the orbit of $x$, corresponding to finite subsets of $\ZZ$.
We provide two criteria, Proposition~\ref{prop:convergence_empirical_measures_interval}
and Proposition~\ref{prop:convergence_empirical_measures_pieces}, for a sequence of such empirical measures to converge to $\sigma$.

In Section~\ref{sec:twist} we present the main tool used to decompose $\sigma$ as a product
measure. We introduce the notion of \emph{twisting transformation} (definition~\ref{def:twist}), which is simply a transformation of $X^d$ acting as $T$ on some coordinates, and as $\Id$ on others.
Based on a theorem from~\cite{ChaconInfinite}, Proposition~\ref{prop:product} shows that, if $\sigma$ is invariant by such a twisting transformation, then $\sigma$ decomposes as a product measure to which we can apply the induction hypothesis. Then Proposition~\ref{prop:twist} provides a criterion for $\sigma$ to be invariant by some twisting transformation.

All the preceding tools are used in Section~\ref{sec:end}, where the proof of Theorem~\ref{thm:msj} is completed. If the criterion given by Proposition~\ref{prop:synchronized} for $\sigma$ to be a graph measure fails, then 
for infinitely many integers $n$ there exists some $n$-crossing, not too far from 0,
with some special property. 
We treat several cases according to the positions of these integers with respect to the sequence $(n_\ell)$. With the help of Propositions~\ref{prop:twist} and \ref{prop:product}, we show that in all cases $\sigma$ decomposes as a product of two measures to which the induction hypothesis applies. 

Section~\ref{sec:rational_ergodicity}, which can be read independently, is devoted to the proof of the rational ergodicity of the nearly finite Chacon transformation.

\section{Construction of the nearly finite Chacon transformation}
\label{sec:construction}

\subsection{Cutting-and-stacking construction on $\RR_+$}
\label{sec:cutandstack}
As previously explained, this transformation is designed to mimic the classical 
finite measure preserving Chacon transformation as much as possible, yet it must 
preserve an infinite measure.
The construction will make use of two predefined increasing sequences of 
integers: $1\ll n_1 \ll n_2 \ll \cdots \ll n_\ell \ll \cdots$ and $\ell_0:=1\ll 
\ell_1\ll \ell_2\ll\cdots\ll\ell_k\ll\cdots$, satisfying some growth conditions 
to be precised later (see below conditions~\eqref{eq:condition_lk} 
and~\eqref{eq:condition_nl}). For each $\ell\ge1$, there exists a unique integer $k\ge0$
such that $\ell_k\le \ell <\ell_{k+1}$, and we denote this integer by $k(\ell)$.

In the first step we consider the interval $[0,1)$, which is cut into three 
subintervals of equal length. We take the extra interval $[1,4/3)$ and stack it 
above the middle piece. Then we stack all these intervals left under right, 
getting a tower of height $h_1\egdef4$. The transformation $T$ maps each point 
to the point exactly above it in the tower. At this step $T$ is yet undefined on 
the top  of the tower.

After step $n$ we have a tower of height $h_n$, called tower~$n$, the levels of 
which are intervals of length $1/3^n$. These intervals are closed to the left 
and open to the right. At step~$(n+1)$, tower~$n$ is cut into three subcolumns 
of equal width. 
If $n\notin\{n_\ell:\ell\ge1\}$, we do as in the standard finite measure 
preserving Chacon transformation: we add an extra interval of length $1/3^{n+1}$ 
above the middle subcolumn, and we  stack the three subcolumns left under right 
to get tower~$n+1$ of height $h_{n+1}=3h_n+1$.
If $n=n_\ell$ for some $\ell$, 
we add $h_{n-k(\ell)}$ extra intervals above each of the three 
subcolumns, and a further extra interval above the second subcolumn.  Then we 
stack the three subcolumns left under right and get tower~$n+1$ of height 
$h_{n+1}=3h_n + 3h_{n-k(\ell)} + 1$. (See Figure~\ref{fig:construction}.)

At each step, we pick the extra intervals successively by taking the leftmost 
interval of desired length in the unused part of $\RR_+$. 
Extra intervals used at step $n+1$ are called \emph{$(n+1)$-spacers}. 

We want the Lebesgue measure of tower~$n$ to increase to infinity, which is easily 
satisfied provided the sequence $\ell_k$ grows sufficiently fast. 
Indeed, for each $n\ge1$ we have $ h_{n+1} \le 6h_n+1 \le 7h_n$, whence 
${h_n}/{h_{n+1}}\ge 1/7$. It follows that for each $k\ge0$ and each $\ell_k\le 
\ell<\ell_{k+1}$, 
\[  \Leb(\text{tower }n_\ell+1)\ge \Leb(\text{tower 
}n_\ell)\left(1+\frac{h_{n_\ell-k}}{h_{n_\ell}}\right) \ge 
\left(1+7^{-k}\right). \]
Therefore it is enough for example to assume that for each $k\ge0$,
\begin{equation}
\label{eq:condition_lk}
   \left(1+7^{-k}\right)^{\ell_{k+1}-\ell_k}\ge 2.
\end{equation}
Under this assumption, we 
get at the end a rank-one transformation defined on $\RR_+$  which preserves the 
Lebesgue measure. 

We will also assume that for each $\ell$, 
\begin{equation}
\label{eq:condition_nl}
  n_\ell >  n_{(\ell-1)} + 2\ell.
\end{equation}

\begin{figure}[htp]
  \centering
  \includegraphics[width=10cm]{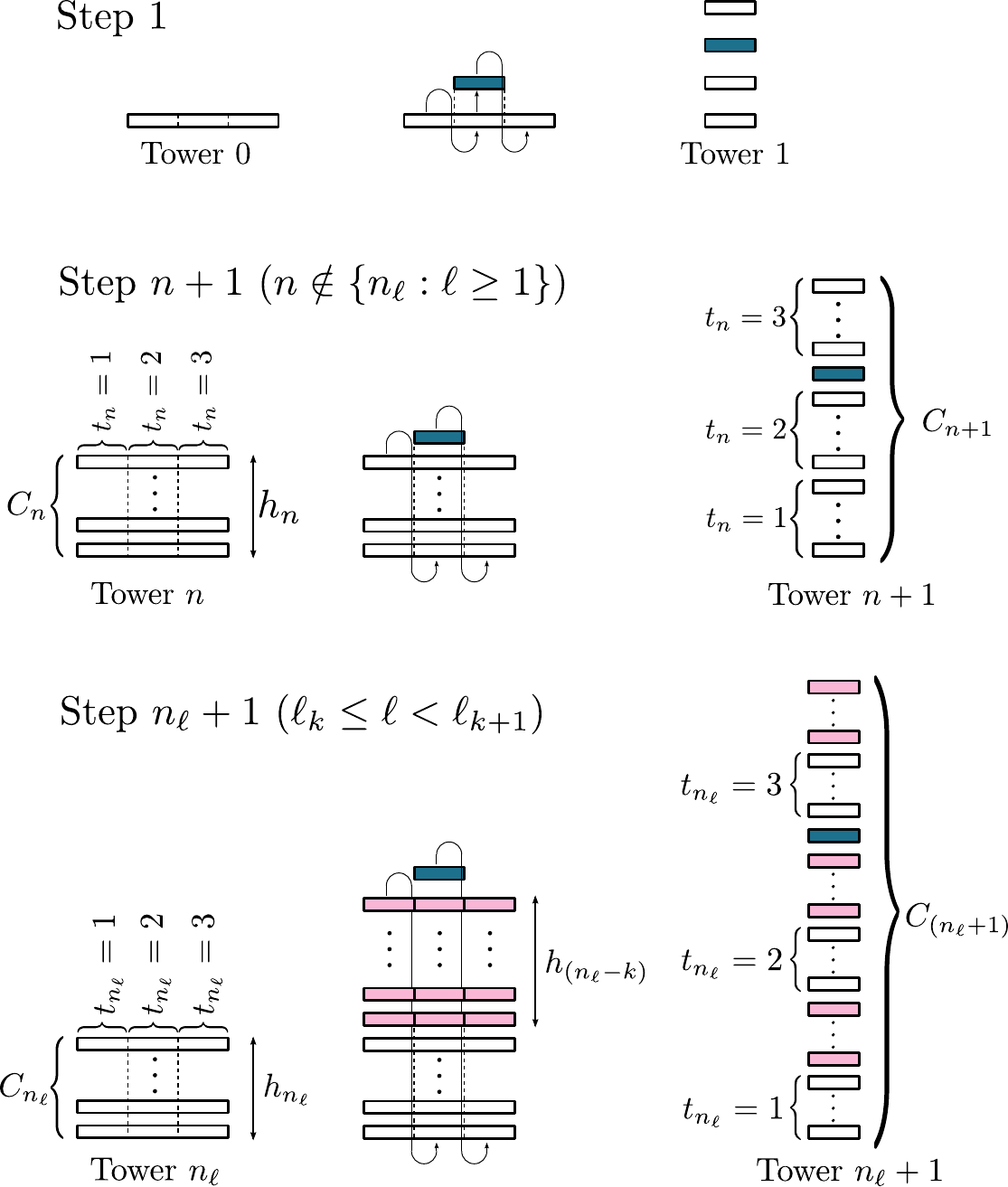}
  \caption{Construction of the nearly finite Chacon transformation by cutting 
and stacking.}
  \label{fig:construction}
\end{figure}

\subsection{Construction on a set of sequences}
\label{sec:cantorspace}
For technical reasons, it will be more convenient to consider a model of the 
nearly finite Chacon transformation in which the ambient space is a totally 
disconnected non compact metric space, and each level of each tower is a compact 
clopen set. 

Consider the countable alphabet $\AA\egdef\{\ast\}\cup\NN$. To each $t\in\RR_+$, 
we associate the sequence $\varphi(t)={\bigl(j_n(t)\bigr)}_{n\ge0}\in\AA^\NN$ 
defined by
\[
j_n(t)\egdef  \begin{cases}
                \ast &\text{ if }t\notin\text{tower }n,\\
                j &\text{ if }t\text{ is in level $j$ of tower $n$ }(0\le 
j<h_n).
              \end{cases}
\]
By condition~\eqref{eq:condition_lk}, $\RR_+=\bigcup_n \text{tower }n$, and for 
each $n$, $\text{tower }n\subset \text{tower }n+1$. Hence for each $t\in\RR_+$,
\begin{equation*}
  \label{eq:sequences-C1}
  \exists n_0\ge0: \forall n<n_0,\ j_n(t)=\ast, \text{ and } \forall n\ge n_0, 
j_n(t)\in\{0,\ldots,h_n-1\}.
\end{equation*}
Moreover, each level of tower $n+1$ is either completely outside tower~$n$ or 
completely inside a single level of tower~$n$. Let us introduce, for each 
$n\ge1$, the map 
$p_n:\{0,\ldots, h_{n+1}-1,\ast\}\to \{0,\ldots, h_{n}-1,\ast\}$ defined by
\begin{itemize}
  \item $p_n(\ast)\egdef\ast$,
  \item $\forall j\in\{0,\ldots, h_{n+1}-1\},\ p_n(j)\egdef \ast$ if level $j$  
of tower $n+1$ is  completely outside tower~$n$, and $p_n(j)\egdef 
j'\in\{0,\ldots, h_{n}-1\}$ if 
  level~$j$  of tower~$n+1$ is  completely inside level $j'$ of tower~$n$.
\end{itemize}
Then the sequence ${\bigl(j_n(t)\bigr)}_{n\ge0}$ satisfies the following 
compatibility condition.
\begin{equation*}
  \label{eq:sequences-C2}
  \forall n\ge0,\  j_n(t)=p_n\bigl(j_{n+1}(t)\bigr).
 \end{equation*}
In particular, $j_n(t)$ completely determines $j_m(t)$ for each $0\le m\le n$. 
We also observe that $p_n$ satisfies the following property:
\begin{equation}
\label{eq:prop_pn}
  \text{If }p_n(j)\in\{0,\ldots,h_n-2\}\text{, then 
}j\in\{0,\ldots,h_{n+1}-2\}\text{ and }p_n(j+1)=p_n(j)+1.
\end{equation}

Now we can define our space $X$, to which belongs $\varphi(t)$ for each 
$t\in\RR_+$:
\[
  X\egdef \left\{(j_n)_{n\ge0}\in\AA^{\NN}: \forall n\ge0,\  
j_n=p_n(j_{n+1})\text{ and }\exists n_0,\  j_{n_0}\neq\ast \right\}.
\]
$X$ inherits its topology from the product topology of $\AA^{\NN}$. In 
particular it is a totally disconnected metrizable space, but it is not compact 
(in fact $X$ is 
not closed in $\AA^{\NN}$, as the infinite sequence $(\ast,\ast,\ldots)$ is in 
$\overline{X}\setminus X$).

For each $n\ge0$ and each $x\in X$, we denote by $j_n(x)$ the $n$-th coordinate of $x$.
For each $j\in\{0,\ldots,h_n-1\}$, we define the subset of $X$ 
\[L_n^j\egdef\{x\in X: j_n(x)=j\}.\] 
Then $L_n^j$ is compact and clopen 
in $X$. Moreover the family of sets $(L_n^j)$
form a basis of the topology on~$X$.

To define the transformation $T$ on $X$, we need the following easy lemma.
\begin{lemma}
\label{lemma:defT}
  For each $x=(j_n)_{n\ge0}\in X$, there exists $\overline{n}$ such that, for 
each $n\ge \overline{n}$, $j_n \in \{0,\ldots,h_n-2\}$.
\end{lemma}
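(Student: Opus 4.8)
The plan is to argue by contradiction: suppose no such $\overline{n}$ exists, so that $j_n(x) = h_n - 1$ for infinitely many $n$. I would like to show this forces $j_n(x) = h_n - 1$ for \emph{all} sufficiently large $n$, and then derive a contradiction with the fact that $x\in X$ has infinite measure / lives on $\RR_+$ — or more precisely, with the way spacers are added at each step.

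First I would record the crucial fact about the maps $p_n$ implicit in the construction: the top level of tower $n+1$ (level $h_{n+1}-1$) is an $(n+1)$-spacer, hence completely outside tower $n$, so $p_n(h_{n+1}-1) = \ast$. Contrapositively, if $j_n(x) = p_n(j_{n+1}(x)) \neq \ast$ — which holds here since $j_n(x) = h_n-1 \in \{0,\ldots,h_n-1\}$ for all large $n$ once $x$ has left the $\ast$-regime — then $j_{n+1}(x) \neq h_{n+1}-1$. Combined with property~\eqref{eq:prop_pn}, which says $p_n$ maps a "non-top" level to a non-top level with $p_n(j+1) = p_n(j)+1$: if $j_{n+1}(x) \le h_{n+1}-2$ and $j_{n+1}(x)$ sits inside tower $n$ (i.e. $p_n(j_{n+1}(x))\neq\ast$), then applying~\eqref{eq:prop_pn} to $j = j_{n+1}(x)$ would give $p_n(j_{n+1}(x)+1) = j_n(x)+1$; so if $j_n(x) = h_n-1$ were the top level, we would need $j_n(x)+1 = h_n \notin \{0,\ldots,h_n-1\}$, which is impossible unless level $j_{n+1}(x)+1$ of tower $n+1$ is outside tower $n$. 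The point is that once $j_n(x) = h_n-1$, the preimage $j_{n+1}(x)$ under $p_n$ must itself be the level of tower $n+1$ sitting at the top of a copy of tower $n$; and by inspection of the three subcolumns in the construction (the top level of tower $n$ appears inside tower $n+1$ as the last level of the first subcolumn, or of the second/third subcolumn, all of which are followed either by a spacer or by another level), one sees $j_{n+1}(x)$ cannot be $h_{n+1}-1$ but must still satisfy $j_{n+1}(x) \le h_{n+1}-2$; yet we need to also know it is again $h_{n+1}-1$ to run the induction — this is where the contradiction bites, because iterating shows $j_m(x) \le h_m - 2$ eventually, contradicting the assumed infinitude of indices where $j_n(x) = h_n - 1$.

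Cleaner route: I would instead show directly that $\{n : j_n(x) = h_n-1\}$ cannot be infinite by observing that $j_n(x) = h_n - 1$ and $j_n(x) = p_n(j_{n+1}(x))$ with $p_n(h_{n+1}-1)=\ast$ forces $j_{n+1}(x) < h_{n+1}-1$; but \eqref{eq:prop_pn} applied with $j = j_{n+1}(x)$ is vacuous when $p_n(j_{n+1}(x)) = h_n-1$ is the top, so instead apply it to show $p_n(j_{n+1}(x)-1) = h_n-2$ provided $j_{n+1}(x)-1$ maps into tower $n$; chase this down to conclude $j_{n+1}(x)$ lies strictly between two levels that are \emph{not} the top of tower $n+1$. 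Tracking how the single top level of tower $n$ is positioned within tower $n+1$ (it is always immediately below a spacer, hence its "copies" in tower $n+1$ are at indices $< h_{n+1}-1$, and only finitely often can the chain $j_n = h_n-1, j_{n+1} = h_{n+1}-1, \ldots$ be sustained), we get that for some $\overline n$ and all $n \ge \overline n$, $j_n(x) \le h_n - 2$.

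The main obstacle I anticipate is the bookkeeping of exactly where the top level $h_n-1$ of tower $n$ sits among the levels of tower $n+1$, separately in the non-$n_\ell$ step (three subcolumns, one middle spacer) and in the $n = n_\ell$ step (extra blocks of $h_{n-k(\ell)}$ spacers above each subcolumn, plus one more). In both cases the key observation to extract is: \emph{the level $h_n-1$ of tower $n$, wherever it reappears inside tower $n+1$, is immediately followed (in tower $n+1$) by a spacer}, so its image levels in tower $n+1$ are never the very top level $h_{n+1}-1$, and moreover are always of the form "top of a sub-copy of tower $n$ followed by spacer"; since a sequence $x\in X$ has $j_{n_0}(x)\neq \ast$ for some $n_0$ but the top-of-tower levels get pushed down by at least one at each non-$n_\ell$ step after entering the tower structure, the set of $n$ with $j_n(x)=h_n-1$ must be finite. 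I would write this as a short contrapositive argument once the positional fact is pinned down.
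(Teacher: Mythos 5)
There is a genuine gap: the positional fact your whole argument rests on is false at the ordinary (non-$n_\ell$) steps of the construction. You assert that the top level of tower $n+1$ is always an $(n+1)$-spacer, i.e.\ that $p_n(h_{n+1}-1)=\ast$, and later that ``the level $h_n-1$ of tower $n$, wherever it reappears inside tower $n+1$, is immediately followed by a spacer''. When $n\notin\{n_\ell:\ell\ge1\}$ the step is the classical Chacon step: the single spacer sits above the \emph{middle} subcolumn only, so tower $n+1$ reads [subcolumn 1][subcolumn 2][spacer][subcolumn 3]; the top of the first subcolumn is followed by the bottom of the second (no spacer), and the top of the third subcolumn \emph{is} the top of tower $n+1$, so that $p_n(h_{n+1}-1)=h_n-1\neq\ast$. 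Hence at such a step $j_n(x)=h_n-1$ does not force $j_{n+1}(x)<h_{n+1}-1$, and your claim that ``the top-of-tower levels get pushed down by at least one at each non-$n_\ell$ step'' is exactly backwards. Indeed, for the all-ordinary-steps (classical Chacon) symbolic model the compatible sequence $j_n=h_n-1$ for all $n$ is a legitimate point of $X$, so the lemma would be false there; no argument that treats the two kinds of steps symmetrically can succeed.

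The correct proof isolates the special steps: when $n=n_\ell$, spacers are added above \emph{all three} subcolumns, in particular above the last one, so the top level of tower $n_\ell+1$ is a spacer and $j_{n_\ell+1}(x)=h_{n_\ell+1}-1$ would force $j_{n_\ell}(x)=\ast$. Since $x\in X$ has $j_{n_\ell}(x)\neq\ast$ for all large $\ell$, one gets $j_{n_\ell+1}(x)\le h_{n_\ell+1}-2$ for such an $\ell$, and then \eqref{eq:prop_pn}, applied with $j=j_{m+1}(x)$ and using $p_m(j_{m+1}(x))=j_m(x)\le h_m-2$, propagates the bound $j_m(x)\le h_m-2$ to every $m\ge n_\ell+1$ by induction. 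You do have this propagation mechanism in hand (your uses of \eqref{eq:prop_pn} are in the right spirit, though you repeatedly try to apply it at the top level, where its hypothesis fails), but you never pin down the one place --- the steps $n=n_\ell$ --- where the ``top is a spacer'' fact actually holds, and the contradiction you aim for does not materialize at the other steps.
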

\begin{proof}
  Remember that at each step $n_{\ell}+1$, some spacers are added on the last 
subcolumn of tower~$n_\ell$. 
  Hence, $j_{n_\ell+1}=h_{n_\ell+1}-1$ implies $j_{n_\ell}= \ast$. Now take 
$\ell$ large enough so that $j_{n_\ell}\neq \ast$.
  Then $j_{n_\ell+1}<h_{n_\ell+1}-1$, and~\eqref{eq:prop_pn} shows by an 
immediate induction that $j_m < h_m-1$ for each $m\ge n_\ell+1$.
\end{proof}

We define the measurable transformation $T:X\to X$ as follows: for 
$x=(j_n)_{n\ge0}\in X$, we consider the smallest integer $\overline{n}$ 
satisfying the property given in Lemma~\ref{lemma:defT}.
Then we set $T(x)\egdef (j'_n)_{n\ge0}$, where $j'_n\egdef j_n+1$ if 
$n\ge\overline{n}$, and the finite sequence 
$(j'_1,j'_2,\ldots,j'_{\overline{n}-1})$ is determined by the value of 
$j'_{\overline{n}}$ and the compatibility conditions $j'_n=p_n(j'_{n+1})$, $1\le 
n<\overline{n}$. 
Note that $T$ is one-to-one, and $T(X)=X\setminus\{\bz\}$, where 
$\bz\egdef(0,0,\ldots)$. 

For each $n\ge1$ and each $0\le j<h_n-1$, $T(L_n^j)=L_n^{j+1}$, hence 
$(L_n^0,\ldots,L_n^{h_n-1})$ is a Rokhlin tower for $T$. By construction, the 
family of Rokhlin towers we get in this way 
has the same structure as the family of Rokhlin towers we constructed by 
cutting-and-stacking on $\RR_+$. 
From now on, ``tower~$n$" will rather designate the Rokhlin tower 
$(L_n^0,\ldots,L_n^{h_n-1})$ in $X$. 
The main advantage that we get compared to the construction on $\RR_+$ is the 
following elementary fact.
\begin{remark}
  \label{rem:nonemptyintersection}
  If $\left(L_n^{j_n}\right)_{n\ge \overline{n}}$ is a sequence of levels in the 
successive Rokhlin towers, such that $L_{n+1}^{j_{n+1}}$ is always included in 
$L_n^{j_n}$ (equivalently, $j_n=p_n(j_{n+1})$), then 
$\bigcap_n L_n^{j_n}$ is always a singleton
\end{remark}
 (Note that such an intersection can be empty in the construction on $\RR_+$.)

Let $\mu$ be the pushforward of the Lebesgue measure on $\RR_+$ by $\varphi$. 
Then $\mu$ is an infinite, $\sigma$-finite and $T$-invariant measure on $X$, and 
it satisfies 
\[
  \forall n\ge0,\ \forall j\in\{0,\ldots,h_n-1\},\ \mu(L_n^j)=3^{-n}.
\]

\subsubsection*{Additional notations}
For each $n\ge0$, we denote by $C_n$ the subset of $X$ formed by the union of 
all the levels of tower~$n$. Note that for each $n\ge0$,
$C_n\subset C_{n+1}$, and that $X=\bigcup_{n\ge0}C_n$.
For $x\in C_n$, note that $j_n(x)$ indicates the level of tower $n$ to which $x$ 
belongs.


We also define a function $t_n$ on $C_n$, taking values in $\{1,2,3\}$, which 
indicates for each point whether it belongs to the first, the second, or the 
third subcolumn of tower~$n$. We thus have for $x\in C_n$ and $n\notin\{n_\ell:\ell\ge1\}$
\begin{equation} \label{eq:induction_for_j_n}
  j_{n+1}(x) = \begin{cases}
                 j_n(x) & \text{ if }t_n(x)=1,\\
                 j_n(x)+h_n & \text{ if }t_n(x)=2,\\
                 j_n(x)+2h_n+1 & \text{ if }t_n(x)=3.\\
               \end{cases}
\end{equation}
In the case where $n=n_\ell$ for some $\ell\ge1$, we have to replace $h_n$ by $h_{n_\ell}+h_{n_\ell-k(\ell)}$ 
in the above formula.
In particular, we always have
\begin{equation}
  \label{eq:s_n}
  j_{n+1}(x) \ge j_n(x).
\end{equation}

Consider two integers $0\le n < m$. By construction, tower $n$ is subdivised 
into $3^{m-n}$ subcolumns which appear as 
bundles of $h_n$ consecutive levels in tower $m$: we call them \emph{occurrences 
of tower $n$ inside tower $m$}. These occurrences are naturally ordered, from 
bottom to 
top of tower $m$. For a point $x$ in tower $n$, the precise occurrence of tower 
$n$ inside tower $m$ to which $x$ belongs is determined by the sequence 
$t_n(x),t_{n+1}(x),\ldots,t_{m-1}(x)$.
For example, $x$ belongs to the last occurrence of tower $n$ inside tower $m$ if 
and only if $t_n(x)=t_{n+1}(x)=\cdots=t_{m-1}(x)=3$.

\begin{remark}
\label{remark:01spacer}
  Observe that for each $\ell\ge2$ and each $n_{(\ell-1)}+1\le n\le n_\ell - 1$, 
there is 0 or 1 spacer between two consecutive occurrences of tower $n$ inside 
tower $n_\ell$.
\end{remark}

\subsection{Behaviour of $\mu$-typical points}

\label{sec:typical}

\begin{lemma}
  \label{lemma:Xinfty}
 There exists a $\mu$-conull subset $X_\infty$ of $X$ satisfying: for each $x\in 
X_\infty$, there exists an integer $\ell(x)$ such that, for all $\ell\ge 
\ell(x)$, for each 
 $n_{(\ell-1)} \le n \le n_\ell -\ell$, $x\in C_n$ but $x$ is neither in the 
first hundred nor in the last hundred occurrences of tower $n$ inside tower $n_\ell$.
\end{lemma}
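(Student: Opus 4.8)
The plan is to build $X_\infty$ as a set of full measure by a Borel--Cantelli argument applied to the ``bad'' events. Fix $\ell\ge 2$ and consider a point $x$ that lies in $C_{n_{(\ell-1)}}$. For each $n$ with $n_{(\ell-1)}\le n\le n_\ell-\ell$, the occurrence of tower $n$ inside tower $n_\ell$ to which $x$ belongs is coded by the finite word $t_n(x),t_{n+1}(x),\dots,t_{n_\ell-1}(x)$ over the alphabet $\{1,2,3\}$ (with the modification at the step $n=n_\ell$, but here $n\le n_\ell-\ell<n_\ell$ so $t_{n_\ell}$ is not involved). The key point is that being in one of the first hundred or last hundred occurrences forces this word to begin with a bounded-length prefix of a restricted type: roughly, the occurrence index among the $3^{n_\ell-n}$ occurrences is determined by reading the ternary-like expansion given by the $t_m$'s, and to land in the bottom hundred (resp. top hundred) the first few symbols $t_n(x),t_{n+1}(x),\dots$ must all be $1$ (resp.\ all be $3$), except for the finitely many small corrections coming from the extra spacers; in any case at most $C$ of the leading symbols are constrained, where $C$ is an absolute constant (something like $\log_3 100$ rounded up, say $C=5$). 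Using $\mu(L_n^j)=3^{-n}$ and the structure of the subcolumns, the $t_m$ values of a $\mu$-typical point in $C_n$ behave like i.i.d.\ (or nearly so) on $\{1,2,3\}$, so the $\mu$-probability (conditioned on $x\in C_n$) that $x$ lies in the first or last hundred occurrences of tower $n$ inside tower $n_\ell$ is at most $2\cdot 3^{-(n_\ell-n-C)} = O(3^{-(n_\ell-n)})$.

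Next I would sum these estimates. For a fixed $\ell$, the bad event $B_\ell$ is ``$x\in C_{n_{(\ell-1)}}$ and for some $n$ with $n_{(\ell-1)}\le n\le n_\ell-\ell$, $x$ is in the first or last hundred occurrences of tower $n$ inside tower $n_\ell$''. By the union bound and the geometric estimate above,
\[
  \mu\bigl(B_\ell \cap C_{n_{(\ell-1)}}\bigr) \le \sum_{n=n_{(\ell-1)}}^{n_\ell-\ell} O\!\left(3^{-(n_\ell-n)}\right) = O\!\left(3^{-\ell}\right).
\]
Since $\sum_\ell 3^{-\ell}<\infty$, the Borel--Cantelli lemma (valid here because we are measuring subsets of the fixed finite-measure set $C_{n_{(\ell-1)}}$, or equivalently because these sets grow to $X$ and the tail sum is finite) gives $\mu(\limsup_\ell B_\ell)=0$. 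Set $X_\infty \egdef X \setminus \limsup_\ell B_\ell$, intersected with the conull set of points $x$ that eventually belong to every $C_n$ (which is automatic since $X=\bigcup_n C_n$ and the $C_n$ increase). Then $X_\infty$ is $\mu$-conull, and for $x\in X_\infty$ there is $\ell(x)$ such that $x\notin B_\ell$ for all $\ell\ge\ell(x)$ — which is exactly the stated conclusion, after possibly enlarging $\ell(x)$ so that also $x\in C_{n_{(\ell-1)}}$ for all $\ell\ge\ell(x)$.

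The main obstacle I anticipate is making precise the claim that landing in the first or last hundred occurrences constrains only a bounded-length prefix of the word $t_n(x),\dots,t_{n_\ell-1}(x)$, uniformly in $\ell$ and $n$. One has to check that the extra spacers inserted at the special steps $n_j$ (which shift the indexing of occurrences) do not accumulate: by Remark~\ref{remark:01spacer} there are only $0$ or $1$ spacers between consecutive occurrences in the relevant range, so the displacement of occurrence indices caused by spacers between levels $n$ and $n_\ell$ is controlled, and the ``first hundred / last hundred'' condition still pins down only $O(1)$ leading ternary digits. A secondary technical point is the precise sense in which the $t_m$'s are independent under $\mu$ conditioned on $C_n$; one does not need exact independence, only the one-sided bound $\mu(t_n=\epsilon_n,\dots,t_{n+r}=\epsilon_{n+r}\mid C_n)\le c\,3^{-r}$ for an absolute constant $c$, which follows directly from the cut-into-three structure and $\mu(L_n^j)=3^{-n}$. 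With these two points settled, the rest is the routine Borel--Cantelli computation sketched above.
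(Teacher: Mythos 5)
Your overall strategy --- code the occurrence of tower $n$ inside tower $n_\ell$ by the word $t_n(x),\ldots,t_{n_\ell-1}(x)$, bound the probability of landing in the first or last hundred occurrences, and conclude by Borel--Cantelli --- is the paper's. Two of your worries are non-issues, and one step has a real gap. On the non-issues: the occurrences of tower $n$ inside tower $n_\ell$ are exactly the $3^{n_\ell-n}$ subcolumns indexed by $(t_n,\ldots,t_{n_\ell-1})$; the spacers sit \emph{between} occurrences and do not perturb this indexing at all, and since each tower is cut into three subcolumns of equal measure, the $t_m$'s are \emph{exactly} i.i.d.\ uniform on $\{1,2,3\}$ under the normalized measure on $C_n$. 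So no approximate-independence estimate and no spacer correction are needed. (Also, the constraint forced by ``first hundred'' is $t_m(x)=1$ for all $m\ge n+5$, i.e.\ all but the five \emph{least} significant digits are pinned --- your prose has the significance order backwards --- but your displayed bound $2\cdot 3^{-(n_\ell-n-C)}$ is the correct one.) Note also that the paper avoids your union bound over $n$ entirely: it controls only the single event for $n=n_\ell-\ell$ and then observes that for every $n\le n_\ell-\ell$ the first (resp.\ last) hundred occurrences of tower $n$ inside tower $n_\ell$ are contained in the first (resp.\ last) hundred occurrences of tower $(n_\ell-\ell)$ inside tower $n_\ell$. Your geometric summation over $n$ also gives $O(3^{-\ell})$, so this is only a difference in bookkeeping.

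The genuine gap is the Borel--Cantelli step. Your $O(3^{-(n_\ell-n)})$ is a \emph{conditional} probability given $C_n$; the actual measure of the bad set is $\mu(C_n)\cdot O(3^{-(n_\ell-n)})$, and $\mu(C_{n_{(\ell-1)}})\to\infty$ with $\ell$, so your justification ``we are measuring subsets of the fixed finite-measure set $C_{n_{(\ell-1)}}$'' is false as stated, and $\sum_\ell\mu(B_\ell)<\infty$ is not immediate. (It does hold --- one checks from the construction that $\mu(C_{n_\ell})=h_{n_\ell}3^{-n_\ell}$ at most doubles at each special step, hence grows no faster than $2^\ell$, and $(2/3)^\ell$ is summable --- but that requires an argument you do not give.) The clean fix, which is what the paper does, is to localize first: fix $n_0$, apply Borel--Cantelli inside the finite measure space $C_{n_0}$ with the normalized measure, where the conditional probability of $B_\ell$ is $O(3^{-\ell})$ for all large $\ell$, obtaining a conull subset $X_{n_0}\subset C_{n_0}$; then set $X_\infty\egdef X\setminus\bigcup_{n_0}\left(C_{n_0}\setminus X_{n_0}\right)$, removing a countable union of null sets from $X=\bigcup_{n_0}C_{n_0}$. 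With that adjustment your proof is complete.
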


\begin{proof}
If we consider $x$ as a random point chosen according to the normalized 
$\mu$-measure on $C_n$, then the random variables 
$t_n(x),t_{n+1}(x),\ldots,t_{m-1}(x)$ are i.i.d. and uniformly distributed in 
$\{1,2,3\}$. Hence the probability that $x$ belongs to some specified occurrence 
of tower $n$ inside tower $m$ is $1/3^{m-n}$. 

Since the series $\sum 1/3^\ell$ converges, by Borel Cantelli there exists a 
subset $X_n$ of full $\mu$-measure inside $C_n$ such that, for each $x\in X_n$, 
there is only a finite number of integers $\ell$ such that $x$ belongs to the 
first hundred or to the last hundred occurrences of tower $(n_\ell-\ell)$ inside tower $n_\ell$. 

Setting 
\[
  X_\infty \egdef X\setminus \bigcup_n \left(C_n\setminus X_n\right),
\]
we get a conull subset of $X$, and for each $x\in X_\infty$, there exists an 
integer $\ell(x)$ such that, for all $\ell\ge \ell(x)$, $x\in 
C_{n_{(\ell-1)}}\subset C_{n_\ell-\ell}$, but $x$ is neither in the first hundred nor 
in the last hundred occurrence of tower $(n_\ell-\ell)$ inside tower $n_\ell$. If 
$n_{(\ell-1)} \le n \le n_\ell -\ell$, the first (respectively last) hundred occurrences 
of tower $n$ inside tower $n_\ell$ are included in the first (respectively last) hundred
occurrences of tower $(n_\ell-\ell)$ inside tower $n_\ell$, and this concludes 
the proof.
\end{proof}

\begin{remark} 
\label{remark:first_level}
  In particular, for each $x\in X_\infty$, if $n>n_{\ell(x)}$, then $x$ does not 
belong to the first level of tower~$n$. And since $\bz$ is in the first level of tower~$n$ for each $n$,
we have $\bz\notin X_\infty$.
\end{remark}

\begin{remark}
\label{remark:1.6}
  As $n_{(\ell-1)}+\ell < n_\ell-\ell$ by~\eqref{eq:condition_nl},  we may also assume that  for each $x\in X_\infty$
  and each $\ell\ge \ell(x)$, $x$ is neither in the first hundred nor in the last hundred occurrences of tower $n_{(\ell-1)}$ inside tower $n_\ell-\ell$.
\end{remark}

\section{Invariant Radon measures for Cartesian powers of the nearly finite Chacon transformation}
\label{sec:invariantmeasures}
We fix a natural integer $d\ge1$, and we study the action of the Cartesian power 
$\td$ on $X^d$. Recall that a measure $\sigma$ on $X^d$ is a \emph{Radon 
measure} if it is finite on each compact subset of $X^d$ (equivalently, if 
$\sigma(\cd_n)<\infty$ for each $n$). In particular, a Radon measure on $X^d$ is 
$\sigma$-finite (but the converse is not true).

Our purpose is to describe all Radon measures on $X^d$ which are $\td$-invariant 
and whose marginals are absolutely continuous with respect to $\mu$. 

\subsection{Basic facts about Radon measures on $X^d$}
\label{sec:basicradon}
We call \emph{$n$-box} a subset of $\cd_n$ which is a Cartesian product 
$L_n^{j_1}\times \cdots\times L_n^{j_d}$, where each $L_n^{j_i}$ is a level of 
the Rokhlin tower~$C_n$.
A \emph{box} is a subset which is an $n$-box for some $n\ge0$.
The family of all  boxes  form a basis of compact clopen sets of the topology of 
$X^d$. 

We consider the following ring of subsets of $X^d$
\[
    \mathscr{R}\egdef\{B\subset X^d:\ \exists n\ge0,\ B\text{ is a finite union 
of $n$-boxes}\}.
\]

\begin{prop}
\label{prop:caratheodory-extension}
  Any finitely additive functional $\sigma:\mathscr{R}\to\RR_+$ can be extended 
to a unique measure on the Borel $\sigma$-algebra $\B(X^d)$, which is Radon.
\end{prop}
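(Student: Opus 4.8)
The plan is to apply the Carathéodory extension theorem, so the work reduces to verifying that the finitely additive functional $\sigma$ on the ring $\mathscr{R}$ is in fact countably additive (premeasure) on $\mathscr{R}$; uniqueness and the Radon property will then follow almost for free. First I would recall why $\mathscr{R}$ is genuinely a ring: each $n$-box is clopen and compact, an $n$-box is a finite disjoint union of $(n{+}1)$-boxes (using the maps $p_n$ to see how levels of tower $n$ split inside tower $n{+}1$), so any two elements of $\mathscr{R}$ can be refined to finite unions of $m$-boxes for a common large $m$; hence finite unions, intersections and differences stay in $\mathscr{R}$. Since every element of $\mathscr{R}$ is compact, the crucial topological input is available.

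The heart of the proof is countable additivity. Suppose $B=\bigsqcup_{k\ge1}B_k$ with $B,B_k\in\mathscr{R}$ pairwise disjoint. Finite additivity and monotonicity give $\sigma(B)\ge\sum_{k=1}^N\sigma(B_k)$ for every $N$, hence $\sigma(B)\ge\sum_k\sigma(B_k)$. For the reverse inequality I would use compactness: each $B_k$ is a clopen (hence open) subset of $X^d$, and $B$ is compact, so from the open cover $\{B_k\}_{k\ge1}$ of $B$ one extracts a finite subcover; by disjointness this forces $B=\bigsqcup_{k=1}^{N}B_k$ for some finite $N$, so the sum is actually finite and $\sigma(B)=\sum_{k=1}^N\sigma(B_k)=\sum_k\sigma(B_k)$ trivially. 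Thus $\sigma$ is a premeasure on $\mathscr{R}$, and Carathéodory's extension theorem yields a measure $\overline{\sigma}$ on the $\sigma$-algebra generated by $\mathscr{R}$. Since the boxes form a basis of the topology of $X^d$ and $X^d$ is second countable, every open set is a countable union of boxes, so $\sigma(\mathscr{R})$ generates $\B(X^d)$; hence $\overline{\sigma}$ is defined on all Borel sets.

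For uniqueness, I would invoke the standard fact that a measure on $\B(X^d)$ is determined by its values on a generating $\pi$-system provided the space is exhausted by a countable increasing sequence of sets of finite mass in that $\pi$-system. Here $\mathscr{R}$ is closed under finite intersection, $X^d=\bigcup_n\cd_n$ with each $\cd_n\in\mathscr{R}$, and $\sigma(\cd_n)<\infty$ by hypothesis (the functional takes values in $\RR_+$), so any two Borel extensions agreeing on $\mathscr{R}$ coincide. Finally, the extension is Radon: every compact subset of $X^d$ is contained in some $\cd_n$ (because the $\cd_n$ are clopen and increase to $X^d$, so they form an open cover of any compact set and one of them already contains it), and $\overline{\sigma}(\cd_n)=\sigma(\cd_n)<\infty$. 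The only point requiring genuine care—and the one I would expect to be the main obstacle if the topology were less tame—is the compactness argument collapsing a countable disjoint union in $\mathscr{R}$ to a finite one; here it is immediate precisely because elements of $\mathscr{R}$ are simultaneously compact and open, which is the whole reason the Cantor-like model $X$ was introduced.
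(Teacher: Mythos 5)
Your proof is correct and takes essentially the same route as the paper: both reduce the statement to Carathéodory's extension theorem and verify the countable-additivity hypothesis using the fact that elements of $\mathscr{R}$ are compact and clopen (the paper phrases this via decreasing sequences with non-vanishing measure having nonempty intersection, you via a countable disjoint union in $\mathscr{R}$ collapsing to a finite one by extracting a finite subcover — two standard equivalent formulations of the same compactness input). Your explicit verification of uniqueness and of the Radon property, which the paper leaves implicit, is also correct.
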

\begin{proof}
  Using Theorems~F p.~39 and A p.~54 (Caratheodory's extension theorem) 
in~\cite{Halmos}, we only have to prove that, if $(R_k)_{k\ge 1}$ is a 
decreasing sequence in $\mathscr{R}$ such that 
  $  \lim_{k\to\infty}\downarrow \sigma(R_k) > 0$,
  then $\bigcap_k R_k\neq \emptyset$.
  But this is obvious since, under this assumption, each $R_k$ is a compact 
nonempty set.
\end{proof}

In particular, to define a Radon measure $\sigma$ on $X^d$, we only have to 
define $\sigma(B)$ for each box $B$, with the compatibility condition that,
if $B$ is an $n$-box for some $n\ge0$, then $\sigma(B)=\sum_{B'\subset B} 
\sigma(B')$, where the sum ranges over the $3^d$ $(n+1)$-boxes which are 
contained in $B$.

We call \emph{$n$-diagonal} a Rokhlin tower for $\td$ which is of the form 
\[ \bigl(B, \td(B), \ldots, (\td)^{r-1}(B)\bigr),\] 
where each $(\td)^j(B)$ is an $n$-box, and which is maximal in the following 
sense: $B$ has one projection which is the bottom level $L_n^0$ of tower~$n$, 
$(\td)^{r-1}B$ has one projection 
which is the top level $L_n^{h_n-1}$ of tower~$n$, and the projections of each 
$(\td)^jB$, $1\le j\le r-2$ are neither the bottom level nor the top level of 
tower~$n$. (See Figure~\ref{fig:diago})


\begin{figure}[htp]
  \centering
  \includegraphics[width=11cm]{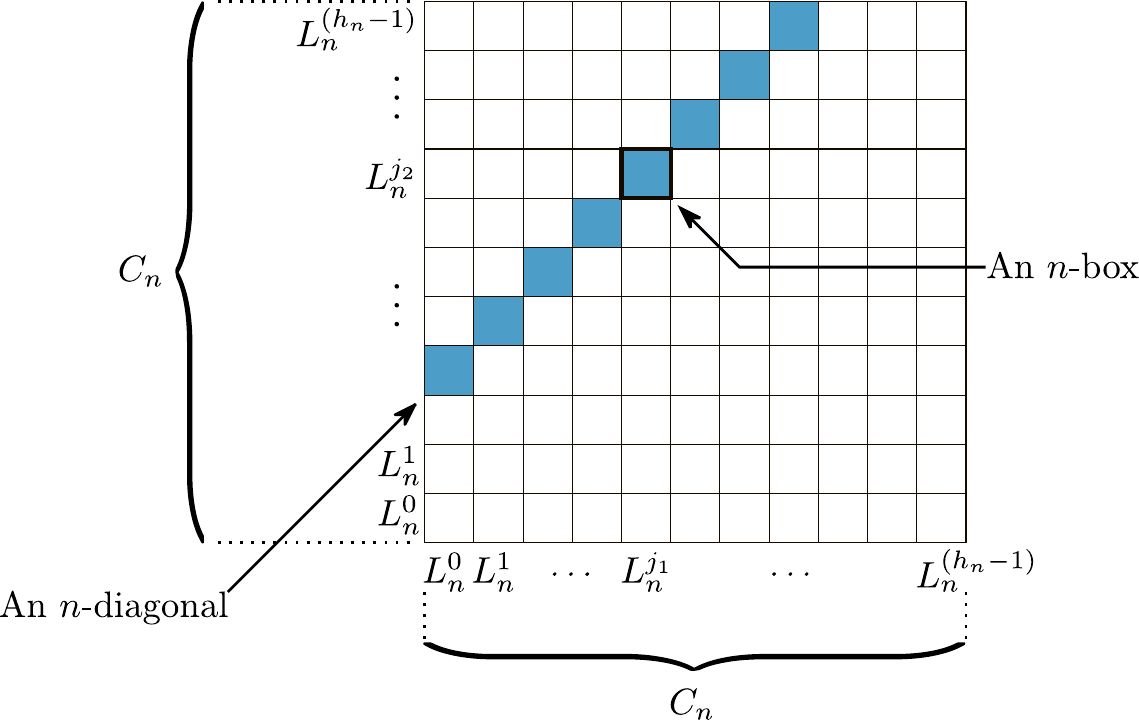}
  \caption{An $n$-diagonal inside $C_n^d$ (here $d=2$)}
  \label{fig:diago}
\end{figure}

\begin{lemma}
 \label{lemma:diagonal}
 Set $X_{\bz}^d\egdef\{x=(x_1,\ldots,x_d)\in X^d: \exists i, x_i=\bz\}$. 
 Let $\sigma$ be a Radon measure on $X^d$. Then $\sigma$ is $\td$-invariant if 
and only if the following two conditions hold:
 \begin{enumerate}
   \item $\sigma\left(X_{\bz}^d\right)=0$.
   \item for each $n$, all the $n$-boxes lying on an $n$-diagonal always have the same 
measure.
 \end{enumerate}
\end{lemma}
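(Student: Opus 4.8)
The plan is to prove both implications directly from the characterization of Radon measures via their values on boxes (Proposition~\ref{prop:caratheodory-extension}) and the explicit cutting-and-stacking structure. For the ``only if'' direction, suppose $\sigma$ is $\td$-invariant. Condition~(1) follows because $\bigcap_n C_n^c \times X^{d-1}$-type reasoning is not needed; instead observe that $\{x_i = \bz\}$ is, up to a set already of measure zero, the decreasing intersection over $n$ of the bottom level $L_n^0$ in coordinate $i$. Since $T(L_n^0) = L_n^1$, invariance gives $\sigma\bigl(\text{$i$-th coordinate in }L_n^0\bigr) = \sigma\bigl(\text{$i$-th coordinate in }L_n^1\bigr)$, and iterating, all $h_n$ levels of tower~$n$ in coordinate~$i$ carry equal $\sigma$-mass; but their union is $C_n$, a set of finite $\sigma$-measure (Radon!), so each has mass $\le \sigma(C_n^{\text{coord }i})/h_n \to 0$ as $n\to\infty$ by Lemma~\ref{lemma:defT}-type growth of $h_n$. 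Hence $\sigma(\{x_i=\bz\})=0$, and summing over $i$ gives~(1). Condition~(2) is immediate: if $B$ and $\td(B)$ are consecutive $n$-boxes on an $n$-diagonal, then $\td(B)$ is literally the image of $B$ under $\td$, so $\td$-invariance forces $\sigma(B) = \sigma(\td B)$, and walking along the whole $n$-diagonal equates all of them.

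For the ``if'' direction, assume (1) and (2); I want to show $\sigma \circ (\td)^{-1} = \sigma$. By Proposition~\ref{prop:caratheodory-extension} it suffices to check $\sigma\bigl((\td)^{-1}(B)\bigr) = \sigma(B)$ for every box~$B$. Fix an $n$-box $B = L_n^{j_1}\times\cdots\times L_n^{j_d}$. The key point is to understand $(\td)^{-1}(B)$. In each coordinate~$i$, $T^{-1}(L_n^{j_i}) = L_n^{j_i-1}$ if $j_i \ge 1$, while $T^{-1}(L_n^0)$ is not a level of tower~$n$ — it is the image under $T^{-1}$ of the bottom level, which lands partly outside $C_n$ and must be resolved by passing to a finer tower. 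So one argues by cases. If every $j_i \ge 1$, then $(\td)^{-1}(B) = L_n^{j_1-1}\times\cdots\times L_n^{j_d-1}$ is again an $n$-box; moreover $B$ and $(\td)^{-1}(B)$ lie on a common $n$-diagonal (they are $\td$-consecutive boxes, both with all projections in the interior range $\{1,\ldots,h_n-1\}$ resp.\ $\{0,\ldots,h_n-2\}$), so condition~(2) gives $\sigma((\td)^{-1}B) = \sigma(B)$ directly. The delicate case is when some $j_i = 0$: then I refine. Write $B$ as a disjoint union of $(n+m)$-boxes for $m$ large, and track where $(\td)^{-1}$ sends each of them — by Lemma~\ref{lemma:defT} and the compatibility map $p_n$, for $\mu$-a.e.\ (hence, since we will have already used~(1), for $\sigma$-a.e.) point, a sufficiently fine box has all of its $\td$-preimage's projections avoiding the bottom level of the corresponding tower, reducing to the interior case above.

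The cleanest way to organize the refinement is: let $B$ be any $n$-box, and decompose $(\td)^{-1}(B) = \bigsqcup_{k} B_k'$ into maximal pieces, where each $B_k'$ is contained in a single $(n+m_k)$-box $\widetilde B_k$ with the property that $\td(\widetilde B_k)$ is also an $(n+m_k)$-box contained in $B$ and $\widetilde B_k, \td(\widetilde B_k)$ lie on a common $(n+m_k)$-diagonal; the only leftover is a piece contained in $\bigcup_i \{x_i = \bz\}$-neighborhoods, which condition~(1) kills. Applying~(2) at level $n+m_k$ to each pair and then the additivity/compatibility relation of Proposition~\ref{prop:caratheodory-extension} to reassemble, one gets $\sigma\bigl((\td)^{-1}B\bigr) = \sum_k \sigma(\widetilde B_k) = \sum_k \sigma(\td\widetilde B_k) = \sigma(B)$.

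\textbf{Main obstacle.} The genuine work is the bookkeeping in the ``if'' direction when some projection of $B$ is the bottom level: one must verify that $(\td)^{-1}$ of a fine enough subdivision really does decompose into $\td$-translates of boxes lying on diagonals, plus a negligible remainder controlled by~(1). This uses the precise structure of the spacers (how the top of each subcolumn maps across, equations~\eqref{eq:induction_for_j_n}--\eqref{eq:s_n} and property~\eqref{eq:prop_pn}) together with the fact that $h_n\to\infty$ so that the ``bad'' bottom-level part shrinks. Everything else — the two implications' forward direction, the reassembly via Carathéodory compatibility — is routine once this combinatorial picture is in place.
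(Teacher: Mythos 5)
Your handling of condition (2) in the forward direction and of the converse is essentially the paper's argument. For the converse, the paper also reduces to $n$-boxes all of whose projections avoid the bottom level (its sets $\ovC_m$), observes that $\bigcup_m\ovC_m=X^d\setminus X_{\bz}^d$, and uses condition (1) to dispose of the remainder; your per-box refinement is the same idea organized locally, and it goes through, though the phrase ``a piece contained in $\bigcup_i\{x_i=\bz\}$-neighborhoods, which condition (1) kills'' should be made precise as the increasing union $B\setminus X_{\bz}^d=\bigcup_m(B\cap\ovC_m)$, whose $\td$-preimage exhausts $(\td)^{-1}(B)$ because $X_{\bz}^d$ has empty preimage --- neighbourhoods of a null set are not null, and a decreasing-intersection argument would require finiteness you have not established.

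The genuine gap is in your proof of condition (1) in the forward direction. You equate the $\sigma$-masses of the cylinder sets $\{x: x_i\in L_n^j\}$, $j=0,\ldots,h_n-1$, and then divide by $h_n$, asserting that their union has finite measure ``(Radon!)''. But that union is $\{x: x_i\in C_n\}$, i.e.\ $C_n$ in coordinate $i$ and all of $X$ in the remaining $d-1$ coordinates; this set is not compact, and a Radon measure on $X^d$ typically gives it infinite mass. For instance, with $\sigma=\mu^{\otimes d}$ every set $\{x: x_i\in L_n^j\}$ has measure $3^{-n}\cdot\mu(X)^{d-1}=\infty$, so ``equal masses with finite sum'' fails and no bound of the form $\sigma(\cdot)/h_n\to0$ is available; Radonness only controls $\cd_n$, not these marginal cylinders. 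The correct --- and much shorter --- argument is the one the paper uses: since $T(X)=X\setminus\{\bz\}$, the point $\bz$ has no $T$-preimage, hence $(\td)^{-1}\left(X_{\bz}^d\right)=\emptyset$, and invariance gives $\sigma\left(X_{\bz}^d\right)=\sigma(\emptyset)=0$ at once. With that substitution your proof is complete.
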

\begin{proof}
  Assume first that $\sigma$ is a $\td$-invariant Radon measure on $X^d$. 
Recalling that $\bz$ has no preimage by $T$, we see that 
$(\td)^{-1}(X_{\bz}^d)=\emptyset$, 
  whence $\sigma\left(X_{\bz}^d\right)=0$. Moreover, since $n$-boxes on an 
$n$-diagonal are levels of a $\td$-Rokhlin tower, the second condition obviously 
holds. 
  Reciprocally, assume that the two conditions given in the statement of the 
lemma hold. For each $n$, let 
  $\ovC_n$ be the subset of $\cd_n$ constituted of all $n$-boxes of the form $L_n^{j_1}\times\cdots\times L_n^{j_d}$, where for each $i$ $j_i\neq 0$. 
  Then the second condition implies that 
  $\sigma$ and $(\td)_*(\sigma)$ coincide on $\ovC_n$  for each $n$. 
But
  \[
    \bigcup_{n\ge0} \ovC_n = X\setminus X_{\bz}^d.
  \]
  On the other hand, we have 
$(\td)_*(\sigma)(X_{\bz}^d)=\sigma\bigl((\td)^{-1}
(X_{\bz}^d)\bigr)=\sigma(\emptyset)=0$. With the first condition we see that 
  $\sigma$ and $(\td)_*(\sigma)$ also coincide on $X_{\bz}^d$, hence they are equal.
\end{proof}

\begin{definition}[Convergence of Radon measures on $X^d$]
\label{def:convergence}
  We say that a sequence $(\sigma_k)$ of Radon measures on $X^d$ converges to 
the nonzero Radon measure $\sigma$ if, 
  for each $n$ large enough so that $\sigma(\cd_n)>0$, we have
  \begin{itemize}
    \item $\sigma_k(\cd_n)>0$ for all large enough $k$,
    \item for each $n$-box $B$,  
    \[
      \frac{\sigma_k(B)}{\sigma_k(\cd_n)} \tend{k}{\infty} 
\frac{\sigma(B)}{\sigma(\cd_n)}.
    \]
  \end{itemize}
\end{definition}

Observe that, when a sequence of Radon measures converges in the above sense, 
then its limit is unique up to a multiplicative positive constant. Observe also 
that the convergence 
is unchanged if we multiply each measure $\sigma_k$ by a positive real number 
(which may vary with $k$).


\begin{remark}
\label{rem:mn}
 If the sequence $(\sigma_k)$ of Radon measures on $X^d$ converges to the 
nonzero Radon measure $\sigma$, then for $n$ such that $\sigma(\cd_n)>0$ and for 
each $m\ge n$,
 for each $m$-box $B\subset\cd_n$, we also have  
    \[
      \frac{\sigma_k(B)}{\sigma_k(\cd_n)} \tend{k}{\infty} 
\frac{\sigma(B)}{\sigma(\cd_n)}.
    \]
 Consequently, the above holds also when $B\in\mathscr{R}$ is included in 
$\cd_n$.
\end{remark}

Indeed, as $\cd_n$ is a finite union of $m$-boxes, we have 
    \[
      \frac{\sigma_k(\cd_n)}{\sigma_k(\cd_m)} \tend{k}{\infty} 
\frac{\sigma(\cd_n)}{\sigma(\cd_m)}.
    \]
Then we can write, for an $m$-box $B\subset\cd_n$,
\[
 \frac{\sigma_k(B)}{\sigma_k(\cd_n)} = \frac{\sigma_k(B)}{\sigma_k(\cd_m)}  
\frac{\sigma_k(\cd_m)}{\sigma_k(\cd_n)}
 \tend{k}{\infty} \frac{\sigma(B)}{\sigma(\cd_m)}  
\frac{\sigma(\cd_m)}{\sigma(\cd_n)} = \frac{\sigma(B)}{\sigma(\cd_n)}.
\]

\begin{prop}
  \label{prop:compacity}
  Let $(\sigma_k)$ be a sequence of Radon measures on $X^d$, and assume that 
there exists some $\underline{n}\ge 0$ satisfying
  \begin{itemize}
   \item $\sigma_k(\cd_{\underline{n}})>0$ for
  all large enough $k$,
  \item for each $n>\underline{n}$, the sequence 
$\left(\sigma_k(\cd_n)/\sigma_k(\cd_{\underline{n}})\right)_{k}$ is bounded.
  \end{itemize}
Then there is a subsequence $(k_j)$ and a nonzero Radon measure $\sigma$ on 
$X^d$ such that 
  $(\sigma_{k_j})$ converges to $\sigma$.
\end{prop}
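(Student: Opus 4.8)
The statement to prove is Proposition~\ref{prop:compacity}, a sequential compactness result for Radon measures on $X^d$ under the given normalization and boundedness hypotheses.

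=== MY PROOF PLAN ===

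The plan is to extract a convergent subsequence by a diagonal argument over the countable collection of boxes. First I would normalize: replace each $\sigma_k$ (for $k$ large enough that $\sigma_k(\cd_{\underline n})>0$) by $\sigma_k/\sigma_k(\cd_{\underline n})$, which does not affect convergence in the sense of Definition~\ref{def:convergence}. After this normalization we have $\sigma_k(\cd_{\underline n})=1$, and by hypothesis, for every $n>\underline n$ the sequence $\bigl(\sigma_k(\cd_n)\bigr)_k$ is bounded, say by some constant $M_n$. Consequently, for every box $B$ contained in some $\cd_n$ (and every box is), the sequence $\bigl(\sigma_k(B)\bigr)_k$ is bounded (by $M_n$ if $B\subset\cd_n$, taking $M_{\underline n}=1$). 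Since the set of all boxes is countable, I can enumerate them as $B_1,B_2,\ldots$ and, by Bolzano--Weierstrass plus the usual diagonal extraction, find a single subsequence $(k_j)$ along which $\sigma_{k_j}(B_i)$ converges to a limit, call it $\lambda(B_i)$, for every $i$. In particular $\lambda(\cd_{\underline n})=1$, so the limit is not identically zero.

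Next I would check that $\lambda$, defined on boxes, satisfies the compatibility condition required to define a Radon measure via the remark following Proposition~\ref{prop:caratheodory-extension}: if $B$ is an $n$-box, then $B$ is the disjoint union of the $3^d$ $(n+1)$-boxes $B'$ contained in it, and $\sigma_k(B)=\sum_{B'\subset B}\sigma_k(B')$ for every $k$; passing to the limit along $(k_j)$ in this finite sum gives $\lambda(B)=\sum_{B'\subset B}\lambda(B')$. Hence $\lambda$ extends to a finitely additive functional on the ring $\mathscr{R}$ (a finite union of $n$-boxes has $\lambda$-value the sum of the values on the boxes, well-defined by the compatibility just checked and the fact that any two representations have a common refinement), and therefore, by Proposition~\ref{prop:caratheodory-extension}, to a unique Radon measure $\sigma$ on $\B(X^d)$. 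Since $\sigma(\cd_{\underline n})=\lambda(\cd_{\underline n})=1>0$, $\sigma$ is nonzero.

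Finally I would verify that $(\sigma_{k_j})$ converges to $\sigma$ in the sense of Definition~\ref{def:convergence}. Fix $n$ with $\sigma(\cd_n)>0$; then $\sigma_{k_j}(\cd_n)\to\sigma(\cd_n)>0$, so $\sigma_{k_j}(\cd_n)>0$ for all large $j$, giving the first bullet. For the second bullet, for any $n$-box $B$ we have $\sigma_{k_j}(B)\to\sigma(B)$ and $\sigma_{k_j}(\cd_n)\to\sigma(\cd_n)>0$, so the ratio $\sigma_{k_j}(B)/\sigma_{k_j}(\cd_n)$ converges to $\sigma(B)/\sigma(\cd_n)$. This completes the proof.

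I do not expect any genuine obstacle here: the argument is the standard countable diagonal extraction combined with the Carathéodory machinery already set up in Proposition~\ref{prop:caratheodory-extension}. The only mildly delicate point is bookkeeping — making sure every box lies in some $\cd_n$ so that boundedness of $\sigma_k(B)$ is available (true since $X^d=\bigcup_n\cd_n^{\,}$ and the $\cd_n$ increase, so any fixed box is eventually contained in one), and checking that the limiting functional is well-defined on $\mathscr{R}$ rather than just on boxes, which follows from the refinement/compatibility observation.
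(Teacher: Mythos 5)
Your proof is correct and follows essentially the same route as the paper's: normalize so that $\sigma_k(\cd_{\underline{n}})=1$, extract a subsequence by diagonal extraction over the countable family of boxes, and apply Proposition~\ref{prop:caratheodory-extension} to extend the limiting finitely additive functional to a nonzero Radon measure. Your write-up merely spells out the compatibility and final-convergence checks that the paper leaves implicit.
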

\begin{proof}
  Multiplying each $\sigma_k$ by a positive real number if necessary, we may assume 
that for all large enough $k$, 
  $\sigma_k(\cd_{\underline{n}})=1$. Then the second assumption ensures that for 
each box $B$, the sequence $\left(\sigma_k(B)\right)_k$ is bounded.
  By a standard diagonal procedure, we can find a subsequence  $(k_j)$ such that 
for each box $B$, $\sigma_{k_j}(B)$ has a limit which we denote by $\sigma(B)$.
  Then $\sigma$ defines a finitely additive functional on the ring $\mathscr{R}$ 
of finite unions of boxes, with values in $\RR_+$. By 
Proposition~\ref{prop:caratheodory-extension},
  $\sigma$ can be extended to a Radon measure on $\B(X^d)$, which is nonzero 
since $\sigma(\cd_{\underline{n}})=1$. And we obviously have the convergence of 
$(\sigma_{k_j})$ 
  to $\sigma$.
\end{proof}

\begin{prop}
  \label{prop:absolute_continuity}
  Let $(\sigma_k)$ and $(\gamma_k)$ be two sequences of Radon measures on $X^d$, 
and assume there exist two nonzero Radon measures $\sigma$ and $\gamma$, 
  an integer $\underline{n}\ge1$ and a real number $\theta>0$
  such that
  \begin{itemize}
   \item $\sigma_k\tend{k}{\infty}\sigma$,
   \item $\gamma_k\tend{k}{\infty}\gamma$,
   \item $\forall k$, $\gamma_k\le\sigma_k$
   \item $\forall n\ge \underline{n}$, for all large enough $k$ (depending on 
$n$), $\gamma_k(\cd_n)\ge \theta \sigma_k(\cd_n)$.
  \end{itemize}
  Then $\gamma\ll\sigma$.
\end{prop}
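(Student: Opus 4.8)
The plan is to show that $\gamma$ assigns no mass to $\sigma$-null sets, by first bounding $\gamma$ by a multiple of $\sigma$ on members of the ring $\mathscr{R}$ and then upgrading this to all Borel sets via the regularity already built into Proposition~\ref{prop:caratheodory-extension}. Since $\sigma$ and $\gamma$ are nonzero Radon measures and $X^d=\bigcup_n\cd_n$, I first fix an integer $n_0\ge\underline{n}$ with $\sigma(\cd_{n_0})>0$ and $\gamma(\cd_{n_0})>0$; as the sets $\cd_n$ increase with $n$, we then have $\sigma(\cd_n)>0$ and $\gamma(\cd_n)>0$ for every $n\ge n_0$, and I set $c_n:=\gamma(\cd_n)/\bigl(\theta\,\sigma(\cd_n)\bigr)\in(0,\infty)$.

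The key step is the inequality $\gamma(B)\le c_n\,\sigma(B)$ for every $n\ge n_0$ and every $B\in\mathscr{R}$ with $B\subset\cd_n$. This is a passage to the limit in the approximating sequences: for all $k$ we have $\gamma_k(B)\le\sigma_k(B)$ by the third hypothesis, while $\gamma_k(\cd_n)\ge\theta\,\sigma_k(\cd_n)>0$ for all large $k$ by the fourth hypothesis (here $n\ge n_0\ge\underline{n}$, and $\sigma_k(\cd_n)>0$ eventually because $\sigma_k\to\sigma$). Hence for all large $k$,
\[
  \frac{\gamma_k(B)}{\gamma_k(\cd_n)}\ \le\ \frac1\theta\,\frac{\sigma_k(B)}{\sigma_k(\cd_n)}.
\]
Both sides have limits as $k\to\infty$, by Remark~\ref{rem:mn} applied to $\gamma_k\to\gamma$ on the left (licit since $\gamma(\cd_n)>0$) and to $\sigma_k\to\sigma$ on the right; passing to the limit gives $\gamma(B)/\gamma(\cd_n)\le\sigma(B)/\bigl(\theta\,\sigma(\cd_n)\bigr)$, which is the claim.

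Finally I would pass from $\mathscr{R}$ to all Borel sets. By Proposition~\ref{prop:caratheodory-extension} and uniqueness of the extension, $\sigma$ coincides with the Carathéodory outer measure built from its restriction to the $\sigma$-finite ring $\mathscr{R}$; so given a Borel set $A\subset\cd_n$ and $\varepsilon>0$ there are $R_i\in\mathscr{R}$ with $A\subset\bigcup_i R_i$ and $\sum_i\sigma(R_i)\le\sigma(A)+\varepsilon$, and after replacing each $R_i$ by $R_i\cap\cd_n\in\mathscr{R}$ we may assume $R_i\subset\cd_n$. Then $\gamma(A)\le\sum_i\gamma(R_i)\le c_n\sum_i\sigma(R_i)\le c_n\bigl(\sigma(A)+\varepsilon\bigr)$, and letting $\varepsilon\downarrow0$ gives $\gamma(A)\le c_n\,\sigma(A)$ for every Borel $A\subset\cd_n$. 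In particular, if $\sigma(A)=0$ then $\gamma(A\cap\cd_n)\le c_n\,\sigma(A\cap\cd_n)=0$ for each $n\ge n_0$, and since $A\cap\cd_n\uparrow A$ with $\gamma(\cd_n)<\infty$ we conclude $\gamma(A)=\lim_n\gamma(A\cap\cd_n)=0$; hence $\gamma\ll\sigma$.

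The only genuinely delicate point is the limit step in the second paragraph: one must combine an inequality valid only for $k$ large (depending on $n$) with convergence statements valid along the full sequence, and check that the denominators $\gamma_k(\cd_n)$ are eventually positive before dividing. Everything else is routine: the choice of $n_0$ is forced by $\sigma,\gamma$ being nonzero Radon measures, and the Borel upgrade is the standard Carathéodory/regularity mechanics already invoked in Proposition~\ref{prop:caratheodory-extension}.
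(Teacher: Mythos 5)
Your proof is correct and follows essentially the same route as the paper's: pass to the limit in the ratio inequality $\gamma_k(B)/\gamma_k(\cd_n)\le\sigma_k(B)/(\theta\,\sigma_k(\cd_n))$ for boxes $B\subset\cd_n$ via Remark~\ref{rem:mn}, then extend from $\mathscr{R}$ to Borel subsets of $\cd_n$ and exhaust $X^d=\bigcup_n\cd_n$. The paper states the Borel upgrade in one line where you spell out the Carathéodory approximation, but the argument is the same.
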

  
\begin{proof}
 Let $m\ge n\ge \underline{n}$, and let $B$ be an $m$-box. For all large enough 
$k$, we have by assumption
 \[
  \frac{\gamma_k(B)}{\gamma_k(\cd_n)} \le 
\frac{\sigma_k(B)}{\theta\sigma_k(\cd_n)}.
 \]
 But by Remark~\ref{rem:mn}, we have 
 \[
  \frac{\gamma_k(B)}{\gamma_k(\cd_n)} \tend{k}{\infty} 
\frac{\gamma(B)}{\gamma(\cd_n)},\text{ and }
  \frac{\sigma_k(B)}{\sigma_k(\cd_n)} \tend{k}{\infty} 
\frac{\sigma(B)}{\sigma(\cd_n)}.
 \]
It follows that 
\[
 \frac{\gamma(B)}{\gamma(\cd_n)} \le \frac{\sigma(B)}{\theta\sigma(\cd_n)}.
\]
The above inequality extends to each $B\in\mathscr{R}$ contained in $\cd_n$, 
and then to each $B\in\mathscr{B}(X)$ contained in 
$\cd_n$. In particular, if $B\subset \cd_n$ is Borel measurable and satisfies 
$\sigma(B)=0$, then we also have $\gamma(B)=0$. 
And since $X=\bigcup_n \cd_n$, this concludes the proof.
\end{proof}

\begin{remark}
  For each $\lb\ge1$, the definition of the ring $\mathscr{R}$ is unchanged if we consider only the finite unions of  $n_\ell$-boxes, for some $\ell\ge\lb$. Hence in Propositions~\ref{prop:compacity} and \ref{prop:absolute_continuity}, it is 
  enough for the conclusions to hold that the assumptions be verified only when $n\in\{n_\ell:\ell\ge\lb\}$.
\end{remark}

\subsection{Dissipative case}
\label{sec:dissipative}
\begin{lemma}
\label{lemma:dissipative_case}
  For each $x=(x_1,\ldots,x_d)\in X_\infty^d$, for $\ell > 
\max\{\ell(x_i):i=1,\ldots,d\}$, we have
  \[
   \#\{j\ge0: (\td)^j(x)\in C_{n_\ell + 1}^d\} = \infty. 
  \]
\end{lemma}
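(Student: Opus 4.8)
The plan is to fix $x=(x_1,\dots,x_d)\in X_\infty^d$ and $\ell>\max_i\ell(x_i)$, write $N:=n_\ell+1$, and to prove that for \emph{every} $\ell'>\ell$ there is a $j$ with $(\td)^j x\in C_N^d$ and $j\ge h_{n_{(\ell'-1)}}$; since $h_{n_{(\ell'-1)}}\to\infty$, this yields infinitely many such $j$. All the work takes place inside tower~$n_{\ell'}$. First I would note that $x_i\in C_{n_{(\ell-1)}}\subseteq C_N$, so at time $0$ each coordinate already lies inside a copy of tower~$n_{\ell''}$ for every $\ell''>\ell$; and by Lemma~\ref{lemma:Xinfty} together with~\eqref{eq:condition_nl}, no $x_i$ is among the last hundred occurrences of tower~$n_{(\ell'-1)}$ inside tower~$n_{\ell'}$, so there is a window of at least $\sim 99\,h_{n_{(\ell'-1)}}$ consecutive admissible values of $j$ (those keeping all $T^j x_i$ inside tower~$n_{\ell'}$), and we search for a good $j$ in the upper part of this window.

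The structural input is that tower~$n_{\ell'}$ is built from tower~$n_{\ell+1}$ by the special steps $n_{\ell''}\to n_{\ell''}+1$ ($\ell''=\ell+1,\dots,\ell'-1$), each inserting long spacer blocks of height $g_{\ell''}:=h_{n_{\ell''}-k(\ell'')}$ between the copies of tower~$n_{\ell''}$, interspersed with non‑special steps inserting single spacers; moreover the tower‑$N$‑occurrences inside any copy of tower~$n_{\ell+1}$ are separated by only $0$ or $1$ spacer (Remark~\ref{remark:01spacer}). Writing $T^jx_i$ by its level $b_i+j$ in tower~$n_{\ell'}$, one has $(\td)^jx\in C_N^d$ exactly when, for each $i$, $b_i+j$ avoids (a) all the long spacer blocks and (b) the sparse isolated spacer levels coming from the non‑special steps (both those between tower‑$n_{\ell+1}$‑copies and those between tower‑$N$‑occurrences). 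The decisive quantitative fact is that $h_{m+1}\ge 3h_m$ always, whence $g_{\ell''}/h_{n_{\ell''}}\le 3^{-k(\ell'')}$; since $k(\ell'')\to\infty$, this is $\le 1/(2d)$ for all but finitely many scales.

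To handle (a) I would proceed from the coarsest special level $\ell'-1$ down to $\ell+1$. At an ``easy'' scale the $d$ translates $b_i+j$ meet the scale‑$\ell''$ long spacers on a set of relative density $\le d\,g_{\ell''}/(h_{n_{\ell''}}+g_{\ell''})<\tfrac12$, so inside the interval obtained at the previous, coarser scale (which is longer than one scale‑$\ell''$ period, as $h_{n_{\ell''+1}}\gg h_{n_{\ell''}}$ by~\eqref{eq:condition_nl} and $h_{m+1}\ge3h_m$) one can select a sub‑interval of length $\gtrsim h_{n_{\ell''}}/(2d)$ free of scale‑$\ell''$ long spacers for all coordinates; iterating leaves an interval $I$ of length $\gtrsim h_{n_{\ell+1}}/(2d)$ on which every $T^jx_i$ sits inside a copy of tower~$n_{\ell+1}$. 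For (b): inside a copy of tower~$n_{\ell+1}$ the spacer levels relative to tower~$N$ number at most $3^{n_{\ell+1}-N}$ against a height $\ge 3^{n_{\ell+1}-N}h_N$, hence have density $\le 1/h_N$, and the spacers between tower‑$n_{\ell+1}$‑copies are at least as sparse; thus the union over the $d$ coordinates of the ``bad'' times in $I$ has cardinality $\le 2d\,|I|/h_N+O(d)<|I|$ once $h_N>4d$, and any remaining $j\in I$ works. Placing the interval chosen at the coarsest scale above level $h_{n_{(\ell'-1)}}$ forces $j\ge h_{n_{(\ell'-1)}}$.

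The heart of the matter, and the main obstacle, is step (a): a union bound over the $d$ coordinates is genuinely useless because the density of tower~$N$ inside tower~$n_{\ell'}$ tends to $0$, so one must exploit that each long spacer block occupies only a fixed proper fraction $\le 3^{-k(\ell'')}$ of its host tower and run the argument hierarchically, coarse to fine. The delicate point is the finitely many scales with small $k(\ell'')$ (in particular $k(\ell'')=0$), where a long spacer block is as tall as its host tower and the crude density estimate breaks down; there one must instead use the extra rigidity supplied by $X_\infty$ — each $x_i$ avoids the first and last hundred occurrences of tower~$n$ inside tower~$n_{\ell'}$ for a whole range of $n$, and, by Remark~\ref{remark:first_level} and Lemma~\ref{lemma:defT}, is never at the bottom or top level of a tower — to exclude the adversarial placements of the coordinates that would otherwise obstruct a common return time (for $\ell$ large these scales do not occur at all, which is the easy case).
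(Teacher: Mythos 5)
Your proposal takes a hierarchical density route (find, inside each window, a single time $j$ at which all $d$ translated orbits simultaneously avoid every spacer block at every scale between $N=n_\ell+1$ and $n_{\ell'}$), and it contains a genuine gap that you yourself flag but do not close: the scales $\ell''$ with $k(\ell'')$ small. When $k(\ell'')=0$ the inserted block of $h_{n_{\ell''}-k(\ell'')}=h_{n_{\ell''}}$ spacers is as tall as its host tower, so a single coordinate already occupies a spacer block for about half of each period, and for $d\ge 2$ the union bound gives relative density $\ge 1$ of bad times --- no surviving subinterval is guaranteed. These scales genuinely occur: $k(\ell'')=0$ for every $\ell''<\ell_1$, and the lemma is asserted for \emph{every} $\ell>\max_i\ell(x_i)$, so you cannot discard them as ``the easy case for $\ell$ large.'' Your proposed repair --- invoking the ``rigidity'' of $X_\infty$ --- is not an argument: membership in $X_\infty$ constrains where each $x_i$ sits at time $0$ inside its own hierarchy of towers (not in the first or last hundred occurrences, not in the bottom level), but it says nothing about the \emph{relative} offsets $b_{i_1}-b_{i_2}$ between distinct coordinates, and it is exactly these offsets that decide whether the $d$ translated families of spacer blocks jointly cover a whole period. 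So the decisive case of your step (a) is unproved, and I do not see how to prove it along these lines.

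The difficulty evaporates if, instead of hunting for a generic common avoidance time, you use the specific return times $j=h_{n_{\ell'}+1}$ for $\ell'\ge\ell$, which is what the paper does. For each coordinate, $x_i$ lies in tower $n_{\ell'}+1$ but not in its last occurrence inside tower $n_{(\ell'+1)}$ (Lemma~\ref{lemma:Xinfty}), and consecutive occurrences of tower $n_{\ell'}+1$ are separated by $0$ or $1$ spacer (Remark~\ref{remark:01spacer}); hence $T^{h_{(n_{\ell'}+1)}}x_i$ is in the same level of tower $n_{\ell'}+1$ as $x_i$ or in the level immediately below. Since $x_i$ is not in the bottom level of tower $n_\ell+1$ (Remark~\ref{remark:first_level}), both of these levels lie in $C_{n_\ell+1}$, so $(\td)^{h_{(n_{\ell'}+1)}}x\in C_{n_\ell+1}^d$ for every $\ell'\ge\ell$ --- infinitely many return times, with no density estimate and no union bound over coordinates. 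The moral is that the $0$-or-$1$-spacer structure makes $T^{h_n}$ an almost-identity on tower $n$, which handles all $d$ coordinates at once regardless of their relative positions; this is the mechanism your sketch is missing.
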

\begin{proof}
  If $\ell> \max\{\ell(x_i):i=1,\ldots,d\}$, we know by Lemma~\ref{lemma:Xinfty} 
that each coordinate $x_i$ is in $C_{n_\ell + 1}$, 
but is not in the last occurrence of tower $(n_\ell+1)$ inside tower 
$n_{(\ell+1)}$. Moreover by Remark~\ref{remark:first_level}, $x_i$ is not in the 
first level of tower $(n_\ell+1)$. The next occurrence of tower $(n_\ell+1)$ 
inside tower $n_{(\ell+1)}$ appears after 0 or 1 spacer by 
Remark~\ref{remark:01spacer}. As the height of tower $(n_\ell+1)$ is 
$h_{(n_\ell+1)}$, $T^{h_{(n_\ell+1)}}(x_i)$ is either in the same level of tower 
$(n_\ell+1)$ as $x_i$, or in the level immediately below. 
Thus $T^{h_{(n_\ell+1)}}(x_i)\in C_{n_\ell + 1}$. 
  But the same applies to any $\ell'\ge \ell$, and we get that 
$T^{h_{(n_{\ell'}+1)}}(x_i)$  is either in the same level of tower 
$(n_{\ell'}+1)$ as $x_i$, or in the level immediately below. Since these two 
levels are both included in $C_{n_\ell + 1}$ we get that 
$T^{h_{(n_{\ell'}+1)}}(x_i)\in C_{n_\ell + 1}$.
\end{proof}

\begin{prop}
  \label{prop:dissipative}
  There is no Radon, $\td$-invariant and totally dissipative measure for which $X_\infty^d$ is a conull set. In particular, there is no Radon, $\td$-invariant and totally dissipative measure whose 
marginals are absolutely continuous with respect to $\mu$.
\end{prop}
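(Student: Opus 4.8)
The plan is to argue by contradiction. Suppose $\sigma$ is a Radon, $\td$-invariant, totally dissipative measure with $\sigma(X_\infty^d)=\sigma(X^d)$ (equivalently, $X_\infty^d$ conull). Since $\sigma$ is totally dissipative and $\sigma$-finite, by Hopf's decomposition there is a wandering set $W$ whose $\td$-translates partition $X^d$ (mod $\sigma$), i.e. $X^d=\bigsqcup_{j\in\ZZ}(\td)^j(W)$ up to a $\sigma$-null set. Restricting to the conull set $X_\infty^d$, almost every point $x$ therefore has the property that its full $\td$-orbit meets the wandering set exactly once, and in particular $\#\{j\ge 0:(\td)^j(x)\in W\}\le 1$.

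Next I would bring in Lemma~\ref{lemma:dissipative_case}. Fix any $\underline{n}$ with $\sigma(C_{\underline n}^d)>0$; such an $\underline n$ exists because $\sigma$ is nonzero and $X^d=\bigcup_n C_n^d$. Choose $\ell$ large enough that $n_\ell+1\ge \underline n$. Then $C_{n_\ell+1}^d\supset C_{\underline n}^d$, so $\sigma(C_{n_\ell+1}^d)>0$; in particular the compact set $C_{n_\ell+1}^d$ has finite positive $\sigma$-measure. Now combine two facts: (i) for $\sigma$-a.e. $x$ (namely those in $X_\infty^d$ with $\ell$ beyond $\max_i\ell(x_i)$, a conull condition), Lemma~\ref{lemma:dissipative_case} gives $\#\{j\ge 0:(\td)^j(x)\in C_{n_\ell+1}^d\}=\infty$; (ii) $W$ is swept out by the orbit. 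The idea is that a compact set of finite measure in a totally dissipative system can only be visited finitely often by almost every orbit, which contradicts (i).

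To make (ii) precise, I would use the standard fact that in a totally dissipative, $\sigma$-finite, measure-preserving system, every set $A$ of finite measure satisfies $\sum_{j\ge 0}\ind{A}((\td)^j x)<\infty$ for $\sigma$-a.e.\ $x$. This follows from the wandering-set decomposition: write $A=\bigsqcup_{j\in\ZZ}(A\cap(\td)^j W)$, and since the sets $(\td)^j W$ are pairwise disjoint with total measure equal to that of the whole (finite on $C_{n_\ell+1}^d$), the measure of $A\cap(\td)^jW$ is summable in $j$; the orbit of $\sigma$-a.e.\ $x$ meets $W$ at most once, say at time $j_0(x)$, so it can enter $A$ only at times $j$ with $(\td)^j x\in A\cap(\td)^{j-j_0(x)}W$... more simply: the return-time function to $A$ along the wandering decomposition is finite a.e., or one invokes Hopf's ratio ergodic theorem / Maharam's recurrence theorem in its dissipative form. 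Applying this with $A=C_{n_\ell+1}^d$ contradicts Lemma~\ref{lemma:dissipative_case}. The last sentence of the proposition is then immediate: by Proposition~\ref{prop:dissipative}'s hypothesis combined with the fact (used throughout the paper, e.g.\ via Lemma~\ref{lemma:Xinfty}) that $X_\infty$ is $\mu$-conull, any $\td$-invariant Radon measure on $X^d$ with marginals absolutely continuous with respect to $\mu$ gives full measure to $X_\infty^d$, so it cannot be totally dissipative.

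The main obstacle I expect is phrasing (ii) cleanly: one must be careful that $\sigma$ is infinite, so "finite measure set visited finitely often" needs the wandering decomposition rather than Poincaré recurrence, and one should make sure the relevant conull set (points of $X_\infty^d$ whose orbit meets $W$ at most once, with $\ell$ large) is genuinely conull — this is fine since it is a countable intersection of conull sets. Everything else (choosing $\underline n$, the inclusion $C_{\underline n}^d\subset C_{n_\ell+1}^d$, the deduction of the second sentence) is routine.
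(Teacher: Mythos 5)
Your proposal is correct and follows essentially the same route as the paper: a wandering set $W$, Lemma~\ref{lemma:dissipative_case}, and the identity $\sigma(A)=\int_W\#\{j:(\td)^jx\in A\}\,d\sigma(x)$ (the paper phrases it as "$\sigma(W_n)>0$ forces $\sigma(C_n^d)=\infty$", you phrase the contrapositive "finite measure forces finitely many visits a.e."). The only point to tighten is the quantifier on $\ell$: for a \emph{fixed} $\ell$ the set $\{x\in X_\infty^d:\max_i\ell(x_i)<\ell\}$ is not conull but only of positive measure for $\ell$ large (which is exactly why the paper decomposes $W=\bigcup_n W_n$ and picks $n$ with $\sigma(W_n)>0$); a positive-measure set of orbits visiting $C_{n_\ell+1}^d$ infinitely often still yields the desired contradiction.
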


\begin{proof}
  Suppose that $\sigma$ is such a measure. Let $W$ be a wandering set for 
$\sigma$, with 
   \[ \sigma\left(X^d\setminus\bigcup_{j\in\ZZ}(\td)^jW\right)=0.
      \]    
 As $X_\infty^d$ is a conull set, we may assume that 
  $W\subset X_\infty^d$. 
By the previous lemma, $W=\bigcup_{n}W_n$, where 
\[ W_n\egdef \Bigl\{ 
  x\in W: \#\{j\ge0: (\td)^j(x)\in C_{n}^d\} = \infty
\Bigr\}.
\]
Hence there exists some $n$ with $\sigma(W_n)>0$. The ergodic decomposition of 
$\sigma$ writes
  \[
    \sigma = \int_W \left(\sum_{j\in\ZZ}\delta_{(\td)^j(x)}\right) \, 
d\sigma(x),
  \]
  so we get $\sigma(C_n^d)=\infty$, which contradicts the fact that $\sigma$ is 
Radon.
\end{proof}

\subsection{Main result}
\label{sec:main}
An obvious example of a $\td$-invariant Radon measure on $X^d$ is the product measure $\mu^{\otimes d}$.
Another example is what we call a 
\emph{graph measure arising from powers of $T$}: this is a measure $\sigma$ of the form
\begin{equation}
  \label{eq:graph_measure}
  \sigma(A_1\times\cdots\times A_d) = \alpha\mu(A_1\cap T^{-e_2}A_2\cap\cdots\cap T^{-e_d}A_d),
\end{equation}
for some integers $e_2,\ldots,e_d$ and some fixed positive real number $\alpha$. Such a measure is concentrated 
on the subset 
\[
  \left\{(x_1,\ldots,x_d)\in X^d: x_i=T^{e_i}x_1 \text{ for all }i=2,\ldots,d\right\}.
\]

\begin{theo}
  \label{thm:msj}
  For each $d\ge1$, the infinite measure preserving dynamical system $(X^d, 
\mu^{\otimes d},\td)$ is conservative ergodic.
  
  Moreover, if $\sigma$ is a nonzero, Radon, $\td$-invariant and ergodic measure 
on $X^d$, such that
\begin{equation}
  \label{eq:weaker}
  \sigma \left(X^d \setminus X_\infty^d\right)=0,
\end{equation}  
then 
there exists a partition of $\{1,\ldots,d\}$ into $r$ subsets $D_1,\ldots,D_r$, 
such that $\sigma=\sigma^{D_1}\otimes \cdots\otimes \sigma^{D_r}$, where 
$\sigma^{D_j}$ is a graph measure on $X^{D_j}$ arising from powers of~$T$.  
\end{theo}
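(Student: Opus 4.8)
The proof will proceed by induction on $d$, following the scheme already announced in the roadmap. For $d=1$ the statement reduces to two facts: conservativity and ergodicity of $(X,\mu,T)$ (which follows from the rank-one construction, since rank-one transformations are ergodic, and conservativity holds because $T$ is invertible and $\mu$-preserving with $\mu$ infinite — but ergodicity already forces conservativity here), and the observation that any nonzero Radon $T$-invariant ergodic measure $\sigma$ with $\sigma(X\setminus X_\infty)=0$ must be a graph measure, which in the $d=1$ case just means $\sigma=\alpha\mu$; this uses Proposition~\ref{prop:dissipative} to rule out the dissipative case and then a Hopf-ratio-type argument (or direct comparison of $\sigma$ with $\mu$ on the Rokhlin towers) to identify the conservative ergodic invariant Radon measure. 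For the inductive step, assume the theorem holds for all dimensions $<d$, fix $d\ge 2$, and fix a nonzero Radon $\td$-invariant ergodic $\sigma$ with $\sigma(X^d\setminus X_\infty^d)=0$. By Proposition~\ref{prop:dissipative}, $\sigma$ is conservative. The goal is a dichotomy: either $\sigma$ is a graph measure arising from powers of $T$, or $\sigma$ splits as a nontrivial product $\sigma=\sigma'\otimes\sigma''$ over a partition of the coordinates into two nonempty blocks, in which case the induction hypothesis applied to each factor finishes the proof.

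\textbf{Carrying out the dichotomy.} Choose a $\sigma$-typical point $x\in X_\infty^d$ and study the orbit combinatorics: for each $n$, record how the $d$ coordinates of $(\td)^j x$ sit inside tower $n$ as $j$ ranges over an appropriate interval of $\ZZ$ — this is the notion of $n$-crossing. The criterion of Proposition~\ref{prop:synchronized} (a graph-measure criterion phrased in terms of $n$-crossings) is the target of the first horn. Suppose it fails. Then for infinitely many $n$ there is an $n$-crossing, located not too far from $0$, exhibiting a desynchronization between some pair of coordinates. Using the combinatorial analysis of $n$-crossings (Proposition~\ref{prop:combinatorics_of_ncrossings} and its hierarchy lemma), together with the convergence criteria for empirical measures (Propositions~\ref{prop:convergence_empirical_measures_interval} and \ref{prop:convergence_empirical_measures_pieces}), one shows that along a suitable subsequence certain empirical measures built from the orbit of $x$ converge to $\sigma$ and also to the push-forward of $\sigma$ under a twisting transformation (Definition~\ref{def:twist}). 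Hence $\sigma$ is invariant under a nontrivial twisting transformation (Proposition~\ref{prop:twist}), and then Proposition~\ref{prop:product} — which rests on the disjointness-type result imported from~\cite{ChaconInfinite} — yields the desired product decomposition. One must split into cases according to where the desynchronizing integers $n$ fall relative to the special sequence $(n_\ell)$, because the geometry of the spacers (hence the structure of $n$-crossings) is different at the steps $n_\ell$ and elsewhere; each case is handled by the same twist-then-product mechanism with the appropriate choice of which coordinates to twist.

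\textbf{Where the difficulty lies.} The hard part is not the inductive bookkeeping but the combinatorial heart: establishing that failure of the synchronization criterion genuinely produces a twisting invariance. This requires controlling, uniformly along infinitely many scales $n$, how the $d$ orbit-coordinates shift relative to one another inside the Rokhlin towers, and translating that control into convergence of empirical measures in the sense of Definition~\ref{def:convergence}. The obstruction is that a priori the desynchronization could be "smeared" across many scales in a way that does not stabilize; the hierarchy lemma (Lemma~\ref{lemma:combinatorics}) is precisely the device that forces enough rigidity. A secondary subtlety is ensuring the twisting transformation one extracts is genuinely nontrivial (acts as $T$ on a proper nonempty subset of coordinates and as $\Id$ on the rest), so that the product decomposition is into strictly smaller blocks and the induction hypothesis applies; this is where one uses that $x\in X_\infty^d$, i.e. that each coordinate avoids the extreme occurrences of the towers, so that the empirical measures see a "generic" portion of the orbit rather than a degenerate boundary region.
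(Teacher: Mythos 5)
Your proposal follows essentially the same route as the paper: induction on $d$, reduction to the dichotomy ``graph measure versus product splitting'' via the synchronization criterion for $n$-crossings of a $\sigma$-typical point, and the twist-then-product mechanism (Propositions~\ref{prop:twist} and~\ref{prop:product}) applied case by case according to the position of the desynchronizing scales relative to $(n_\ell)$. You also correctly locate the two main difficulties (the combinatorial rigidity supplied by Lemma~\ref{lemma:combinatorics} and the nontriviality of the extracted twisting transformation), so this is a faithful outline of the paper's own argument.
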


\begin{corollary}
\label{cor:main}  If $\sigma$ is a nonzero, Radon, $\td$-invariant  measure 
on $X^d$, whose marginals are absolutely continuous with respect to $\mu$, 
then $\sigma$ decomposes as a sum of countably many ergodic components, which are all of 
the form described in Theorem~\ref{thm:msj}.
\end{corollary}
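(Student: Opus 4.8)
The plan is to deduce the corollary from Theorem~\ref{thm:msj} via the ergodic decomposition of $\sigma$, the only subtlety being that we must first reduce to the situation covered by the theorem, namely a measure living on $X_\infty^d$ whose ergodic components are themselves Radon. First I would record the elementary fact that if each marginal $\sigma_i$ of $\sigma$ satisfies $\sigma_i\ll\mu$, then $\sigma(X^d\setminus X_\infty^d)=0$: indeed $X^d\setminus X_\infty^d\subset\bigcup_{i=1}^d\{x:x_i\notin X_\infty\}$, and each of these sets has zero $\sigma$-measure because $\mu(X\setminus X_\infty)=0$ by Lemma~\ref{lemma:Xinfty} and $\sigma_i\ll\mu$. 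So $\sigma$ satisfies the hypothesis~\eqref{eq:weaker} of the theorem (except for ergodicity), and in particular, by Proposition~\ref{prop:dissipative}, $\sigma$ is conservative.

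Next I would invoke the ergodic decomposition of the conservative measure-preserving system $(X^d,\sigma,\td)$. Since $X^d$ is a Polish space (it is a countable union of the compact metrizable sets $\cd_n$, hence Polish, being a $G_\delta$ in the compact metrizable $\overline X^d$ — or simply note it is second countable locally compact Hausdorff), the ergodic decomposition theorem for conservative $\sigma$-finite invariant measures applies: there is a standard probability space $(\Xi,\nu)$ and a measurable family $\xi\mapsto\sigma_\xi$ of ergodic, conservative, $\td$-invariant $\sigma$-finite measures with $\sigma=\int_\Xi\sigma_\xi\,d\nu(\xi)$. The key point to check here is that, for $\nu$-a.e.\ $\xi$, the component $\sigma_\xi$ is a \emph{Radon} measure concentrated on $X_\infty^d$. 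Concentration on $X_\infty^d$ is immediate since $\sigma(X^d\setminus X_\infty^d)=0$ forces $\sigma_\xi(X^d\setminus X_\infty^d)=0$ for $\nu$-a.e.\ $\xi$. For the Radon property, I would argue componentwise on the countably many sets $\cd_n$: since $\sigma(\cd_n)<\infty$ and $\sigma(\cd_n)=\int_\Xi\sigma_\xi(\cd_n)\,d\nu(\xi)$, we get $\sigma_\xi(\cd_n)<\infty$ for $\nu$-a.e.\ $\xi$; intersecting over all $n$ gives a $\nu$-conull set of $\xi$ for which $\sigma_\xi(\cd_n)<\infty$ for every $n$, i.e.\ $\sigma_\xi$ is Radon. (One should discard at the same time the $\nu$-null set where $\sigma_\xi$ is the zero measure, if the decomposition produces such components.)

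Now each surviving $\sigma_\xi$ is a nonzero, Radon, $\td$-invariant, ergodic measure satisfying~\eqref{eq:weaker}, so Theorem~\ref{thm:msj} applies and $\sigma_\xi$ is of the stated form: a tensor product, indexed by a partition of $\{1,\ldots,d\}$, of graph measures arising from powers of $T$. Finally I would argue that only countably many such components actually occur. Here I would use that the family of measures appearing in Theorem~\ref{thm:msj} is itself countable up to the normalizing constant $\alpha$: there are finitely many partitions of $\{1,\ldots,d\}$, and for each block the graph measure is determined by a finite tuple of integers $(e_i)$, so the set of such measures is parametrized by a countable set times a positive scalar. Grouping the ergodic components of $\sigma$ according to which ``shape'' (partition plus integer data) they have, and absorbing the scalars, one sees that $\nu$ is carried by a countable set and hence $\sigma=\sum_{m}c_m\rho_m$ is a countable sum of measures $\rho_m$ each of the form described in Theorem~\ref{thm:msj}. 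The main obstacle I anticipate is the measurability/standardness bookkeeping needed to apply the ergodic decomposition for infinite conservative invariant measures and to conclude that the components are Radon; the rest is essentially formal, and the countability step is immediate once one observes that the parameter space in Theorem~\ref{thm:msj} is countable modulo scaling.
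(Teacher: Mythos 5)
Your argument is correct and is exactly the routine deduction the paper leaves to the reader: reduce to ergodic components via the ergodic decomposition of a conservative $\sigma$-finite invariant measure on the standard ($\sigma$-compact) space $X^d$, check that almost every component is nonzero, Radon and carried by $X_\infty^d$, apply Theorem~\ref{thm:msj}, and use the fact that the resulting family of ergodic measures is countable up to scalars (and mutually singular) to collapse the decomposition into a countable sum. The only step worth stating explicitly is the passage from Proposition~\ref{prop:dissipative} to conservativity of a possibly non-ergodic $\sigma$: one applies the proposition to the dissipative part of the Hopf decomposition of $\sigma$, which is itself Radon, $\td$-invariant, totally dissipative and concentrated on $X_\infty^d$, hence zero.
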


To prove Theorem~\ref{thm:msj} in  the case $d=1$, we even do not need assumption~\eqref{eq:weaker} as we can show that 
$\mu$ is, up to a multiplicative constant, the only $T$-invariant, Radon measure 
on $X$ (the proof is the same as for the Chacon infinite transformation, see 
Proposition~2.4 in~\cite{ChaconInfinite}).

We also note that, if we have proved the second part of the 
theorem for some $d\ge2$, then the first one follows immediately. Indeed, if $\mu^{\otimes d}$ 
were not ergodic, then almost all its ergodic components would 
satisfy~\eqref{eq:weaker}, hence would be a product of graph measures different 
from $\mu^{\otimes d}$. But this would mean that for $\mu^{\otimes d}$-almost 
all $x\in X^d$, there exist at least two coordinates of $x$ lying on the same 
$T$-orbit, which of course is absurd. Hence $\mu^{\otimes d}$ is ergodic, and by 
Proposition~\ref{prop:dissipative}, it is conservative.

\bigskip

The remainder of this paper is devoted to the proof by induction of the second part of 
Theorem~\ref{thm:msj}. So we now assume 
that for some $d\ge2$, the statement is true up to $d-1$. We consider a nonzero, 
Radon, $\td$-invariant and ergodic measure  $\sigma$ on $X^d$, 
satisfying~\eqref{eq:weaker}.

We will show that either $\sigma$ is  a graph measure arising from powers of $T$, or it can be 
decomposed into a product of two measures $\sigma_1\times\sigma_2$, $\sigma_i$ being 
a $T^{\times d_i}$-invariant Radon measure on $X^{d_i}$ for some $1\le d_i<d$, $d_1+d_2=d$.
In this latter case we can apply the induction hypothesis to each $\sigma_i$, which yields the announced
result.

\subsection{Choice of a $\sigma$-typical point}
\label{sec:typicalpoint}
By Proposition~\ref{prop:dissipative}, the system $(X^d,\sigma,\td)$ is 
conservative ergodic.
 By Hopf's ergodic theorem, if $B\subset C\subset X^d$ with 
$0<\sigma(C)<\infty$, we have for $\sigma$-almost every point 
$x=(x_1,\ldots,x_d)\in X^d$
\begin{equation}
  \label{eq:Hopf}
  \dfrac{\sum_{j\in J}\ind{B}((\td)^jx)}{\sum_{j\in J}\ind{C}((\td)^jx)}
  \tend{|J|}{\infty} \dfrac{\sigma(B)}{\sigma(C)},
\end{equation}
where the sums in the above expression range over a set $J$ of consecutive 
integers containing 0.

 We say that $x\in X^d$ is \emph{typical} if, for all $n$ large enough so that 
$\sigma(\cd_n)>0$, Property~\eqref{eq:Hopf} holds whenever $B$ is an $n$-box and 
$C$ is $\cd_n$.
We know that $\sigma$-almost every $x\in X^d$ is typical. Therefore, there exists a point
$x=(x_1,\ldots,x_d)$ such that
\begin{equation}
  \label{eq:typical1}
  \begin{minipage}[t]{0.9\textwidth}
    For each $j\in\ZZ$, $(\td)^jx$ is typical.
  \end{minipage}
\end{equation}
Since there are only countably many boxes, we may also assume that
\begin{equation}
  \label{eq:typical2}
  \begin{minipage}[t]{0.9\textwidth}
    For each box $B$,  $x\in B\Longrightarrow\sigma(B)>0$.
  \end{minipage}
\end{equation}
Moreover, by~\eqref{eq:weaker}, we can further assume that 
\begin{equation}
  \label{eq:typical3}
  \begin{minipage}[t]{0.9\textwidth}
    $\forall i=1,\ldots,d$, $x_i\in X_\infty$.
  \end{minipage}
\end{equation}
From now on, we fix a point $x=(x_1,\ldots,x_d)$ satisfying the above assumptions 
\eqref{eq:typical1}, \eqref{eq:typical2} and \eqref{eq:typical3}. We will derive properties 
of $\sigma$ from the observations made on the orbit of this point $x$.

By an \emph{interval}, we mean in this paper a finite set of consecutive integers.
We will need the following key notion in our argument. 
\begin{definition}
\label{def:ncrossing}
  We call \emph{$n$-crossing} a maximal interval $J\subset \ZZ$ with the following properties:
  \begin{itemize}
    \item $(\td)^j x\in \cd_n$ for each $j\in J$,
    \item for each $1\le i\le d$, $j\mapsto t_n(T^jx_i)$ is constant on $J$.
  \end{itemize}
  An $n$-crossing is said to be \emph{synchronized} if $t_n(T^jx_1)=\cdots=t_n(T^jx_d)$ 
for each $j$ in this $n$-crossing.
\end{definition}

  Note that an $n$-crossing has at most $h_n$ elements. If $j$ is 
the smallest (respectively the largest) element of an $n$-crossing, then there 
exists $1\le i\le d$ such that $T^{j}x_i$ is in the first  (respectively the 
last) level of tower $n$. Observe also that when $j$ runs over an $n$-crossing, 
$(\td)^j x$ successively passes through each $n$-box of some $n$-diagonal.

\subsection{Characterizations of graph measures arising from powers of $T$}
\label{sec:graph}

\begin{lemma}
\label{lemma:graph}
  The following assertions are equivalent:
  \begin{enumerate}[(i)]
    \item $\sigma$ is a graph measure arising from powers of $T$;
    \item $\exists e_2,\ldots,e_d\in\ZZ$: $x_i=T^{e_i}x_1$ for each $i=2,\ldots,d$;
    \item $\exists\nb:\forall n\ge\nb$, $t_n(x_1)=\cdots=t_n(x_d)$;
    \item $\exists j,\exists\nb:\forall n\ge\nb$, $t_n(T^jx_1)=\cdots=t_n(T^jx_d)$.
  \end{enumerate}
\end{lemma}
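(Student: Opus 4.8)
The plan is to prove the cycle $(i)\Rightarrow(ii)\Rightarrow(iii)\Rightarrow(iv)\Rightarrow(i)$, or rather to establish the easy implications first and then focus on the substantial one. The implications $(ii)\Rightarrow(iii)$ and $(iii)\Rightarrow(iv)$ are essentially immediate: if $x_i=T^{e_i}x_1$ for all $i$, then for $n$ large enough that all the relevant orbit segments stay inside a single occurrence of tower $n$ (so that $T^{e_i}$ does not change the subcolumn index), $t_n(x_i)=t_n(x_1)$; and $(iii)$ is the special case $j=0$ of $(iv)$. The implication $(i)\Rightarrow(ii)$ follows from the description of a graph measure arising from powers of $T$ given in~\eqref{eq:graph_measure}: such a $\sigma$ is concentrated on $\{(x_1,\ldots,x_d): x_i=T^{e_i}x_1\}$, and since by~\eqref{eq:typical2} our typical point $x$ lies in every box of positive $\sigma$-measure, $x$ must lie in (the closure considerations aside, actually in) this concentration set, giving~$(ii)$.

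The main work is $(iv)\Rightarrow(i)$, and within that, the key reduction $(iv)\Rightarrow(ii)$ — once we have $(ii)$, recovering $(i)$ will come from showing $\sigma$ is then forced to be the graph measure with those exponents, using $\td$-invariance (Lemma~\ref{lemma:diagonal}) together with ergodicity and typicality of $x$. So suppose $(iv)$ holds: there is $j_0$ and $\nb$ with $t_n(T^{j_0}x_1)=\cdots=t_n(T^{j_0}x_d)$ for all $n\ge\nb$. First I would reduce to $j_0=0$ by replacing $x$ with $(\td)^{j_0}x$, which by~\eqref{eq:typical1} is still typical; so assume $t_n(x_1)=\cdots=t_n(x_d)$ for all $n\ge\nb$. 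Now the plan is to use Remark~\ref{rem:nonemptyintersection}: the common sequence of subcolumn choices $t_n(x_1)=\cdots=t_n(x_d)$ for $n\ge\nb$ means that, once we know which level of tower $\nb$ each $x_i$ sits in, the coordinates $x_i$ all ``follow the same path'' up the successive towers. More precisely, using the formula~\eqref{eq:induction_for_j_n} (and its $n_\ell$-variant) relating $j_{n+1}$ to $j_n$ via $t_n$, the difference $j_n(x_i)-j_n(x_1)$ evolves in a controlled way: since $t_n(x_i)=t_n(x_1)$ for $n\ge\nb$, and the map $p_n$ is (by~\eqref{eq:prop_pn}) locally a shift, the two orbits $(T^m x_1)$ and $(T^m x_i)$ differ by a bounded shift that stabilizes. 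Concretely, I expect that for $n\ge\nb$ the quantity $j_n(x_i)-j_n(x_1)$ is eventually constant, equal to some integer $e_i$, and that this forces $x_i=T^{e_i}x_1$: indeed, for each large $n$, $x_i$ and $T^{e_i}x_1$ lie in the same level of tower $n$ (both in level $j_n(x_1)+e_i$), so they agree by Remark~\ref{rem:nonemptyintersection}.

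The delicate point — and the step I expect to be the main obstacle — is controlling $j_n(x_i)-j_n(x_1)$ across the \emph{exceptional} steps $n=n_\ell$, where the recursion for $j_{n+1}$ picks up the extra $h_{n_\ell-k(\ell)}$ spacers and behaves differently; one must check that, because the spacer contribution depends only on $t_{n_\ell}$ (which is common to all coordinates) and not on the individual level index, the difference $j_{n+1}(x_i)-j_{n+1}(x_1)$ is still $j_n(x_i)-j_n(x_1)$ provided neither coordinate falls at the very top where an extra single spacer is inserted on the second subcolumn — and here is where assumption~\eqref{eq:typical3}, i.e.\ $x_i\in X_\infty$, together with Remark~\ref{remark:first_level} and Lemma~\ref{lemma:Xinfty}, is used to rule out the problematic boundary positions for all $\ell$ large enough. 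Once the difference stabilizes to $e_i$, we set $x_i=T^{e_i}x_1$ as above, obtaining $(ii)$; then for $(i)$, one observes that the measure $\sigma$, being $\td$-invariant, ergodic, Radon and charging the box containing $x$, must by Lemma~\ref{lemma:diagonal} assign equal mass to all $n$-boxes on any given $n$-diagonal, and since $x$ sits on the diagonal-type subset $\{x_i=T^{e_i}x_1\}$ which is exactly the support of the graph measure~\eqref{eq:graph_measure} with exponents $e_i$, ergodicity forces $\sigma$ to be (a multiple of) that graph measure. This last identification is essentially the converse bookkeeping of Lemma~\ref{lemma:diagonal} and should be routine.
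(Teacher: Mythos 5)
Your proposal is correct and follows essentially the same route as the paper: (i)$\Leftrightarrow$(ii) via the $n_\ell$-box/typicality analysis, (ii)$\Rightarrow$(iii)$\Rightarrow$(iv) easily, and (iv)$\Rightarrow$(ii) by reducing to $j=0$ and showing $j_n(x_i)-j_n(x_1)$ is constant for $n\ge\nb$, then intersecting levels. The only remark worth making is that the ``delicate point'' you flag at the steps $n=n_\ell$ is not actually delicate: by~\eqref{eq:induction_for_j_n} and its $n_\ell$-variant, the increment $j_{n+1}-j_n$ depends only on $t_n$ and on $n$ (never on the level index itself), so equal $t_n$'s immediately give equal increments and no appeal to $X_\infty$ or to boundary positions is needed for that induction.
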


\begin{proof}
  Let us first prove that (i) $\Longrightarrow$ (ii). If $\sigma$ is a graph measure arising from
  powers of $T$, then there exist a positive real number $\alpha$ and integers $e_2,\ldots,e_d$ such that 
  for all measurable subsets $A_1,\ldots,A_d$ of $X$, \eqref{eq:graph_measure} holds. 
  Observe that, if $\ell$ is large enough so that $h_{n_\ell - k(\ell)} > \max\{|e_2|,\ldots,|e_d|\}$, then
  for each $i=2,\ldots,d$ and each $j,j'\in\{0,\ldots,h_{n_\ell}-1\}$, 
  \[
    L_{n_\ell}^j \cap T^{-e_i}L_{n_\ell}^{j'}=\begin{cases}
                                           L_{n_\ell}^j &\text{ if }j'=j+e_i,\\
                                           \emptyset &\text{ otherwise}.
                                         \end{cases}
  \]
It follows that the only $n_\ell$-boxes that may be charged by $\sigma$ are of the form
$L_{n_\ell}^{j_1}\times L_{n_\ell}^{j_1+e_2}\times \cdots \times  L_{n_\ell}^{j_1+e_d}$ for some $j_1$. 
By assumption~\eqref{eq:typical2}, it follows that for each $i=2,\ldots,d$, $j_{n_\ell}(x_i)= j_{n_\ell}(x_1)+e_i$.
Since this is true for all large enough $\ell$, this in turn implies that for each $i=2,\ldots,d$, $x_i=T^{e_i}x_1$. 

Conversely, if (ii) holds, the same argument shows that if  $\ell$ is large enough so that 
$h_{n_\ell - k(\ell)} > \max\{|e_2|,\ldots,|e_d|\}$, then the only $n_\ell$-boxes that can contain $x$ are
the $n_\ell$-boxes of the form $L_{n_\ell}^{j_1}\times L_{n_\ell}^{j_1+e_2}\times \cdots \times  L_{n_\ell}^{j_1+e_d}$ for some $j_1$. 
Note that the $n_\ell$-boxes of this form constitute an $n_\ell$-diagonal, which we denote by $D$.
But (ii) is also valid for each $(\td)^jx$, $j\in\ZZ$ hence the argument also applies to each $(\td)^jx$. Thus, if $B$ is an 
$n_\ell$-box which is not on $D$, then $(\td)^jx\notin B$ for each $j\in\ZZ$. 
Now, remembering that $x$ is typical for $\sigma$, we have for each $n_\ell$-box $B=L_{n_\ell}^{j_1}\times\cdots\times L_{n_\ell}^{j_d}$
 \[
    \dfrac{\sigma(B)}{\sigma(\cd_{n_\ell})} = \lim_{k\to\infty} \dfrac{\sum_{-k\le j\le k}\ind{B}((\td)^jx)}{\sum_{-k\le j\le k}\ind{\cd_{n_\ell}}((\td)^jx)}.
 \]
 The above limit is 0 if $B$ is not on $D$. Moreover, note that each time the orbit of $x$ passes through $\cd_{n_\ell}$, $(\td)^jx$ successively passes
 through each $n_\ell$-box on $D$. Hence if $B$ is on $D$, the limit is equal to the inverse of the number of $n_\ell$ boxes 
 on $D$. In particular the limit is proportional to 
 $\mu\left(L_{n_\ell}^{j_1}\cap T^{-e_2}L_{n_\ell}^{j_1}\cap\cdots\cap T^{-e_d}L_{n_\ell}^{j_d}\right)$. The coefficient of proportionality depends
 \textit{a priori} on $\ell$, but since each $n_\ell$-box is a union of disjoint $n_{\ell+1}$-boxes, we see that in fact this coefficient does not depend on $\ell$.
 Finally, this gives~\eqref{eq:graph_measure} in the case of an $n_\ell$-box for each large enough $\ell$, and this is enough to conclude that \eqref{eq:graph_measure}
 holds for each measurable set of the form $A_1\times\cdots\times A_d$. We have so far proved the equivalence of (i) and~(ii).

Now let us turn to the proof of (ii) $\Longrightarrow$ (iii). Since $x_1\in X_\infty$, we have $j_n(x_1)\to \infty$ and 
$h_n-j_n(x_1)\to \infty$ as $n\to\infty$. If (ii) holds, we then have $j_n(x_i)=j_n(x_1)+e_i$ for each $i=1,\dots,d$ and each
$n$ large enough so that $\min\{j_n(x_1),h_n-j_n(x_1)\}>\max\{|e_2|,\ldots,|e_d|\}$. But then for such an $n$ we also have 
$t_n(x_i)=t_n(x_1)$ for each $i=1,\ldots,d$.

The implication (iii) $\Longrightarrow$ (iv) is obvious.

Assume now that (iv) holds with $j=0$ (\textit{i.e.} that, in fact, (iii) holds). For $i=2,\ldots,d$, we then have by an easy
induction that $j_n(x_i)-j_n(x_1)=j_{\nb}(x_i)-j_{\nb}(x_1)$ for each $n\ge \nb$. Setting $e_i\egdef j_{\nb}(x_i)-j_{\nb}(x_1)$
for $i=2,\ldots,d$, we get that $x_i=T^{e_i}x_1$ and we have (ii). Now if (iv) holds with some $j\in\ZZ$, we get (ii) for $(\td)^jx$, 
which is clearly equivalent to (ii) for $x$. Thus we have proved that (iv) $\Longrightarrow$ (ii) and this concludes the proof of the lemma.
\end{proof}

For the remainder of the paper, we also fix a real number $0<\eta<1$, 
small enough so that  $\eta < \frac{1}{100 d}$.  In particular we will need the inequality $(1-\eta)^2>1/2$.

\newcommand{\ci}{\boldsymbol I}
\begin{definition}
For each $n$, let $\ci_n\egdef \{-\lfloor h_n/2\rfloor,\ldots,-\lfloor h_n/2\rfloor+h_n-1\}$ be the interval of length $h_n$
and centered at 0.
  For each $n\ge0$, we call \emph{substantial $n$-crossing} any $n$-crossing whose intersection with  $\ci_n$ 
  countains at least $\eta h_n$ elements.
\end{definition}

\begin{lemma}
  \label{lemma:substantialncrossings}
  If $n=n_{(\ell-1)}$ for some large enough $\ell$, then substantial $n$-crossings 
  cover a proportion at least $(1-(d+2)\eta)$ of $\ci_n$. In particular, there exists at least one substantial $n$-crossing.
  Moreover, if all substantial $n$-crossings 
  are synchronized, then  each substantial $n$-crossing is of size at least $(1-(d+2)\eta) h_n$, and 
  there are at most two of them.
\end{lemma}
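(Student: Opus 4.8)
The plan is to analyze the orbit segment of $x$ over the interval $\ci_n$ for $n = n_{(\ell-1)}$, and to count how it is divided into $n$-crossings. First I would use assumption~\eqref{eq:typical3} together with Lemma~\ref{lemma:Xinfty} and Remark~\ref{remark:1.6}: for $\ell$ large enough (larger than all $\ell(x_i)$), each coordinate $x_i$ lies in $C_{n}$ and, more precisely, is neither in the first hundred nor the last hundred occurrences of tower~$n$ inside tower~$n_\ell$ (note $n = n_{(\ell-1)} \le n_\ell - \ell$). Since $\ci_n$ has length $h_n$, as $j$ ranges over $\ci_n$ the orbit $(\td)^j x$ stays close to the occurrence of tower~$n$ containing $x$: each coordinate $T^j x_i$ either stays in that occurrence of tower~$n$, or moves to one of the $O(1)$ neighbouring occurrences (separated by $0$ or $1$ spacer each, by Remark~\ref{remark:01spacer}), for $n_{(\ell-1)}+1 \le n \le n_\ell - 1$ — actually since $n = n_{(\ell-1)}$ itself, one must be slightly careful, but the surrounding occurrences inside tower~$n_\ell$ are still separated by at most one spacer using the same remark applied at level $n+1, \ldots, n_\ell-1$. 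In particular, for $j \in \ci_n$ the point $(\td)^j x$ is in $\cd_n$ for all but at most a bounded number of ``gap'' values of $j$, coming from the finitely many spacers crossed by each of the $d$ coordinates; the number of such bad $j$ is $O(d)$, which is $\le \eta h_n$ for $\ell$ large.

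Next I would count the $n$-crossings meeting $\ci_n$. Each $n$-crossing has at most $h_n$ elements, and a new $n$-crossing begins only when either the orbit exits $\cd_n$ (a spacer) or some coordinate's value of $t_n(T^j x_i)$ changes, which happens only when that coordinate passes through the top of an occurrence of tower~$n$ into the next one. Over $\ci_n$ each coordinate makes at most a bounded number $O(1)$ of such transitions (it moves through at most a bounded number of occurrences of tower~$n$ inside tower~$n_\ell$, since the total displacement is $h_n$ and consecutive occurrences are at ``distance'' roughly $h_n$), so the number of $n$-crossings meeting $\ci_n$ is $O(d)$. Removing from $\ci_n$ the $O(d)$ gap-points and the $O(d)$ $n$-crossings that are ``small'' (fewer than $\eta h_n$ points in $\ci_n$), we see that the substantial $n$-crossings cover at least $h_n - O(d)\eta h_n \ge (1 - (d+2)\eta) h_n$ of $\ci_n$, provided $\ell$ is large enough that the absolute $O(d)$ error terms are absorbed — here I would fix the bookkeeping so the number of relevant crossings is at most $d+2$. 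In particular, since $1 - (d+2)\eta > 0$ (as $\eta < \frac{1}{100d}$), there is at least one substantial $n$-crossing.

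For the last assertion, suppose all substantial $n$-crossings are synchronized. A synchronized $n$-crossing has the feature that $t_n(T^j x_1) = \cdots = t_n(T^j x_d)$ throughout, so the boundary between two consecutive substantial synchronized $n$-crossings must correspond to \emph{all} $d$ coordinates simultaneously passing through a tower-$n$ boundary (or a spacer) — but the $d$ coordinates are at fixed relative $T$-positions, and once they are all in the same occurrence of tower~$n$ and synchronized, they exit that occurrence within a window of width $O(1)$ of each other; hence between leaving one occurrence and the next synchronized stretch there is only a bounded transition region, and in fact a synchronized $n$-crossing runs essentially the full height $h_n$ minus a bounded correction. So each substantial synchronized $n$-crossing has size at least $(1-(d+2)\eta)h_n$ (absorbing the bounded correction into the $\eta h_n$ slack). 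Since they are disjoint subintervals of $\ZZ$, each of length $> h_n/2$, and each meets $\ci_n$ (an interval of length $h_n$) in at least $\eta h_n$ points, there can be at most two of them: a third one of length $> h_n/2$ meeting an interval of length $h_n$ would force two of them to overlap.

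The main obstacle I expect is the careful bookkeeping of the ``$O(d)$'' error terms: one must verify that the number of $n$-crossings that intersect $\ci_n$ but fail to be substantial, plus the number of spacer-gaps, is genuinely bounded by a small multiple of $d$ (I am aiming for the precise constant making the cover bound $(1-(d+2)\eta)h_n$ come out), and that a synchronized substantial $n$-crossing really does extend to within $\le \eta h_n$ of the full height $h_n$ rather than merely being ``large''. Both reduce to the combinatorics of how the $d$ coordinates, at fixed relative positions, travel through consecutive occurrences of tower~$n$ inside tower~$n_\ell$ separated by at most one spacer, which is exactly the kind of estimate set up by Lemma~\ref{lemma:Xinfty}, Remark~\ref{remark:01spacer} and Remark~\ref{remark:1.6}; the argument is elementary but the constants need to be tracked honestly.
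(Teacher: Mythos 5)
Your overall strategy---bound the set of $j\in\ci_n$ where the orbit leaves $\cd_n$, bound the number of $n$-crossings meeting $\ci_n$, subtract the non-substantial ones, then use synchronization for the last assertion---is the same as the paper's. But two steps are genuinely wrong as written.

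First, you invoke Remark~\ref{remark:01spacer} to conclude that consecutive occurrences of tower~$n$ inside tower~$n_\ell$ are separated by at most one spacer, so that the set of bad $j$ has $O(d)$ elements. This fails precisely for $n=n_{(\ell-1)}$, which is the case the lemma treats (and the reason the remark excludes it): at step $n_{(\ell-1)}+1$ the construction inserts $h_{n'}$ or $h_{n'}+1$ spacers between consecutive occurrences of tower~$n_{(\ell-1)}$, where $n'=n_{(\ell-1)}-k(\ell-1)$. So a coordinate can spend up to $h_{n'}+1$ consecutive times outside $C_n$, and the bad set has up to $d(h_{n'}+1)$ elements---unbounded in $\ell$. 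The bound $\le\eta h_n$ is rescued not by boundedness but by the fact that $(h_{n'}+1)/h_n$ is of order $3^{-k(\ell-1)}$, hence $<\eta/d$ for $\ell$ large; this is exactly where the hypothesis ``large enough $\ell$'' is consumed, and your argument skips it.

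Second, the ``at most two'' part. (a) The claim that the coordinates ``are at fixed relative $T$-positions'' and exit an occurrence ``within a window of width $O(1)$'' is unjustified: the $x_i$ are arbitrary points of $X_\infty$, and all one gets is that on a substantial synchronized crossing the levels $j_n(T^jx_i)$ differ by at most $(d+2)\eta h_n$---and this bound is itself a consequence of the covering estimate of the first part, not an a priori input. (b) Your pigeonhole count does not close: three disjoint intervals of length $>h_n/2$, each meeting $\ci_n$ in at least $\eta h_n$ points, require only $h_n/2+2\eta h_n<h_n$ points of $\ci_n$, so no contradiction arises (nor does one arise with length $(1-(d+2)\eta)h_n$, which gives $(1-d\eta)h_n<h_n$). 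The correct mechanism is dynamical: between two consecutive substantial crossings at least one coordinate passes the top of tower~$n$, so its $t_n$ increases by $1\bmod 3$; synchronization of both crossings then forces \emph{every} coordinate's $t_n$ to increase, i.e.\ every coordinate passes the top in that gap; and each coordinate passes the top at most once on an interval of length $h_n$, whence at most one gap and at most two substantial crossings. You assert that all coordinates cross the top at the boundary but omit this $\bmod\,3$ justification, and your replacement counting argument is not sufficient.
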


\begin{proof}
Let us start by considering the case of an integer $n$ which is of the form $n=n_{(\ell-1)}$ for some 
$\ell>\max_i \ell(x_i)$.   We also assume that $\ell$ is large enough so that 
\[
  \frac{1}{3^{k(\ell-1)}} < \frac{\eta}{2d}.
\]
We set $n'\egdef n_{(\ell-1)}-k(\ell-1)$, and we observe that the above assumption ensures that 
\begin{equation}
  \label{eq:n'n}
  \frac{h_{n'}+1}{h_n}< \frac{\eta}{d}.
\end{equation}
We know by Lemma~\ref{lemma:Xinfty} that $x\in \cd_{n_{(\ell-1)}}=\cd_n$,
and that the interval 
\[ \{-100 h_{(n_\ell-\ell)},\ldots ,100 h_{(n_\ell-\ell)}\}\]
is contained in a single $n_\ell$-crossing.
\textit{A fortiori}, $\ci_n$ is contained  in a single $n_\ell$-crossing.
Therefore, if a coordinate $T^jx_i$ reaches the top of tower~$n$ and comes back to $C_n$ on the interval $\ci_n$,
then the two passages in $C_n$ are separated by at most $h_{n'}+1$. 
Moreover, this can happen at most once on the interval $\ci_n$ for each $i$.
It follows that the set of integers $j\in\ci_n$ such that 
$(\td)^j x \notin \cd_n$ is constituted of at most $d$ pieces, and its cardinality is bounded above by $\eta h_n$ by~\eqref{eq:n'n}.
Then there exist at most $(d+1)$ $n$-crossings intersecting $\ci_n$, and they cover a proportion at least $(1-\eta)$ of $\ci_n$.
Now the proportion of $\ci_n$ covered by $n$-crossings which are not substantial is less than $(d+1)\eta$,
hence the proportion of $\ci_n$ covered by substantial $n$-crossings is at least $(1-(d+2)\eta)$. This proves the first part of the lemma

Let us assume now that all substantial $n$-crossings are synchronized.
If we have only one substantial $n$-crossing, then this $n$-crossing is of size at least $(1-(d+2)\eta) h_n$, 
and we have for $j$ in this $n$-crossing
\begin{equation}
  \label{eq:synchronized}
  \left|j_n(T^j x_{i_1})-j_n(T^j x_{i_2})\right| \le (d+2)\eta h_n.
\end{equation}
If we have at least two substantial $n$-crossings, note that between two of them, there is at least one coordinate passing through the 
top of tower~$n$, and for which $t_n$ has increased by 1 $\bmod\ 3$. 
Since the $t_n(T^j x_i)$, $i=1,\ldots,d$ are supposed to be equal on each substantial 
$n$-crossing, we deduce that \emph{each} coordinate passes through the top of tower $n$ between two substantial 
$n$-crossings. As this happens at most once for each coordinate on $\ci_n$, we see that there are at most two substantial
$n$-crossings. Finally, from the first part of the lemma it follows that two consecutive substantial $n$-crossings are 
separated by at most $(d+2)\eta h_n$ points. We deduce  that, on any 
substantial $n$-crossing, \eqref{eq:synchronized} holds, hence each substantial $n$-crossing
is of size at least $(1-(d+2)\eta) h_n$.
\end{proof}

\begin{remark}
  The preceding lemma extends easily to the case when $n_{(\ell-1)}\le n\le n_\ell-\ell$. Indeed, when  $n_{(\ell-1)}+1\le n\le n_\ell-\ell$
  the proof is even simpler, as two successive passages in $C_n$ are now separated by at most one. 
\end{remark}

\begin{prop}
  \label{prop:synchronized}
  The measure $\sigma$ is a graph measure arising from powers of $T$ if and only if for each large enough $n$, all substantial $n$-crossings are synchronized.
\end{prop}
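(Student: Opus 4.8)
The plan is to prove the two implications separately, using Lemma~\ref{lemma:graph} (characterization~(iv)) for the ``if'' direction and the easier structure of graph measures for the ``only if'' direction.

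For the ``only if'' direction, suppose $\sigma$ is a graph measure arising from powers of $T$. By Lemma~\ref{lemma:graph}, condition~(iii) holds: there exists $\nb$ such that for all $n\ge\nb$, $t_n(x_1)=\cdots=t_n(x_d)$. I want to upgrade ``at $0$'' to ``on the whole substantial $n$-crossing through $0$'', and in fact to all substantial $n$-crossings. Since (ii) of Lemma~\ref{lemma:graph} holds, write $x_i=T^{e_i}x_1$; then for every $j\in\ZZ$ we also have $(\td)^jx_i=T^{e_i}(T^jx_1)$, so the same argument as in the proof of (ii)$\Rightarrow$(iii) shows that whenever $j_n(T^jx_1)$ is at distance more than $\max|e_i|$ from both ends of tower $n$, we get $t_n(T^jx_i)=t_n(T^jx_1)$ for all $i$. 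For $n$ large (so that $h_{n-k(n)}>\max|e_i|$, using $\eta h_n$ large), on any substantial $n$-crossing the coordinate $T^jx_1$ stays well inside tower $n$ except possibly near at most one end; but a substantial $n$-crossing has at least $\eta h_n$ points in $\ci_n$, and the argument in Lemma~\ref{lemma:substantialncrossings} shows that, once $n$ is large enough, every substantial $n$-crossing actually has size $\ge(1-(d+2)\eta)h_n$ provided synchronization holds---so I instead argue directly: at all but finitely many positions of each $n$-crossing the offsets $e_i$ are harmless, hence $t_n$ agrees on the interior, and by maximality of $n$-crossings and the fact that the $t_n(T^jx_i)$ are constant on an $n$-crossing, they must agree everywhere on it. Thus all $n$-crossings, in particular all substantial ones, are synchronized for $n$ large.

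For the ``if'' direction, assume that for each large enough $n$, all substantial $n$-crossings are synchronized. By Lemma~\ref{lemma:substantialncrossings} applied with $n=n_{(\ell-1)}$ for large $\ell$, there is at least one substantial $n_{(\ell-1)}$-crossing, say $J_\ell$, and (since all substantial ones are synchronized) it has size $\ge(1-(d+2)\eta)h_{n_{(\ell-1)}}$; pick any $j_\ell\in J_\ell$. Synchronization of $J_\ell$ means $t_{n_{(\ell-1)}}(T^{j_\ell}x_1)=\cdots=t_{n_{(\ell-1)}}(T^{j_\ell}x_d)$. The difficulty is that $j_\ell$ and the level $n_{(\ell-1)}$ both move with $\ell$, whereas Lemma~\ref{lemma:graph}(iv) wants a \emph{fixed} $j$ and a \emph{fixed} threshold $\nb$ beyond which synchronization holds at all $n$. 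To bridge this, I would first note that $J_\ell\subset\ci_{n_{(\ell-1)}}$ has size comparable to $h_{n_{(\ell-1)}}$, so $|j_\ell|\le h_{n_{(\ell-1)}}/2$; a substantial $n_{(\ell-1)}$-crossing, being this long, forces $T^{j_\ell}x_i$ to sit deep in tower $n_{(\ell-1)}$ for suitable choice of $j_\ell\in J_\ell$ (take $j_\ell$ near the center of $J_\ell$), so that the common value of $t_{n_{(\ell-1)}}$ propagates upward: by~\eqref{eq:s_n} and the compatibility maps, if $t_m(T^{j_\ell}x_1)=\cdots=t_m(T^{j_\ell}x_d)$ and all coordinates are interior at level $m$, then the relative offsets $j_m(T^{j_\ell}x_i)-j_m(T^{j_\ell}x_1)$ are preserved passing to level $m+1$. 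Combined with $x_i\in X_\infty$ (so $j_m(T^{j_\ell}x_i)\to\infty$ and $h_m-j_m(T^{j_\ell}x_i)\to\infty$), this gives that $t_m(T^{j_\ell}x_1)=\cdots=t_m(T^{j_\ell}x_d)$ for \emph{all} $m\ge n_{(\ell-1)}$ once $\ell$ is large. That is precisely Lemma~\ref{lemma:graph}(iv) with $j=j_\ell$ and $\nb=n_{(\ell-1)}$, hence $\sigma$ is a graph measure arising from powers of $T$.

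The main obstacle is the propagation step in the ``if'' direction: turning synchronization of $t_{n_{(\ell-1)}}$ at a single well-chosen point $j_\ell$ into synchronization at all higher levels $m\ge n_{(\ell-1)}$. The key inputs are that a \emph{substantial} crossing (under the synchronization hypothesis) is genuinely long---of size $\ge(1-(d+2)\eta)h_n$---so one can choose $j_\ell$ with all coordinates $T^{j_\ell}x_i$ lying in the central portion of tower $n_{(\ell-1)}$, far from both the bottom and the top; that $x_i\in X_\infty$ keeps every coordinate interior at all sufficiently high levels; and that~\eqref{eq:induction_for_j_n} (together with its $n_\ell$-variant) governs exactly how $j_{m+1}$ is built from $j_m$ and $t_m$, so equal $t_m$-values and interiority force equal increments. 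One must be slightly careful at the special steps $m=n_\ell$, where $h_m$ is replaced by $h_{n_\ell}+h_{n_\ell-k(\ell)}$ in the formula, but the same monotonicity conclusion holds there.
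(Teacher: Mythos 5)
Your ``only if'' direction is essentially the paper's argument: using $x_i=T^{e_i}x_1$, find one position $\jb$ in the given substantial $n$-crossing at which $j_n(T^{\jb}x_1)$ is far from both ends of tower~$n$, deduce $t_n(T^{\jb}x_i)=t_n(T^{\jb}x_1)$, and conclude by constancy of $t_n$ on the crossing. That half is fine.

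The ``if'' direction, however, has a genuine gap in the propagation step. You argue: equal $t_m$-values at $j_\ell$ plus interiority imply that the offsets $j_{m+1}(T^{j_\ell}x_i)-j_{m+1}(T^{j_\ell}x_1)$ coincide with the offsets at level $m$, and from this you conclude $t_{m+1}(T^{j_\ell}x_1)=\cdots=t_{m+1}(T^{j_\ell}x_d)$. The second step does not follow. The value $t_{m+1}(y)$ records which subcolumn of tower~$m+1$ the point $y$ lies in, i.e.\ its position relative to tower~$m+2$; it is \emph{not} determined by $j_{m+1}(y)$. Two points can occupy the very same level of tower~$m+1$ (offset $0$) and still sit in different subcolumns of tower~$m+1$, so that their $t_{m+1}$-values differ. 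Equivalently, by \eqref{eq:induction_for_j_n}, equality of the $t_{m+1}$-values is equivalent to the offsets being preserved from level $m+1$ to level $m+2$ --- which is exactly what you would need to prove at the next step, not what you have established. Your induction is therefore circular: you invoke the synchronization hypothesis only once, at level $n_{(\ell-1)}$, and synchronization at a single level cannot be pushed upward by itself.

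The fix --- and this is what the paper does --- is to re-invoke the hypothesis at \emph{every} level. One builds inductively the $m$-crossings $J_m$ containing a fixed point of $J_{\nb}$: since $J_m$ is synchronized, of size at least $(1-(d+2)\eta)h_m$, and contained in $\ci_{m+1}$, the unique $(m+1)$-crossing $J_{m+1}$ extending it satisfies $|J_{m+1}\cap\ci_{m+1}|\ge|J_m|\ge\eta h_{m+1}$, hence is \emph{substantial}; the hypothesis applied at level $m+1$ then gives that $J_{m+1}$ is synchronized, and only after that does \eqref{eq:induction_for_j_n} (offset preservation) yield $|J_{m+1}|\ge(1-(d+2)\eta)h_{m+1}$, which keeps the induction alive. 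Your proof needs this step --- showing the crossing through the chosen point remains substantial at each level and applying the hypothesis there --- to go through.
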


\begin{proof}
  First assume that $\sigma$ is a graph measure arising from powers of $T$. 
  Then by Lemma~\ref{lemma:graph}, we know that there exists $e_2,\ldots,e_d\in\ZZ$ such that $x_i=T^{e_i}x_1$ for each $i=2,\ldots,d$.
  Take $n$ large enough so that $\max\{|e_2|,\ldots,|e_d|\}<\eta h_n$. Let $J$ be a substantial $n$-crossing. In particular the size of $J$
  is at least $\eta h_n$. Hence there exists $\jb\in J$ such that 
  $\eta h_n\le j_n(T^{\jb} x_1) \le (1-\eta)h_n$. We deduce that $j_n(T^{\jb} x_i)= j_n(T^{\jb} x_1)+e_i$ for each $i=2,\ldots,d$. But we also have 
  $\eta h_n\le j_{n+1}(T^{\jb} x_1) \le (1-\eta)h_n$ and this ensures that  $j_{n+1}(T^{\jb} x_i)= j_{n+1}(T^{\jb} x_1)+e_i$.
  By~\eqref{eq:induction_for_j_n},  the equality $j_{n+1}(T^{\jb} x_i) - j_{n+1}(T^{\jb} x_1) = j_{n}(T^{\jb} x_i) - j_{n}(T^{\jb} x_1)$
  implies $t_n(T^{\jb} x_i)=t_n(T^{\jb} x_1)$. Finally, as $j\mapsto t_n(T^jx_i)$
  is constant on the $n$-crossing $J$, we see that $J$ is synchronized. 
  
  Conversely, assume that there exists $\nb$ such that for each $n\ge\nb$, all substantial $n$-crossings are synchronized. Without loss of generality, we may 
  assume that $\nb$ is of the form $n_{(\ell-1)}$, for some $\ell$ large enough to apply Lemma~\ref{lemma:substantialncrossings}. Then we know that there
  exists at least one substantial $\nb$-crossing $J_{\nb}$, of size at least $(1-(d+2)\eta)h_{\nb}$. 
  For $j\in J_{\nb}$ and for each $i=2,\ldots,d$, $|j_{\nb}(T^j x_i)-j_{\nb}(T^j x_1)|\le (d+2)\eta h_{\nb}$.
  Let us prove by induction  that for each  $n\ge\nb$, there exists 
  a substantial $n$-crossing $J_n$, of size at least $(1-(d+2)\eta)h_n$, and containing $J_{\nb}$. We already know that this property is true for $\nb$. 
  Assume it is true up to $n$ for some $n\ge\nb$. Then, the $n$-crossing $J_n$ extends to a unique $(n+1)$-crossing $J_{n+1}$. 
  As $J_n$ intersects $\ci_n$ and is of size at most $h_n$, $J_n\subset \ci_{n+1}$. It follows that 
  \[
    \left|J_{n+1}\cap  \ci_{n+1}\right| \ge \left| J_n \right| \ge (1-(d+2)\eta)h_n \ge \eta h_{n+1},
  \]
  which proves that $J_{n+1}$ is a substantial $(n+1)$-crossing. 
  Moreover, since the size of $J_n$ is  at least $(1-(d+2)\eta)h_n$, we have for $j\in J_n$ and each $i=2,\ldots,d$ $|j_n(T^j x_i)-j_n(T^j x_1)|\le (d+2)\eta h_{n}$.
  But $J_n$ is synchronized, hence by~\eqref{eq:induction_for_j_n}, we have for $j\in J_n$
  \[
    |j_{n+1}(T^j x_i)-j_{n+1}(T^j x_1)| = |j_n(T^j x_i)-j_n(T^j x_1)| \le (d+2)\eta h_{n} \le (d+2)\eta h_{n+1}.
  \]
  This equality extends to $j\in J_{n+1}$ since the difference is constant on an $(n+1)$-crossing.
This proves that the size of $J_{n+1}$ is at least $(1-(d+2)\eta)h_{n+1}$. 
Now if we take any $j\in J_{\nb}$, we have $j\in J_n$ for each $n\ge\nb$, and since we assumed that each substantial $n$-crossing is synchronized, 
we have $t_n(T^jx_1)=\cdots=t_n(T^jx_d)$, \textit{i.e.} we have (iv) of Lemma~\ref{lemma:graph}. This proves that $\sigma$ is a graph measure arising from powers of $T$.
\end{proof}

\begin{remark}
\label{remark:sizes}
  In the preceding proof, the induction provides in fact a stronger inequality for the sizes of the substantial $n$-crossings $(J_n)$: 
  $|J_n|\ge h_n-(d+2)\eta h_{\nb}$.
\end{remark}

\section{Combinatorics of some sets of integers}
\label{sec:combinatorics}
The purpose of this section is to establish Proposition~\ref{prop:combinatorics_of_ncrossings} on the 
combinatorics of the set of integers $j$ such that $(\td)^j x\in \cd_n$ for a 
given large $n$.

%

\begin{prop}
  \label{prop:combinatorics_of_ncrossings}
  There exist constants $K_1>0$ and $K_2>0$ such that, for any large enough integer $\lb$, 
and any integer $1\le c\le h_{n_{\lb}}$, the following holds: 
  if $I\subset \ZZ$ is an interval contained in an $n_{(\lb+\ell)}$-crossing for 
some $\ell\ge1$, and if the length of $I$ is at least $\eta h_{n_{(\lb+\ell-1)}}$,
  then
  \begin{itemize}
    \item the proportion of integers $j\in I$ such that 
$(\td)^jx\in\cd_{n_{\lb}}$ is at least $(1-\eta)^{2\ell}$;
    \item among all the integers $j\in I$ such that $(\td)^jx\in\cd_{n_{\lb}}$, 
the proportion of those belonging to an $n_{\lb}$-crossing of size $\le c$ 
is 
   bounded above by $$K_1 \frac{c}{h_{n_{\lb}}} + \frac{K_2}{3^{\lb}}.$$
  \end{itemize}
\end{prop}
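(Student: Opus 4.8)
The plan is to prove both bullets by downward induction on the "levels" $n_{\lb}, n_{\lb+1}, \ldots, n_{\lb+\ell}$, peeling off one scale at a time. I would first observe that the key mechanism is Remark~\ref{remark:01spacer}: between two consecutive occurrences of tower $m$ inside tower $m+1$ (for $m$ not of the form $n_{\ell'}$) there are $0$ or $1$ spacers, while at the special steps $m=n_{\ell'}$ there are about $h_{m-k(\ell')}$ spacers between occurrences. So when $I$ sits inside a single $n_{(\lb+\ell)}$-crossing and we ask which $j\in I$ have $(\td)^jx\in\cd_{n_{(\lb+\ell-1)}}$, each of the $d$ coordinates, restricted to $I$, runs monotonically through occurrences of tower $n_{(\lb+\ell-1)}$ inside tower $n_{(\lb+\ell)}$, and the only "lost" integers are those falling in spacer gaps. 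Since $I$ is contained in an $n_{(\lb+\ell)}$-crossing (so each $t_m(T^jx_i)$ is constant on $I$ for $n_{(\lb+\ell-1)}\le m<n_{(\lb+\ell)}$), the set of $j\in I$ with $(\td)^jx\notin\cd_{n_{(\lb+\ell-1)}}$ is a union of at most $d$ spacer-blocks, each of length at most roughly $h_{n_{(\lb+\ell-1)}-k}+1$; using a growth/smallness assumption analogous to~\eqref{eq:n'n} and the hypothesis $|I|\ge \eta h_{n_{(\lb+\ell-1)}}$, I would bound the proportion of lost integers by $\eta$ (or by some quantity summing to give $(1-\eta)$ per scale). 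This gives the first bullet by multiplying over the $\ell$ scales, with the extra factor $(1-\eta)$ per scale absorbed into $(1-\eta)^{2\ell}$ to leave room for the accounting below.

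For the second bullet, I would again induct over scales but now track, at each scale $m$, the proportion of surviving integers that lie in a \emph{short} $m$-crossing. Fix a surviving integer $j$ (i.e. $(\td)^jx\in\cd_{n_{\lb}}$) and let $J(j)$ be the $n_{\lb}$-crossing containing it. The point is: an $n_{\lb}$-crossing is short (size $\le c$) only if one of the $d$ coordinates is near the bottom or near the top of tower $n_{\lb}$ at both ends of $J(j)$ — more precisely, within the $n_{(\lb+\ell)}$-crossing containing $I$, for each coordinate $i$ the values $j_{n_{\lb}}(T^jx_i)$, as $j$ ranges over the surviving integers, sweep through all of $\{0,\ldots,h_{n_{\lb}}-1\}$ repeatedly in a controlled (nearly periodic) fashion; the integers belonging to short $n_{\lb}$-crossings are exactly those where some coordinate is within distance $c$ of $0$ or of $h_{n_{\lb}}-1$. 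The density of such "edge" positions for a single coordinate is of order $c/h_{n_{\lb}}$. Summing over $d$ coordinates gives the $K_1 c/h_{n_{\lb}}$ term. The $K_2/3^{\lb}$ correction term is there to handle the boundary effects of $I$ (the two incomplete sweeps at the ends of $I$) and the accumulated distortion from the spacer-gaps across the $\ell$ intermediate scales; here I would use that $I$ is long ($\ge \eta h_{n_{(\lb+\ell-1)}}$, which is $\ge \eta h_{n_{\lb}}$) so that $I$ contains at least on the order of $\eta$ full sweeps through tower $n_{\lb}$, making the relative boundary error $O(1/(\eta \cdot \#\text{sweeps}))$, bounded by $O(1/3^{\lb})$ after using the growth of $(h_n)$ and condition~\eqref{eq:condition_nl}.

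The main obstacle, I expect, is the bookkeeping in the second bullet when an intermediate scale $m$ is of the special form $n_{\ell'}$ (so that the gaps between occurrences of tower $m$ are of size $\asymp h_{m-k(\ell')}$, not $\le 1$): one must check that even though many integers are lost to spacers at such a scale, the \emph{conditional} distribution of the $n_{\lb}$-position of the surviving coordinates remains close to uniform, so that the $c/h_{n_{\lb}}$ edge-density estimate still holds with only a multiplicative constant $K_1$ and an additive $K_2/3^{\lb}$ slack. This requires carefully propagating the near-uniformity of $j_{n_{\lb}}(T^jx_i)$ through each $p_m$, using property~\eqref{eq:prop_pn} to guarantee that, away from the $O(h_{n_{\lb}})$-neighbourhoods of the tower boundaries, consecutive levels of tower $m$ map to consecutive levels of tower $m-1$, hence the image of a long arithmetic-progression-like set of positions is again such a set; the constants $K_1,K_2$ are then extracted by making all these estimates uniform in $\lb$ (for $\lb$ large) and in the chosen $n_{(\lb+\ell)}$-crossing. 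I would finish by noting that the constants do not depend on $\ell$ because the per-scale loss is multiplicative and already accounted for in the $(1-\eta)^{2\ell}$ factor of the first bullet, while the short-crossing proportion at scale $n_{\lb}$ depends only on the final scale and the length of $I$ relative to $h_{n_{\lb}}$, not on how many scales were traversed.
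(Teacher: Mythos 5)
There is a genuine gap, and it sits at the heart of the second bullet. Your plan bounds the proportion of $j$ lying in short $n_{\lb}$-crossings by an ``edge-density'' count: positions where some coordinate $i$ has $j_{n_{\lb}}(T^jx_i)$ within $c$ of the top of tower~$n_{\lb}$. That count is naturally proportional to $|F_i\cap I|\cdot c/h_{n_{\lb}}$, where $F_i=\{j:T^jx_i\in C_{n_{\lb}}\}$ (each complete sweep of coordinate $i$ through tower~$n_{\lb}$ contributes exactly $c$ edge positions), whereas the proportion in the statement is taken relative to $|F\cap I|$ with $F=\bigcap_iF_i$. The ratio $|F_i\cap I|/|F\cap I|$ is \emph{not} bounded uniformly in $\ell$: at each special scale $n_{\ell'}$ the coordinates enter the long spacer blocks out of phase, so the density of $F$ in $I$ degrades as $\ell$ grows --- the only available lower bound is the $(1-\eta)^{2\ell}$ of the first bullet, and the true density can indeed tend to $0$ with $\ell$ since $\mu(C_{n_{\lb}})/\mu(C_{n_{(\lb+\ell)}})\to0$. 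A flat normalization by $|F\cap I|$ therefore yields a constant that blows up with $\ell$, while the Proposition requires $K_1,K_2$ independent of $\ell$. Your closing claim that the constants ``do not depend on $\ell$'' is precisely what must be proved, and the sketch does not deliver it; you yourself flag the propagation of near-uniformity as ``the main obstacle'' without resolving it.

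The paper's proof needs no equidistribution at all. It proves an abstract multi-scale lemma (Lemma~\ref{lemma:combinatorics}) for sets ``of order $\ell$'': points of $F\cap I$ in pieces of size $\le c$ are bounded by $c$ times the number of pieces, the number of pieces is controlled by the number of holes of the $F_i$, and --- crucially --- the count is performed \emph{recursively}, comparing at each scale to $|F\cap J|$ for $J$ a long piece one scale down, where the density of $F$ costs only a single factor $(1-\eta)$. The cross-scale losses carry a factor $(1-\eta)^{-2\ell'}$ but also a factor $c_{\ell'-1}/c_{\ell'}=h_{n_{(\lb+\ell'-2)}}/h_{n_{(\lb+\ell'-1)}}<3^{-(\lb+\ell'-1)}$, so they sum to the $K_2/3^{\lb}$ term; without this bookkeeping the uniformity in $\ell$ is lost. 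A smaller error: containment of $I$ in an $n_{(\lb+\ell)}$-crossing does \emph{not} make $t_m(T^jx_i)$ constant for intermediate $m$, and the lost set at one scale is a union of roughly $d\,|I|/h_{n_{(\lb+\ell-1)}}$ spacer blocks, not of at most $d$ of them. Your density bound for the first bullet survives because consecutive holes are separated by pieces of length $h_{n_{(\lb+\ell-1)}}$, but the miscount becomes fatal once the number of crossings enters the second bullet.
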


For this we will introduce a hierarchy of more and more complex subsets of 
$\ZZ$,  prove by induction some combinatorial results on abstract sets in this 
hierarchy,
and finally show how to apply these results in the particular case we are 
interested in.

\subsection{A hierarchy of subsets of $\ZZ$}
\label{sec:hierarchy}
This part of the argument is completely abstract and independent of the rest of 
the paper, but we keep the notations $d$ (an integer, $d\ge2$) and $\eta$
(a positive real number between 0 and 1).
We set 
\[
  K_1\egdef \dfrac{1+\frac{2}{\eta}}{1-\eta}d.
\]

We  fix two  sequences of positive integers $\left(c_\ell\right)_{\ell\ge1}$ and 
$\left(s_\ell\right)_{\ell\ge1}$, satisfying
\begin{equation}
  \label{eq:cs_cond1}
  \forall \ell\ge1,\ \frac{s_\ell}{c_\ell}<\frac{1}{d}\frac{\eta}{\eta+1}\eta,
\end{equation}
and 
\begin{equation}
  \label{eq:cs_cond2}
  \forall \ell\ge1,\ \frac{c_\ell}{c_{\ell+1}}<\frac{\eta}{K_1}.
\end{equation}

 Let $F\subset\ZZ$, and let $I\subset\ZZ$ be an interval. We call
\emph{piece} of $F\cap I$ any maximal interval included in $F\cap I$, and we 
call \emph{hole} of $F\cap I$ any maximal interval included in $I\setminus F$.
(Thus, $I$ is the disjoint union of the pieces and the holes of $F\cap I$, which 
alternate.)

We say that \emph{$F$ is of order 1 inside the interval $I$} if 
\begin{itemize}
  \item each hole of $F\cap I$ is of size $\le s_1$,
  \item two consecutive holes of $F\cap I$ are always separated by a piece of 
size at least $c_1$.
\end{itemize}

\begin{figure}
  \includegraphics[width=8cm]{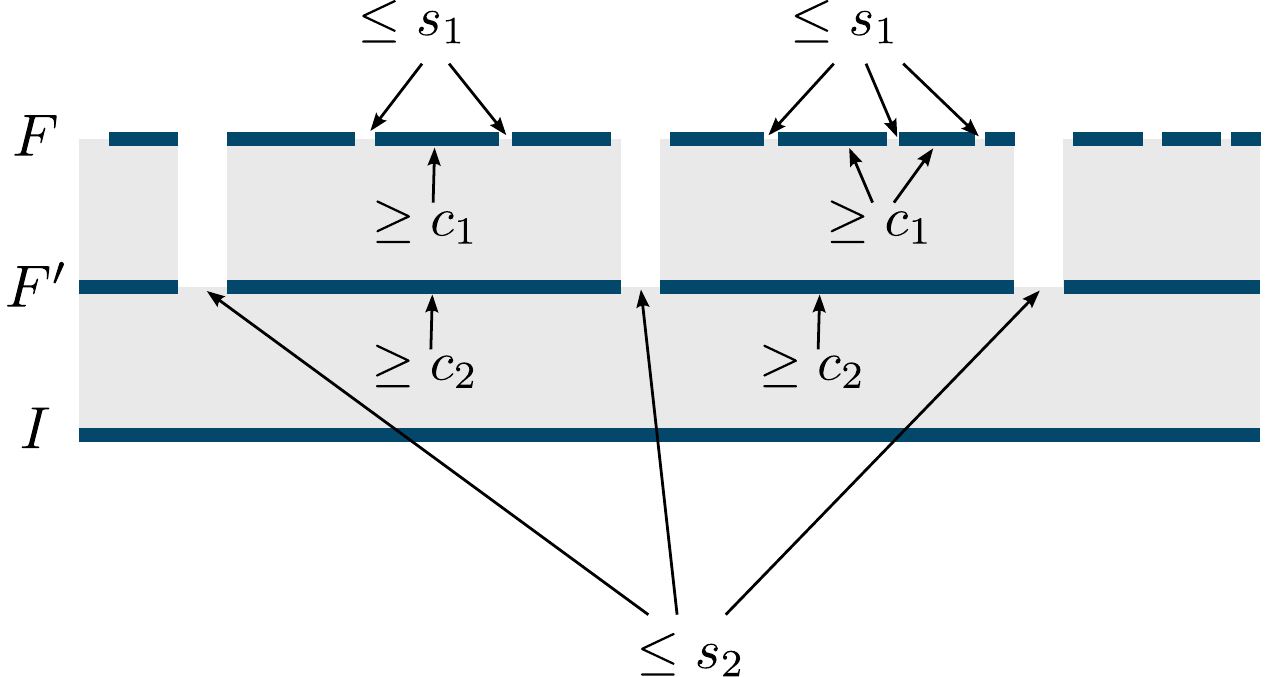}
\caption{A set $F$ of order 2 inside an interval $I$.}
\label{fig:order}
\end{figure} 

Recursively, we say that \emph{$F$ is of order $\ell\ge2$ inside the interval 
$I$} if there exists a subset $F'\subset \ZZ$ such that
\begin{itemize}
  \item $F \subset F'$,  
  \item each hole of $F'\cap I$ is of size $\le s_\ell$,
  \item two consecutive holes of $F'\cap I$ are always separated by a piece of 
size at least $c_\ell$,
  \item for each piece $I'$ of $F'\cap I$, $F$ is of order $(\ell-1)$ 
inside~$I'$.
\end{itemize}
(See Figure~\ref{fig:order}.)
Note that, if $F$ is of order $\ell$ inside the interval $I$, then $F$ is of 
order $\ell$ inside each subinterval $J\subset I$.

\begin{lemma}
  \label{lemma:combinatorics}
  Let $F_1,\ldots,F_d$ be $d$ subsets of $\ZZ$, and let $I\subset\ZZ$ be an 
interval. Assume that for some $\ell\ge1$, $F_i$ is of order $\ell$ inside $I$ 
for each $i=1,\ldots,d$, 
  and that the size of $I$ is at least $\eta c_\ell$. Set $F\egdef\bigcap_{i=1}^d 
F_i$.
  Then
  \begin{itemize}
    \item the density of $F$ inside $I$ satisfies
    \begin{equation}
      \label{eq:density_of_F}
      \frac{|F\cap I|}{|I|} \ge (1-\eta)^{2\ell},
    \end{equation}
  \item for a given integer $c$, $1\le c<h_1$, the proportion of integers in 
$F\cap I$ lying in pieces of $F\cap I$ with size $\le c$
  is bounded above by 
  \[
    K_1\left( \frac{c}{c_1} +   \frac{c_1}{c_2} \frac{1}{(1-\eta)^4} + \cdots + 
\frac{c_{\ell-1}}{c_\ell} \frac{1}{(1-\eta)^{2\ell}}\right)
  \]
  \end{itemize}

\end{lemma}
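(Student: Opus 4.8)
The plan is to prove both bullet points by induction on $\ell$, the order of the sets $F_i$.

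\medskip

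\textbf{Setup and base case.} For $\ell=1$, each $F_i$ has all holes inside $I$ of size $\le s_1$ and consecutive holes separated by pieces of size $\ge c_1$. The key estimate is that inside any interval $J$ of length $\ge \eta c_1$, the complement $J\setminus F_i$ has density at most $\eta$: indeed, a hole contributes at most $s_1$, and each hole after the first is ``paid for'' by a preceding piece of length $\ge c_1$ inside $F_i$; counting carefully (the first hole may have no preceding piece in $J$, which is where the $\eta c_1$ lower bound on $|J|$ and condition~\eqref{eq:cs_cond1} are used), one gets $|J\setminus F_i|/|J|\le s_1/c_1 \cdot(1+\text{something})\le \eta$ by~\eqref{eq:cs_cond1}. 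Hence $|I\setminus F_i|\le \eta|I|$ for each $i$, so $|I\setminus F|\le d\eta|I|$... but we need $(1-\eta)^2$, so in fact the right bound per level is that $|J\setminus F_i|/|J| \le \eta$, and then $|F\cap I|/|I| \ge 1 - d\eta$; since $\eta < 1/(100d)$, this is $\ge 1-\eta \ge (1-\eta)^2$. For the second bullet with $\ell=1$: the pieces of $F\cap I$ of size $\le c$ are obtained by intersecting the $d$ families of pieces; a piece of $F\cap I$ of size $\le c$ is contained in a piece of some $F_i$, and within a single piece $P$ of $F_i$ (which has length $\ge c_1$), the ``small pieces of $F\cap I$ inside $P$'' are separated by holes of the other $F_{i'}$, each of which within $P$ sits in a window where $F_{i'}$ has density $\ge 1-\eta$; a Chebyshev/counting argument bounds the fraction of $P$ covered by pieces of $F\cap I$ of size $\le c$ by roughly $\frac{1}{1-\eta}\bigl(\frac{c}{c_1} + \frac{s_1}{c_1}\bigr)d \le K_1 c/c_1$, using $\eta c_1 \le$ length and $s_1/c_1$ small.

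\medskip

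\textbf{Inductive step.} Suppose the statement holds for $\ell-1$. Given $F_i$ of order $\ell$ inside $I$, pick witnessing sets $F_i'\supset F_i$ as in the definition. First replace $I$ by the common refinement: intersect all the piece-partitions of the $F_i'\cap I$. On the ``large'' pieces of this refinement — those of length $\ge \eta c_{\ell-1}$ — each $F_i$ is of order $\ell-1$ (restriction property noted after the definition) and we apply the induction hypothesis. For the density bound: by the base-case-type argument applied at level $\ell$, each $F_i'$ has density $\ge 1-\eta$ inside $I$ (using $|I|\ge \eta c_\ell$ and~\eqref{eq:cs_cond1}), so $\bigcap_i F_i'$ has density $\ge 1-d\eta \ge (1-\eta)^2$ inside $I$ — actually we want the multiplicative structure, so argue: density of $F$ in $I$ $\ge$ (density of $\bigcap F_i'$ in $I$) $\times$ (minimal density of $F$ in a large piece of $\bigcap F_i'$) $\ge (1-\eta)^2 \cdot (1-\eta)^{2(\ell-1)}$, after checking that the small pieces and holes of $\bigcap F_i'$ contribute negligibly. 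The small-piece bound for $F\cap I$: a piece of $F\cap I$ of size $\le c$ either lies inside a large piece $P$ of $\bigcap_i F_i'$ — in which case apply the induction hypothesis inside $P$ to get the bound $K_1(\frac{c}{c_1}+\cdots+\frac{c_{\ell-2}}{c_{\ell-1}}\frac{1}{(1-\eta)^{2(\ell-1)}})$ as a proportion of $F\cap P$, then sum over $P$ weighted by $|F\cap P|/|F\cap I|$ — or it meets a hole/small piece of $\bigcap_i F_i'$, whose total measure inside $I$ is $\le K_1\frac{c_{\ell-1}}{c_\ell}$-ish times $|I|$ by the level-$\ell$ analogue of the base-case second bullet (this is the new term $\frac{c_{\ell-1}}{c_\ell}\frac{1}{(1-\eta)^{2\ell}}$, where the $(1-\eta)^{-2\ell}$ converts ``proportion of $I$'' to ``proportion of $F\cap I$''). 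Combining gives exactly the claimed telescoping sum, and~\eqref{eq:cs_cond2} guarantees the series behaves (each ratio $c_{j}/c_{j+1}$ is small enough that the bound is meaningful).

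\medskip

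\textbf{Main obstacle.} The delicate point is the bookkeeping in the second bullet at the inductive step: one must carefully separate the pieces of $F\cap I$ of size $\le c$ into those internal to a large level-$\ell$ piece (handled by induction, but the induction bound is a \emph{proportion of $F$ restricted to that piece}, so the weighting when summing up must be done with density ratios that are all within $[(1-\eta)^{2\ell},1]$, producing the $\frac{1}{(1-\eta)^{2\ell}}$ factors) versus those near the boundaries/holes of the level-$\ell$ structure. Getting the conversion factors between ``proportion of $I$'', ``proportion of a piece'', and ``proportion of $F\cap I$'' exactly right — so that the constant stays $K_1$ and the powers of $(1-\eta)$ land as stated — is where essentially all the work lies; conditions~\eqref{eq:cs_cond1} and~\eqref{eq:cs_cond2} are precisely what is needed to absorb the cross-terms and close the induction. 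The density bound~\eqref{eq:density_of_F}, by contrast, is comparatively routine once the base-case density lemma is in hand.
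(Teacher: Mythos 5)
Your proposal follows the same route as the paper: induction on $\ell$, with the base case handled by counting holes and pieces of the $F_i\cap I$, and the inductive step passing through the witnessing sets $F_i'$, their intersection, and the union of its large pieces, on which the induction hypothesis is applied while the remaining points are controlled by the level-$\ell$ version of the base-case count. The overall bookkeeping you describe for the second bullet (induction on large pieces, plus a new term $\frac{c_{\ell-1}}{c_\ell}(1-\eta)^{-2\ell}$ coming from the points outside the large pieces) is exactly the paper's.

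Two local steps as written would fail and need the repair that the paper actually uses. First, the chains ``$\ge 1-d\eta\ge 1-\eta$'' and ``$\ge 1-d\eta\ge(1-\eta)^2$'' are false for every $d\ge2$ (the first needs $d\le1$, the second $d\le 2-\eta$). The fix is that condition~\eqref{eq:cs_cond1} carries a factor $1/d$ precisely so that the \emph{per-coordinate} complement density is below $\eta/d$, not $\eta$: $|I\setminus F_i|\le s_1(|I|/c_1+1)\le(1+1/\eta)(s_1/c_1)\,|I|<(\eta/d)|I|$, so the union bound over the $d$ coordinates lands at $\eta$ and one only needs $1-\eta\ge(1-\eta)^2$. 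In the inductive step the second factor of $(1-\eta)$ per level then comes from discarding the small pieces of $\bigcap_iF_i'$, not from the intersection density itself. Second, your base-case bound for the small-piece proportion, roughly $\frac{d}{1-\eta}\left(\frac{c}{c_1}+\frac{s_1}{c_1}\right)$, does not reduce to $K_1c/c_1$ when $s_1\gg c$ (and $c$ may be as small as $1$ while $s_1$ is huge); the correct argument is global rather than piece-by-piece: the number of pieces of $F\cap I$ is at most $1+\sum_ik_i\le d(1+2/\eta)|I|/c_1$, where $k_i\le|I|/c_1+1$ is the number of holes of $F_i\cap I$, each small piece contributes at most $c$ points, and dividing by $|F\cap I|\ge(1-\eta)|I|$ gives exactly $K_1c/c_1$ with the paper's value of $K_1$. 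With these two repairs your argument closes and coincides with the paper's proof.
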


\begin{proof}
  Let us first establish the result for $\ell=1$. We assume that each $F_i$ is 
of order~1 inside $I$, and that $|I|\ge \eta c_1$. 
For each $i=1,\ldots,d$, let $k_i$ be the number of holes of $F_i\cap I$. Then 
 by definition of order~1, there are at least $k_i-1$ pieces of $F_i\cap I$ 
with size at least $c_1$, whence $c_1(k_i-1)\le |I|$, and
\[ k_i\le |I|/c_1+1.\]
  Since each hole of $F_i\cap I$ has size $\le s_1$, we deduce that the 
cardinality of $I\setminus F_i$ is bounded by $s_1(|I|/c_1+1)$.
  This yields by the inequality $1\le \frac{1}{\eta}\frac{|I|}{c_1}$ and~\eqref{eq:cs_cond1}:
  \[ |I\setminus F|   \le ds_1(|I|/c_1+1) \le d\left(1+\frac{1}{\eta}\right)s_1|I|/c_1 \le \eta |I|. \]
  We thus get $|F\cap I|/|I| \ge 1-\eta\ge (1-\eta)^{2}$, which is the first 
point. Moreover, the number $k$ of holes of $F\cap I$ satisfies
  $k\le k_1+\cdots+k_d\le d|I|/c_1+d$, whence the number $m$ of pieces of $F\cap 
I$ satisfies 
  \[ m \le d|I|/c_1+d+1  \le d|I|/c_1+2d \le d\left(1+\frac{2}{\eta}\right)|I|/c_1. \] 
  It follows that the number $r$ of points of $F\cap I$ lying 
  in a piece of size $\le c$ satisfies
  \[ r\le mc \le d\left(1+\frac{2}{\eta}\right)|I| \frac{c}{c_1}.\]
  As we already know that $|F\cap I|\ge (1-\eta)|I|$, we get by definition of $K_1$
  \[
    \frac{r}{|F\cap I|} \le \frac{\left(1+\frac{2}{\eta}\right)}{1-\eta} d \frac{c}{c_1} = K_1  
\frac{c}{c_1},
  \]
  which establishes the second point for $\ell=1$.
  
  Now we assume by induction that the result is true up to $\ell-1$ for some 
$\ell\ge2$ and we consider a family $(F_i)_{1\le i\le d}$ of subsets of $\ZZ$, 
  which are of order $\ell$ inside an interval $I$ satisfying $|I|\ge \eta c_\ell$. 
By definition of order $\ell$, for each $i$ there exists a subset 
$F'_i\subset\ZZ$ satisfying
  \begin{itemize}
  \item $F_i\subset F'_i$,
  \item each hole of $F'_i\cap I$ is of size $\le s_\ell$,
  \item two consecutive holes of $F'_i\cap I$ are always separated by a piece of 
size at least $c_\ell$,
  \item for each piece $I'$ of $F'_i\cap I$, $F'_i$ is of order $(\ell-1)$ 
inside~$I'$.
\end{itemize}
Since $|I|\ge \eta c_\ell$, the argument developped for order~1 applies for $F'\egdef \bigcap_{i=1}^d F'_i$ 
(with $(c_{\ell-1},c_\ell,s_\ell)$ in place of $(c,c_1,s_1)$). We thus get
 \begin{equation}
   \label{eq:size_of_F'}
   |F'\cap I| \ge (1-\eta)|I|,
 \end{equation}
 and denoting by $r'$ the number of points of $F'\cap I$ lying in pieces of 
$F'\cap I$ of size  $<c_{\ell-1}$, we have (using 
also~\eqref{eq:cs_cond2})
  \begin{equation}
    \label{eq:r'} \frac{r'}{|F'\cap I|} \le  K_1  \frac{c_{\ell-1}}{c_\ell}<\eta.
  \end{equation}
Let $G$ stand for the union of all pieces of $F'\cap I$ of size $\ge c_{\ell-1}$. 
The above inequality can be rewritten as
\begin{equation}
  \label{eq:size_of_G}
  \frac{|G|}{|F'\cap I|} > (1-\eta).
\end{equation}
Let $J$ be an arbitrary piece of $G$.
Since for each $i$, $F_i$ is of order $\ell-1$ inside $J$, and by definition of 
$G$, $|J|\ge c_{\ell-1}\ge \eta c_{\ell-1}$,
the induction hypothesis gives  
\[\frac{|F\cap J|}{|J|} \ge (1-\eta)^{2{\ell-2}}.\]
Summing over all pieces of $G$ we get, using also~\eqref{eq:size_of_G} 
and~\eqref{eq:size_of_F'}
\begin{equation}
  \label{eq:density}
  |F\cap I|\ge |F\cap G| \ge (1-\eta)^{2{\ell-2}} |G| \ge (1-\eta)^{2\ell}|I|,
\end{equation}
which is the first point at order $\ell$.

Moreover, if $r_J$ denotes the number of points of $F\cap J$ lying in pieces of 
$F\cap J$ of size smaller than $c$, then
  \[
    \frac{r_J}{|F\cap J|} \le   K_1\left( \frac{c}{c_1} +   \frac{c_1}{c_2} 
\frac{1}{(1-\eta)^4} + \cdots + \frac{c_{\ell-2}}{c_{\ell-1}} 
\frac{1}{(1-\eta)^{2{\ell-2}}}\right).
  \]
Now let us denote by $r$ the number of points of $F\cap I$ lying in pieces of 
$F\cap I$ of size smaller than $c$. The contribution to $r$ of points in $G$
is $\sum_J r_J$ (where the sum ranges over all pieces $J$ of $G$), and by the 
previous inequality, it satisfies
\begin{align*}
  \sum_J r_J & \le K_1\left( \frac{c}{c_1} +   \frac{c_1}{c_2} \frac{1}{(1-\eta)^4} + \cdots + \frac{c_{\ell-2}}{c_{\ell-1}} 
\frac{1}{(1-\eta)^{2{\ell-2}}}\right) |F\cap G| \\
& \le K_1\left( \frac{c}{c_1} +   \frac{c_1}{c_2} 
\frac{1}{(1-\eta)^4} + \cdots + \frac{c_{\ell-2}}{c_{\ell-1}} 
\frac{1}{(1-\eta)^{2{\ell-2}}}\right) |F\cap I|.
\end{align*}
 The contribution to $r$ of points in $F\setminus G$ is clearly at most $|F\setminus G|$, which can be bounded above
 as follows
 \begin{align*}
   |(F\cap I)\setminus G| & \le |(F'\cap I)\setminus G| \qquad\text{(because $F\subset F'$)}\\
   & = r'  \qquad\text{(by definition of  $G$ and $r'$)}\\
   & \le    K_1  \frac{c_{\ell-1}}{c_\ell}|F'\cap I| \qquad\text{(by \eqref{eq:r'})}\\
   & \le    K_1  \frac{c_{\ell-1}}{c_\ell}|I| \\
   & \le    K_1  \frac{c_{\ell-1}}{c_\ell} \frac{|F\cap I|}{(1-\eta)^{2\ell}}  \qquad\text{(by \eqref{eq:density})}
 \end{align*}
Summing the two contributions and using the above inequalities, we get 
\[
  r \le K_1\left( \frac{c}{c_1} +   \frac{c_1}{c_2} 
\frac{1}{(1-\eta)^4} + \cdots + \frac{c_{\ell-2}}{c_{\ell-1}} 
\frac{1}{(1-\eta)^{2{\ell-2}}}  + \frac{c_{\ell-1}}{c_\ell} \frac{1}{(1-\eta)^{2\ell}}  \right) |F\cap I|,
\]
which is the second point at order $\ell$.
\end{proof}

\subsection{Application to the structure of $n$-crossings}
\label{sec:application}
We want now to apply the preceding lemma in order to obtain some statistical 
results on long range of successive $n$-crossings.
We fix some integer $\lb$, large enough to satisfy some conditions to be 
precised later, and we set $\kb\egdef k(\lb)$
We define the sequences 
${(c_\ell)}_{\ell\ge1}$ and ${(s_\ell)}_{\ell\ge1}$ as follows.
\begin{itemize}
  \item $c_1\egdef h_{n_{\lb}}$,
  \item $s_1\egdef h_{(n_{\lb}-\kb)}+1$,
  \item in general, $c_\ell\egdef h_{n_{(\lb+\ell-1)}}$, and $s_\ell \egdef 
h_{n_{(\lb+\ell-1)}-k(\lb+\ell-1)} + 1$.
\end{itemize}

Using the fact that we always have $h_n/h_{n+1}<1/3$, we observe that for each 
$\ell\ge1$, with $k\egdef k(\lb+\ell-1)$,
\[
  \frac{s_\ell}{c_\ell} = \frac{h_{n_{(\lb+\ell-1)}-k} + 
1}{h_{n_{(\lb+\ell-1)}}} < 2 \frac{h_{n_{(\lb+\ell-1)}-k}}{h_{n_{(\lb+\ell-1)}}} 
< \frac{2}{3^k} \le \frac{2}{3^{\kb}}.
\]
Hence \eqref{eq:cs_cond1} is satisfied if $\lb$ is large enough. The fact that  
\eqref{eq:cs_cond2} holds if $\lb$ is large enough follows from the following 
easy consequence of~\eqref{eq:condition_nl}:
\[
  \frac{h_{n_\ell}}{h_{n_{\ell+1}}} < \frac{ h_{n_{(\ell+1)}}-(\ell+1) }{ 
h_{n_{(\ell+1)}} } < \frac{1}{3^{\ell+1}} \tend{\ell}{\infty} 0.
\]
We can therefore assume that $\lb$ is large enough so that both 
\eqref{eq:cs_cond1} and \eqref{eq:cs_cond2} hold.

We want to apply Lemma~\ref{lemma:combinatorics} to the subsets $F_i$ 
($i=1,\ldots,d$) defined by
\[
  F_i \egdef \left\{j\in\ZZ: T^j x_i \in C_{n_{\lb}} \right\}.
\]

Let $I\subset\ZZ$ be an interval, $n\ge1$ and $i\in\{1,\ldots,d\}$. We say that 
\emph{$x_i$ climbs into tower $n$ along $I$} if
for each $j\in I$, $T^jx_i\in C_n$, and there is no $j\in I$ such that $j+1\in 
I$, $T^jx_i\in L_n^{h_n-1}$ and $T^{j+1}x_i\in L_n^{0}$.
Note that $I$ is included in an $n$-crossing if and only if each coordinate 
$x_i$ climbs into tower $n$ along $I$.

\begin{lemma}
\label{lemma:order}
  For each interval $I\subset\ZZ$ and each $i\in\{1,\ldots,d\}$, if $x_i$ climbs 
into tower~$n_{(\lb+\ell)}$ along $I$, then $F_i$ is of order~$\ell$ inside $I$.
\end{lemma}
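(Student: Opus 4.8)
I would proceed by induction on $\ell$, following closely the recursive structure of the definition of "order $\ell$." Fix $i$ and an interval $I$ along which $x_i$ climbs into tower $n_{(\lb+\ell)}$. The set $F_i\cap I$ records the times $j\in I$ at which $T^jx_i$ lies in $C_{n_{\lb}}$, so its holes correspond exactly to the excursions of the orbit of $x_i$ above tower $n_{\lb}$ (but still inside tower $n_{(\lb+\ell)}$).

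\textbf{Base case $\ell=1$.} Here $x_i$ climbs into tower $n_{(\lb+1)}$ along $I$. An occurrence of tower $n_{\lb}$ inside tower $n_{(\lb+1)}$ is separated from the next one by some spacers; by Remark~\ref{remark:01spacer} applied at level $n=n_{\lb}$ (using $n_{(\lb+1)}$ in the role of $n_\ell$), between two consecutive occurrences of tower $n_{\lb}$ inside tower $n_{(\lb+1)}$ there are $0$ or $1$ spacers — hence more generally, when the orbit leaves the top of tower $n_{\lb}$ and returns to its base, the gap has size at most $h_{(n_{\lb}-\kb)}+1 = s_1$, the bound coming from the cumulative effect of the construction between steps $n_{\lb}$ and $n_{(\lb+1)}$: the only obstruction to a return after one step is the block of $h_{n_{\lb}-\kb}$ spacers added at step $n_{\lb}+1$ (if $n_{\lb}$ happens to be some $n_\ell$) or at most $1$ spacer otherwise, and this happens at most once. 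So each hole of $F_i\cap I$ has size $\le s_1$. For the second condition, when the orbit enters the bottom of a fresh occurrence of tower $n_{\lb}$ and climbs (it does not wrap around, by the "climbs" hypothesis) it spends at least $h_{n_{\lb}}=c_1$ consecutive steps inside $C_{n_{\lb}}$ before reaching the top and possibly leaving; so two consecutive holes are separated by a piece of size $\ge c_1$. This is exactly order $1$.

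\textbf{Inductive step.} Suppose the result holds for $\ell-1$, and assume $x_i$ climbs into tower $n_{(\lb+\ell)}$ along $I$. I take $F_i'\egdef\{j\in\ZZ: T^jx_i\in C_{n_{(\lb+\ell-1)}}\}$. Since $F_i\subset F_i'$ (because $C_{n_{\lb}}\subset C_{n_{(\lb+\ell-1)}}$), the first bullet of the order-$\ell$ definition holds. Exactly as in the base case but one level up: the holes of $F_i'\cap I$ are the excursions above tower $n_{(\lb+\ell-1)}$ inside tower $n_{(\lb+\ell)}$, of size $\le h_{n_{(\lb+\ell-1)}-k(\lb+\ell-1)}+1=s_\ell$, and two consecutive such holes are separated by a full traversal of an occurrence of tower $n_{(\lb+\ell-1)}$, hence a piece of size $\ge h_{n_{(\lb+\ell-1)}}=c_\ell$. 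Finally, for each piece $I'$ of $F_i'\cap I$, the orbit of $x_i$ stays inside $C_{n_{(\lb+\ell-1)}}$ throughout $I'$ and, by maximality of $I'$ as a piece, does not wrap from top to bottom of tower $n_{(\lb+\ell-1)}$ within $I'$ — that is, $x_i$ climbs into tower $n_{(\lb+\ell-1)}$ along $I'$. The induction hypothesis then says $F_i$ is of order $\ell-1$ inside $I'$, which is the last bullet. Hence $F_i$ is of order $\ell$ inside $I$.

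\textbf{Main obstacle.} The delicate point is the bookkeeping on hole sizes: verifying that every excursion of the orbit above tower $n$ (for $n=n_{\lb}$ in the base case, $n=n_{(\lb+\ell-1)}$ in the step) and back to its base has length at most $s=h_{n-k}+1$, where $k$ is the relevant index $k(\cdot)$. This requires carefully translating the cutting-and-stacking description — in particular, the rule that at step $n_\ell+1$ one adds $h_{n_\ell-k(\ell)}$ spacers above each subcolumn plus one more above the middle — into a statement about how long the orbit can stay outside $C_n$ between two consecutive visits to $C_n$, and checking this bad excursion occurs at most once inside the larger tower (so that it does not interfere with the "pieces of size $\ge c_\ell$" requirement). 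I would phrase this as: between two consecutive occurrences of tower $n$ inside tower $n'$ ($n<n'$, with $n'$ of the relevant form), the number of spacers is at most $h_{n-k}$, plus possibly a single extra spacer, which is subsumed in the ``$+1$''; and the "climbs" hypothesis rules out the pathological case of the orbit cycling through the top of tower $n$ and returning to its bottom within a single piece. Everything else is a transcription of the order-$\ell$ definition.
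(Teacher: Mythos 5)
Your proof is correct and follows essentially the same route as the paper's: induction on $\ell$, taking $F_i'$ to be the set of return times to $C_{n_{(\lb+\ell-1)}}$, and using that consecutive occurrences of tower $n_{(\lb+\ell-1)}$ inside tower $n_{(\lb+\ell)}$ are separated by $h_{n_{(\lb+\ell-1)}-k}$ or $h_{n_{(\lb+\ell-1)}-k}+1$ spacers. One small slip in your base case: Remark~\ref{remark:01spacer} does \emph{not} apply to $n=n_{\lb}$ (its range starts at $n_{(\ell-1)}+1$ precisely because the big spacer block is added at step $n_{\lb}+1$), so the gaps are never $0$ or $1$ but always $h_{n_{\lb}-\kb}$ or $h_{n_{\lb}-\kb}+1$ --- fortunately you then derive the correct bound $s_1=h_{n_{\lb}-\kb}+1$ by the right argument anyway, so the proof stands.
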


\begin{proof}
By construction of the Nearly Finite Chacon Transformation, two successive 
occurrences of tower~$n_{\lb}$ inside tower~$n_{(\lb+1)}$ are separated
either by $h_{n_{\lb}-\kb}$ or by $h_{n_{\lb}-\kb}+1$ spacers. Hence, if $x_i$ 
climbs into tower $n_{(\lb+1)}$ along $I$,
$F_i$ is of order~1 inside $I$. This proves the lemma in the case $\ell=1$.

Assume that the statement of the lemma is true up to $\ell-1$ for some 
$\ell\ge2$. We consider 
\[
  F'_i \egdef \left\{j\in\ZZ: T^j x_i \in C_{n_{(\lb+\ell-1)}} \right\}.
\]
We clearly have $F_i\subset F'_i$.

Two successive occurrences of tower~$n_{(\lb+\ell-1)}$ inside 
tower~$n_{(\lb+\ell)}$ are separated
either by $h_{n_{(\lb+\ell-1)}-k}$ or by $h_{n_{(\lb+\ell-1)}-k}+1$ spacers, 
where $k$ is determined by $\ell_k\le \lb+\ell-1 <\ell_{k+1}$. Hence, if $x_i$ 
climbs into tower $n_{(\lb+\ell)}$ along $I$,
each hole of $F'_i\cap I$ is of size $\le h_{n_{(\lb+\ell-1)}-k}+1=s_\ell$, 
and two consecutive holes of $F'_i\cap I$ are separated by a piece of $F'_i\cap 
I$ of size 
$h_{n_{(\lb+\ell-1)}}=c_\ell$. Moreover, along each piece of $F'_i\cap I$, $x_i$ 
climbs into tower~$n_{(\lb+\ell-1)}$. Therefore the property for $\ell-1$ 
ensures
that $F_i$ is of order $\ell-1$ inside each piece of $F'_i\cap I$. It follows that 
$F_i$ is of order $\ell$ inside I, and the lemma is proved by induction.
\end{proof}

\begin{proof}[Proof of Proposition~\ref{prop:combinatorics_of_ncrossings}]
With the subsets $F_i$ defined as above, we see that 
\[ 
    F\egdef\bigcap_{1\le i\le d}F_i = \left\{j\in\ZZ: (\td)^j x \in 
\cd_{n_{\lb}} \right\}.
\]
Observe that the pieces of $F$ are precisely the $n_{\lb}$-crossings.

Assume that the interval $I\subset\ZZ$ is included in an 
$n_{(\lb+\ell)}$-crossing for some $\ell\ge1$ (remember that this is equivalent 
to: each coordinate $x_i$ climbs into tower $n_{(\lb+\ell)}$ along $I$).
Then, putting together Lemma~\ref{lemma:combinatorics} and 
Lemma~\ref{lemma:order}, and provided that the length of $I$ be at least 
$\eta c_\ell=\eta h_{n_{(\lb+\ell-1)}}$, we get:
\begin{itemize}
  \item the proportion of $j\in I$ such that $(\td)^j x \in \cd_{n_{\lb}}$ is at 
least $(1-\eta)^{2\ell}$,
  \item for each $1\le c\le h_{n_{\lb}}$, the proportion of $j\in F\cap I$ 
belonging to an $n_{\lb}$-crossing of size $\le c$ is bounded above by
  \begin{equation}
  \label{eq:density_of_small_ncrossings}
     K_1 \left(
    \frac{c}{h_{n_{\lb}}} + \frac{ h_{n_{\lb}} }{ h_{n_{(\lb+1)}} } 
\frac{1}{(1-\eta)^4} + \cdots
    + \frac{ h_{n_{(\lb+\ell-2)}} }{ h_{n_{(\lb+\ell-1)}} } 
\frac{1}{(1-\eta)^{2\ell}}
    \right).
  \end{equation}
\end{itemize}
Let us estimate the general term of the above sum, using the inequality 
$h_{n_\ell}/h_{n_{(\ell+1)}}<1/3^{\ell+1}$, and the assumption
$(1-\eta)^2>1/2$.
\begin{align*}
   \frac{ h_{n_{(\lb+\ell-2)}} }{ h_{n_{(\lb+\ell-1)}} } 
\frac{1}{(1-\eta)^{2\ell}} 
            & < \frac{1}{3^{(\lb+\ell-1)}}  \frac{1}{(1-\eta)^{2\ell}}\\
            & =  \frac{1}{3^{(\lb -1)}}  \frac{1}{\left( 3 (1-\eta)^2 
\right)^\ell}\\
            & < \frac{1}{3^{(\lb -1)}} \left(\frac{2}{3}\right)^\ell.
\end{align*}
It follows that there exist a constant $K_2$ such that 
\eqref{eq:density_of_small_ncrossings}$\displaystyle \le K_1 
\frac{c}{h_{n_{\lb}}} + \frac{K_2}{3^{\lb}}$. 
\end{proof}

\subsection{Measure of the edge of $\cd_n$}
\label{sec:edge}
For each $n\ge0$, we say that an $n$-box $L_n^{j_1}\times\cdots\times L_n^{j_d}$ is \emph{on the edge of $\cd_n$} if 
there exists $i\in\{1,\ldots,d\}$ such that $j_i=0$ or $j_i=h_n-1$. We denote by $\partial \cd_n$ the union of all such 
$n$-boxes. 

As a first application of Proposition~\ref{prop:combinatorics_of_ncrossings}, we have the following result.

\begin{corollary}
  \label{cor:edge}
  \[
    \delta(n)\egdef \dfrac{\sigma(\partial \cd_n)}{\sigma(\cd_n)} \tend{n}{\infty} 0.
  \]
\end{corollary}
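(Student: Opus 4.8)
The idea is to use the typical point $x$ and Hopf's ratio ergodic theorem to convert the ratio $\sigma(\partial\cd_n)/\sigma(\cd_n)$ into a statement about the asymptotic density, along the orbit of $x$, of those times $j$ for which $(\td)^jx$ lies in $\partial\cd_n$ among those times for which $(\td)^jx\in\cd_n$. Since $x$ is typical, for $n$ such that $\sigma(\cd_n)>0$ we have
\[
  \delta(n)=\frac{\sigma(\partial\cd_n)}{\sigma(\cd_n)}
  =\lim_{k\to\infty}\frac{\sum_{|j|\le k}\ind{\partial\cd_n}((\td)^jx)}{\sum_{|j|\le k}\ind{\cd_n}((\td)^jx)}.
\]
So it suffices to show that, as $n\to\infty$, this limiting density tends to $0$ uniformly in the choice of the window, which is exactly the kind of control provided by Proposition~\ref{prop:combinatorics_of_ncrossings}.

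First I would fix $\lb$ large and work with $n=n_{\lb}$ (this is harmless: $\delta$ need not be monotone, but it is enough to prove $\delta(n_{\lb})\to0$ together with an easy comparison $\delta(n)\le C\,\delta(n_{\lb})$ for $n_{\lb}\le n<n_{\lb+1}$, or simply to observe that the same argument runs verbatim with $n_{\lb}$ replaced by any $n$, using the general hierarchy). Observe that if $j$ is a time with $(\td)^jx\in\partial\cd_n$, then some coordinate $T^jx_i$ is in the first or last level of tower~$n$; hence $j$ is within distance $1$ of an endpoint of the $n$-crossing containing it, so $j$ lies in an $n$-crossing and is one of its two extreme points — \emph{provided} that $n$-crossing does not itself start/end because of that coordinate, but in any case: the number of times $j\in\partial\cd_n$ inside a window is at most a bounded multiple of the number of distinct $n_{\lb}$-crossings meeting that window. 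Thus the proportion of $j$ with $(\td)^jx\in\partial\cd_{n_\lb}$, among all $j$ with $(\td)^jx\in\cd_{n_\lb}$, is controlled by the proportion of times lying in \emph{short} $n_{\lb}$-crossings plus a term coming from the (few) long crossings, each of which contributes only its two endpoints.

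More precisely, I would split the window $J$ into large sub-intervals, each contained in an $n_{(\lb+\ell)}$-crossing for a suitably chosen $\ell$ (using Lemma~\ref{lemma:substantialncrossings} to guarantee that, for $J$ long enough, $\ci_n$-type intervals are covered by $n_{(\lb+\ell)}$-crossings of length $\ge\eta h_{n_{(\lb+\ell-1)}}$), and apply Proposition~\ref{prop:combinatorics_of_ncrossings} on each piece. Choosing first $c$ a fixed constant, the second bullet of that proposition gives that the proportion of $j\in F\cap I$ belonging to an $n_{\lb}$-crossing of size $\le c$ is at most $K_1 c/h_{n_{\lb}}+K_2/3^{\lb}$; the remaining $j\in\partial\cd_{n_\lb}$ must lie in $n_{\lb}$-crossings of size $>c$, and there are at most $|F\cap I|/c$ such crossings (well-separated inside $I$), each contributing $\le 2d$ boundary points (the at most two extreme points for each of the $d$ coordinates), giving a further proportion $\le 2d/c$. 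Altogether
\[
  \delta(n_\lb)\le K_1\frac{c}{h_{n_{\lb}}}+\frac{K_2}{3^{\lb}}+\frac{2d}{c}.
\]
Now let $\lb\to\infty$ and then optimize $c$: take $c=c(\lb)\to\infty$ with $c/h_{n_{\lb}}\to0$ (e.g.\ $c=\sqrt{h_{n_\lb}}$); all three terms tend to $0$, so $\delta(n_\lb)\to0$, and since every $n$ is sandwiched between consecutive $n_\lb$'s with $\delta$ comparable (or, more simply, since the whole argument applies to any $n$ directly by running the hierarchy from level $n$ rather than from an $n_\lb$), we conclude $\delta(n)\to0$.

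The main obstacle I expect is the bookkeeping needed to legitimately pass from the Hopf limit over symmetric windows $\{|j|\le k\}$ to the density estimates of Proposition~\ref{prop:combinatorics_of_ncrossings}, which are phrased for intervals contained in a single $n_{(\lb+\ell)}$-crossing: one must chop the long window into such pieces, check that all but a negligible fraction of the window is covered by pieces long enough ($\ge\eta h_{n_{(\lb+\ell-1)}}$) to which the proposition applies (this is where Lemma~\ref{lemma:substantialncrossings} and Remark~1.6 do the work), and verify that the error from the boundary pieces of the decomposition vanishes in the limit $k\to\infty$. The counting of boundary times via "two endpoints per long crossing per coordinate" is elementary but must be stated carefully because a single $n_{\lb}$-crossing can touch $\partial\cd_{n_\lb}$ at both ends.
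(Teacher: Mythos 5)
Your argument is correct and is essentially the paper's own proof (which the authors themselves give only as a sketch): typicality plus Hopf's theorem to express $\delta(n)$ as an orbit density, the observation that the times $j$ with $(\td)^jx\in\partial\cd_n$ are precisely the (at most two) endpoints of the $n$-crossings, and Proposition~\ref{prop:combinatorics_of_ncrossings} with $c=c(\lb)\to\infty$, $c/h_{n_{\lb}}\to0$ to show that all but a vanishing proportion of the orbit time in $\cd_n$ is spent in crossings of size $>c$. The one step you leave loose — passing from $n=n_{\lb}$ to arbitrary $n$ (the hierarchy constants $c_1,s_1$ are tied to $n_{\lb}$, so the argument is not literally ``verbatim'' for general $n$) — is glossed over in exactly the same way in the paper's sketch, and only the subsequence $n=n_{\lb}$ is actually needed where the corollary is invoked, so this does not affect correctness.
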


\begin{proof}[sketch of proof]
  This is a direct consequence of the following facts:
  \begin{itemize}
    \item Since $x$ is typical for $\sigma$, the quotient $\delta(n)$ can be estimated by the ratio 
    \[
      \dfrac{\sum_{j\in I}\ind{\partial \cd_n} (\td)^j x}{\sum_{j\in I}\ind{\cd_n} (\td)^j x}
    \]
  for a large interval $I$ containing 0.
  \item The subset of $j\in \ZZ$ such that $(\td)^jx\in \cd_n$ is partitioned into $n$-crossings, and in 
  each $n$-crossing $J$ there are exactly two integers $j$  (the minimum and the maximum of $J$) such that $(\td)^jx\in\partial\cd_n$.
  \item By Proposition~\ref{prop:combinatorics_of_ncrossings}, most $n$-crossings are large if $n$ is large.    
  \end{itemize}
\end{proof}

\section{Convergence of sequences of empirical measures}
\label{sec:convergence}
For each finite subset $J\subset\ZZ$, we denote by $\gamma_J$ the \emph{empirical 
measure}
\[
 \gamma_J \egdef \sum_{j\in I} \delta_{(\td)^j x}.
\]
The validity of Property~\eqref{eq:Hopf}  whenever $B$ is an $n$-box and $C$ is 
$\cd_n$ (remember that $x$ has been chosen as a typical point) means that, if
$(J_n)$ is a sequence of intervals containing $0$, with 
$|J_n|\tend{n}{\infty}\infty$, then we have
the convergence $\gamma_{J_n}\tend{n}{\infty} \sigma$.

Our purpose in this section is to extend this convergence to the case where the 
intervals $J_n$ do not necessarily contain 0, but are \emph{not too far} from 0.
We will also treat the case where the subsets $J_n$ are no longer intervals, but union of intervals
with a sufficiently regular structure.

We fix a real number $\varepsilon>0$, small enough so that 
$(1-\varepsilon)^2>1-\eta$. Then we consider an integer $c\ge1$, large enough so 
that 
$\frac{c-1}{c}>1-\varepsilon$.

In Sections~\ref{sec:4.1} and \ref{sec:4.2}, we consider a fixed integer $\underline{\ell}$, 
large enough so that the result of 
Proposition~\ref{prop:combinatorics_of_ncrossings} holds. We can also assume 
that
\begin{equation}
 \label{eq:choice_of_lb}
 K_1 \frac{c}{h_{n_{\lb}}} + \frac{K_2}{3^{\lb}} < \varepsilon.
\end{equation}
We are going to estimate the behaviour of empirical measures with respect to 
$n_{\lb}$-boxes. 
The following lemmas are devoted to the control of 
\[
  \Gam{I} = \sum_{j\in I} \ind{\cd_{n_{\lb}}} \left( (\td)^j(x)\right) 
\]
 for particular intervals $I$.

\subsection{Consecutive $n$-intervals}
\label{sec:4.1}
For $n\ge1$, we call \emph{$n$-interval} any interval 
$I=\{j,j+1,\ldots,j+h_n-1\}$ of length $h_n$ and such that $j$ is a multiple of $h_n$.
(The second condition is completely artificial, it is only useful to define canonically a cutting of any
interval into intervals of length $h_n$.)

\begin{lemma}
\label{lemma:consecutive_n_intervals} 
 Let $p_1$ be the smallest integer such that $3^{p_1}>2d+1$ and $p_1>d$. There exists a constant $0<\theta_1<1$
 (depending only on $\eta$ and $d$) for which the following holds.
 
 Let $\ell>\lb+1$, and let $n$ be such that  
 $n_{(\ell-1)}-k(\ell-1)+p_1\le n< n_\ell$.
 Whenever $I_1$ and $I_2$ are two consecutive 
 $n$-intervals, both contained in the same $n_\ell$-crossing, we have 
 \[ 
   \theta_1 \Gam{I_1} < \Gam{I_2} < \frac{1}{\theta_1} \Gam{I_1}. 
 \]
\end{lemma}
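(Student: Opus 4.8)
The statement is a two-sided comparison of the number of visits to $\cd_{n_\lb}$ made by the orbit of $x$ along two consecutive $n$-intervals $I_1, I_2$ lying in a common $n_\ell$-crossing. My plan is to control each $\Gam{I_r}$ from above and below by applying Proposition~\ref{prop:combinatorics_of_ncrossings}. Since $I_1$ and $I_2$ both lie inside an $n_\ell$-crossing, and since $n_\ell$-crossings are a fortiori contained in $n_{(\lb+(\ell-\lb))}$-crossings, each $I_r$ is contained in an $n_{(\lb+m)}$-crossing for the appropriate index $m = \ell - \lb$ (or we work at level $n$ directly: each $I_r$ has length $h_n$, and the hypothesis $n \ge n_{(\ell-1)}-k(\ell-1)+p_1$ will be used to ensure $h_n$ is at least $\eta h_{n_{(\lb+\ell-\lb-1)}} = \eta c_{\ell-\lb}$, or whichever intermediate bound is needed to invoke the proposition). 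The first bullet of Proposition~\ref{prop:combinatorics_of_ncrossings} then gives a \emph{lower} bound: the proportion of $j\in I_r$ with $(\td)^jx\in\cd_{n_\lb}$ is at least $(1-\eta)^{2(\ell-\lb)}$, hence $\Gam{I_r}\ge (1-\eta)^{2(\ell-\lb)}\,h_n$.

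For the \emph{upper} bound on, say, $\Gam{I_2}$, I would use that $\Gam{I_2}\le h_n$ trivially — but that alone gives a ratio $h_n/\bigl((1-\eta)^{2(\ell-\lb)}h_n\bigr) = (1-\eta)^{-2(\ell-\lb)}$, which depends on $\ell-\lb$ and so is \emph{not} a constant $\theta_1$ depending only on $\eta$ and $d$. So the crude bound is not enough, and this is the crux of the argument. The right fix is to compare both $I_1$ and $I_2$ to a \emph{common larger scale}. Take $I := I_1 \cup I_2$ (or a slightly larger interval of length a fixed multiple of $h_n$, say $3^{p_1}$ consecutive $n$-intervals, still inside the same $n_\ell$-crossing — this is where $p_1$ and the condition $n \ge n_{(\ell-1)}-k(\ell-1)+p_1$ enter, guaranteeing there is enough room and that consecutive occurrences of tower $n$ inside tower $n_\ell$ are separated by few spacers). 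The key structural point is that the relative density of $\cd_{n_\lb}$-visits inside any two $n$-intervals contained in a common occurrence of a slightly larger tower cannot differ by more than a bounded factor, because the pattern of $n_\lb$-occurrences inside tower $n$ (and inside tower $n+1$, etc., up to $n_\ell$) is \emph{self-similar up to spacers}: each $n$-interval sees either a full occurrence of tower $n$ or one of the three subcolumns thereof inside tower $n+1$, and the number of $n_\lb$-levels per occurrence of tower $n$ is the same up to the bounded spacer corrections quantified in Remark~\ref{remark:01spacer} and the first bullet of the proposition.

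Concretely, here is the cleanest route. Let $N_n$ denote the (fixed, $r$-independent) number of occurrences of tower $n_\lb$ inside one occurrence of tower $n$; by the construction each such occurrence contributes exactly $h_{n_\lb}$ integers to $F=\{j:(\td)^jx\in\cd_{n_\lb}\}$ when the orbit traverses it fully. An $n$-interval $I_r$ inside an $n_\ell$-crossing corresponds to the orbit of each coordinate climbing through one occurrence of tower $n$ (possibly split by at most one spacer between consecutive occurrences, by Remark~\ref{remark:01spacer} applied at the relevant levels), so $\Gam{I_r}$ equals $h_{n_\lb}$ times the number of tower-$n_\lb$-occurrences fully traversed, up to a correction of at most $d\cdot h_{n_\lb}$ for partial occurrences at the two ends, for each coordinate. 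Thus $\bigl|\Gam{I_r} - h_{n_\lb} N_n\bigr| \le C(d)\, h_{n_\lb}$ for a constant depending only on $d$, giving
\[
  h_{n_\lb} N_n - C(d)h_{n_\lb} \;\le\; \Gam{I_r} \;\le\; h_{n_\lb} N_n + C(d)h_{n_\lb}.
\]
Combined with the lower bound $\Gam{I_r}\ge (1-\eta)^{2(\ell-\lb)}h_n \ge \tfrac12\, h_n$ (using $(1-\eta)^2 > 1/2$ and, if needed, iterating or just using the first-bullet density at the single scale $\ell=\lb+1$ repeatedly), and the fact that $h_{n_\lb}N_n$ is comparable to $h_n$ up to a factor bounded in terms of $\eta$, one gets $\Gam{I_1}$ and $\Gam{I_2}$ both pinched between two fixed multiples of $h_n$ with constants depending only on $\eta$ and $d$. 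Taking $\theta_1$ to be the ratio of these constants finishes it.

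**Main obstacle.** The delicate point is ensuring the comparison constant is genuinely independent of $\ell$ (equivalently of $\ell-\lb$), i.e. that one does \emph{not} lose a factor $(1-\eta)^{-2(\ell-\lb)}$. This forces the argument to go through the ``absolute'' count $h_{n_\lb}N_n$ of tower-$n_\lb$-occurrences inside a single occurrence of tower $n$, rather than through relative densities at scale $n_\ell$. Making the bookkeeping of partial occurrences at the endpoints of $I_r$ precise — and checking that the endpoints of $I_1$ and of $I_2$ sit inside the same $n_\ell$-crossing so that no genuine ``wrap-around'' (orbit leaving and re-entering $C_n$ with many spacers) occurs between them, which is exactly what the hypotheses $n \ge n_{(\ell-1)}-k(\ell-1)+p_1$ and ``both in the same $n_\ell$-crossing'' guarantee — is where the real work lies.
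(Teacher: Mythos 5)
Your strategy is to bound each of $\Gam{I_1}$ and $\Gam{I_2}$ separately between two fixed multiples of $h_n$ and take $\theta_1$ to be the ratio of the constants. This cannot work, because $\Gam{I_r}/h_n\to 0$ as $\ell\to\infty$ for fixed $\lb$: the spacers inserted at the steps $n_j+1$ occupy a fraction of tower $n$ that tends to $1$ as $n\to\infty$ (equivalently, $3^{n-n_{\lb}}h_{n_{\lb}}/h_n\to 0$; this is forced by $\mu(C_{n_{\lb}})<\infty$ while $\mu(C_n)\to\infty$). Consequently both of your key quantitative claims are false: the lower bound ``$\Gam{I_r}\ge(1-\eta)^{2(\ell-\lb)}h_n\ge\frac12 h_n$'' fails because $(1-\eta)^{2(\ell-\lb)}\to0$ and no amount of ``iterating at a single scale'' avoids this (iterating is exactly what produces the exponent $2(\ell-\lb)$); and the estimate $|\Gam{I_r}-h_{n_{\lb}}N_n|\le C(d)h_{n_{\lb}}$ fails because the holes of $F=\bigcap_i F_i$ inside $I_r$ are not confined to the two endpoints --- they come from the spacers at \emph{every} intermediate scale between $n_{\lb}$ and $n$, for each of the $d$ coordinates with different offsets, and their total size is a non-vanishing (indeed, eventually dominant) fraction of $h_n$. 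The correct mechanism, which your proposal misses, is a \emph{direct} comparison of the two counts via the shift $j\mapsto j+h_n$: when $n>n_{(\ell-1)}$, inside an $n_\ell$-crossing each coordinate sees $0$ or $1$ spacer between consecutive occurrences of tower $n$, so $T^{h_n}$ moves each coordinate down by $0$ or $1$ levels of tower $n_{\lb}$; hence every $j\in I_1$ counted in $\Gam{I_1}$ for which no coordinate sits in the bottom level of tower $n_{\lb}$ is still counted in $\Gam{I_2}$. Proposition~\ref{prop:combinatorics_of_ncrossings} enters only to bound the exceptional proportion (points in short $n_{\lb}$-crossings, or at the bottom of their crossing) by $\eta$, giving $\Gam{I_2}\ge(1-\eta)\Gam{I_1}$ and, by symmetry, the reverse inequality.

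A second, independent gap is the range $n_{(\ell-1)}-k(\ell-1)+p_1\le n\le n_{(\ell-1)}$. Your assertion that the hypotheses guarantee ``consecutive occurrences of tower $n$ inside tower $n_\ell$ are separated by few spacers'' is false there: two consecutive occurrences can be separated by $h_{n'}$ or $h_{n'}+1$ spacers with $n'=n_{(\ell-1)}-k(\ell-1)$, a quantity comparable to $h_n/3^{p_1}$, so the shift by $h_n$ can land a coordinate deep inside a spacer block and the argument above breaks. This is exactly why $p_1$ is defined by $3^{p_1}>2d+1$ and why the paper introduces the fake tower $n'$, the fake $\tilde C_{n_{\lb}}$, and the ``suspect $n'$-interval'' bookkeeping (at most $2d$ suspect intervals, each chained to a non-suspect one by at most $2d$ applications of the first case) to reduce this case to the previous one. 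Your proposal does not address this case at all, and also cannot invoke Proposition~\ref{prop:combinatorics_of_ncrossings} directly on $I_r$ in this range, since $|I_r|=h_n$ can be far smaller than $\eta h_{n_{(\ell-1)}}$, violating the length hypothesis of that proposition.
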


\begin{proof}
We divide the proof into two cases.

\subsubsection*{Case 1: $n_{(\ell-1)}+1\le n\le n_\ell$}

Set $j_1\egdef \min I_1$ and $j_2\egdef\min I_2=j_1+h_n$. 
 Proposition~\ref{prop:combinatorics_of_ncrossings} applies to $I_1$, and this 
ensures that, 
 among the $\Gam{I_1}$ integers $j$ such that $(\td)^{j_1+j}x\in 
\cd_{n_{\lb}}$, a proportion at least $(1-\varepsilon)$ 
(by~\eqref{eq:choice_of_lb}) belong to an 
 $n_{\lb}$-crossing of size at least $c$. Then, among those belonging to an 
$n_{\lb}$-crossing of size at least $c$, a proportion at least $\frac{c-1}{c}$ 
 are not the minimum of their $n_{\lb}$-crossing. By the 
choice of $\varepsilon$ and $c$, we get the partial following result: 
 a proportion at least $1-\eta$ of integers $j\in\{0,\ldots,h_n-1\}$ are such 
that, for each $i=1,\ldots,d$, $T^{j_1+j}x_i\in C_{n_{\lb}}$, but 
$T^{j_1+j}x_i$ 
 is not in the bottom level of tower~$n_{\lb}$. Let us 
consider such an integer $j$. Observe that, since $I_2$ is in the same $n_\ell$-crossing as 
$I_1$, the coordinate $T^{j_1+j}x_i$ cannot be in the last occurrence of tower $n$ inside tower 
$n_{\ell}$. Hence it will pass through zero or one spacer before coming back to $C_{n}$.
Then we can use a similar argument as in the proof of Lemma~\ref{lemma:dissipative_case}:
according to whether coordinate $i$ sees a spacer or not, $T^{j_2+j}x_i=T^{j_1+j+h_n}x_i$ is either 
in the same level of tower $n$ as $T^{j_1+j}x_i$, or in the level immediately below.
And the same applies if we consider the levels of tower~$n_{\lb}$. Hence $T^{j_2+j}x_i\in C_{n_{\lb}}$.

This proves that $\Gam{I_2}\ge (1-\eta) \Gam{I_1}$. But we can do a similar reasoning starting from $I_2$ 
and going backwards, and we get the announced inequalities for any $0<\theta_1\le (1-\eta)$ .

\subsubsection*{Case 2: $n_{(\ell-1)}-k(\ell-1)+p_1\le  n\le n_{(\ell-1)}$}
To simplify the notations, we set $n'\egdef n_{(\ell-1)}-k(\ell-1)$. The reason why we cannot do the same reasoning as
in the previous case is the following: when for some $j$ the coordinate $T^jx_i$ leaves tower~$n$, it will
come back to $C_n$ after 0, 1, $h_{n'}$ or $h_{n'}+1$ spacers. Because of this huge number of spacers that might separate
two climbings into tower~$n$, we cannot be sure that $T^{j+h_n}x_i$ will be in $C_{n_{\lb}}$. 
To circumvent this difficulty, we introduce what we call the \emph{fake tower $n'$}: it is the Rokhlin tower of height $h_{n'}$ 
whose levels are the $h_{n'}$ spacers placed on top of tower~$n_{(\ell-1)}$ in the construction. Let us denote by
$\tilde L_{n'}^0,\ldots,\tilde L_{n'}^{h_{n'}-1}$ its consecutive levels. We note that this fake tower $n'$ is disjoint 
from $C_{n_{(\ell-1)}}$, \textit{a fortiori} it is disjoint from $C_{n_{\lb}}$. 
However we can construct a \emph{fake $C_{n_{\lb}}$} inside the fake tower $n'$ by mimicking the structure of $C_{n_{\lb}}$ inside
tower~$n'$. More precisely, we set
\[
  \tilde C_{n_{\lb}} \egdef \bigsqcup_{j:L_{n'}^j\subset C_{n_{\lb}}} \tilde L_{n'}^j, \qquad\text{and}\qquad
  \overline{C}_{n_{\lb}}\egdef C_{n_{\lb}}\sqcup \tilde C_{n_{\lb}}.
\]
If we consider $\overline{C}_{n_{\lb}}$ instead of $C_{n_{\lb}}$, then everything happens as if the $n$-intervals 
were both contained in a single $n_{(\ell-1)}$-crossing. Hence we can use Case~1 with $(\ell-1)$ in place of $\ell$, which
yields 
\begin{equation}
  \label{eq:case1withfake}
   (1-\eta) \Gamb{I_1} < \Gamb{I_2} < \frac{1}{(1-\eta)} \Gamb{I_1}. 
\end{equation}
It remains now to compare $\Gamb{I}$ with $\Gam{I}$ for $I=I_1$ or $I=I_2$. 

For this we will consider $n'$-intervals intersecting $I$. Let $J$ be such an $n'$-interval. We say that it is \emph{suspect}
if there exists $1\le i \le d$ and $j\in J$ such that $T^jx_i \notin C_{n_{(\ell-1)}}$.
Note that, by definition of a suspect interval, if $J\subset I$ is an $n'$-interval which is not suspect, then $\Gam{J}=\Gamb{J}$.
Observe also that, when $T^jx_i$ leaves $C_{n_{(\ell-1)}}$, then it comes back after at most $h_{n'}+1$ spacers 
(remember that everything takes place inside an $n_{\ell}$-crossing, therefore the coordinates do not leave $C_{n_\ell}$). Moreover,
when it comes back to $C_{n_{(\ell-1)}}$, it stays in $C_{n_{(\ell-1)}}$ for a time $\ge h_{n_{(\ell-1)}}\ge h_n$.
Since $|I|=h_n$, each coordinate $1\le i\le d$ is responsible for at most 2 suspect $n'$-intervals intersecting $I$, and we 
conclude that there exist at most $2d$ suspect $n'$-intervals intersecting $I$. Moreover, since we assumed that $n\ge n'+p_1$, 
we have $h_n/h_{n'}>3^{p_1}>2d+1$, and this ensures that there exists at least one $n'$-interval contained in $I$ which is not suspect.

Now if $J'$ is a suspect interval intersecting $I$, we can find a chain $J'=J'_0,J'_1,\ldots,J'_{r}=J$ of consecutive $n'$-intervals,
where $J'_0,\ldots,J'_{r-1}$ are suspect (hence $r\le 2d$), $J=J'_r$ is not suspect and contained in $I$.
Applying Case~1 $r\le 2d$ times (with $n'$ in place of $n$ and $\ell-1$ in place of $\ell$ gives
\[
 \Gam{I} \ge \Gam{J} = \Gamb{J} \ge (1-\eta)^{2d} \Gamb{J'} \ge (1-\eta)^{2d}\Gamt{J'}.
\]
Since only suspect intervals can contribute to $\Gamt{I}$, and since there are at most $2d$ of them, summing the preceding 
inequality over all suspect intervals $J'$ intersecting $I$  yields
\[
 2d \Gam{I} \ge (1-\eta)^{2d}\Gamt{I}.
\]
In other words,
\[
  \Gamt{I} \le \frac{2d}{(1-\eta)^{2d}} \Gam{I}.
\]
Adding $\Gam{I}$ on both sides,  we get
\[
\Gamb{I} \le \left(   \frac{2d}{(1-\eta)^{2d}} + 1 \right) \Gam{I}.
\]
Inserting the above inequality for $I=I_1$ in~\eqref{eq:case1withfake}, we get
\[
  \Gam{I_2} \le \Gamb{I_2} < \frac{1}{\theta_1} \Gam{I_1}
\]
with 
\[
  \theta_1\egdef (1-\eta) \left(   \frac{2d}{(1-\eta)^{2d}} + 1 \right)^{-1}.
\]
But we can exchange the roles of $I_1$ and $I_2$ and this gives the annouced result.
\end{proof}

\begin{remark}
\label{rem:extremities}
  Let $\ell$ and $n$ be as in Lemma~\ref{lemma:consecutive_n_intervals}.  Assume that $I_1$ and $I_2$
  are consecutive $n$-intervals, but only $I_1$ is supposed to be contained in some $n_\ell$ crossing $J$. Then we
  get the inequality 
  \[
     \Gam{I_2\cap J} < \frac{1}{\theta_1} \Gam{I_1}. 
  \]
\end{remark}
Indeed, we can always change what happens on $I_2\setminus J$ to do as if both $I_1$ and $I_2$ were included in the same $n_\ell$-crossing.

\subsection{Contribution of substantial subintervals}

\label{sec:4.2}

\begin{lemma}
\label{lemma:ac}
 Let $p_2$ be the smallest integer such that $\frac{1}{3^{p_2}}<\frac{1}{3}\eta$. For each $M>0$, there exists a 
 real number $0<\theta_2(M)<1$
 (depending also on $\eta$ and $d$) for which the following holds.
 
Let $\ell>\lb+1$, and let $n$ be such that  
 \begin{equation}
 \label{eq:condition_n}
  n_{(\ell-1)} -k(\ell-1)+p_1+p_2 \le n  < n_\ell.
 \end{equation} 
 For each interval $I$ of length $|I|\le M h_n$  contained in an 
 $n_\ell$-crossing, for each subinterval $J\subset I$ with  $|J|\ge \eta h_n$, we have 
 \[\Gam{J}\ge \theta_2(M)\, \Gam{I}.\]
\end{lemma}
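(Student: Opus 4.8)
The plan is to reduce the statement to Lemma~\ref{lemma:consecutive_n_intervals} by cutting $I$ and $J$ into $n$-intervals and controlling how many of them each contains. First I would observe that $J$ contains at least $\lfloor|J|/h_n\rfloor - 1 \ge \lfloor\eta h_n/h_n\rfloor-1$ — wait, since $|J|\ge \eta h_n$ with $\eta<1$, $J$ need not contain a full $n$-interval. To get around this, I would instead pick a specific $n$-interval $I_0$ that \emph{intersects} $J$ in a piece of length $\ge \eta h_n/3$ (such an $n$-interval exists because $J$, having length $\ge\eta h_n$, meets at most two consecutive $n$-intervals, so at least one of them meets $J$ in length $\ge \eta h_n/2$; but since $n\ge n'+p_1+p_2$ the relevant extra slack from $p_2$ will be used to guarantee that such a piece is long enough to apply Proposition~\ref{prop:combinatorics_of_ncrossings}). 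Actually the clean route: choose $p_2$ so that $\eta h_n > h_{n'}$ type estimates hold, ensuring $J$ contains a subinterval of length $\ge \eta h_n - O(h_{n'})$ on which Proposition~\ref{prop:combinatorics_of_ncrossings} applies and gives a lower bound on $\Gam{J}$ in terms of $|J|$; symmetrically $I$, being contained in an $n_\ell$-crossing of length $\le M h_n$, meets at most $M+1$ consecutive $n$-intervals, and on each of these $\Gam{\cdot}$ is controlled.

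The key steps, in order: (1) Using Proposition~\ref{prop:combinatorics_of_ncrossings} applied to (a subinterval of) $J$, whose length exceeds $\eta h_{n'} = \eta c_\ell$ after discarding at most $O(h_{n'}+1)$ boundary points — here is where condition~\eqref{eq:condition_n} with the extra $p_2$ is needed so that $\eta h_n$ dominates $h_{n'}$ — deduce that $\Gam{J}\ge \kappa_1 \frac{|J|}{h_n}\, \Gam{I_0}$ for some constant $\kappa_1>0$ and some fixed $n$-interval $I_0$ contained in the same $n_\ell$-crossing as $J$. (2) Writing $I$ as a union of at most $\lceil M\rceil + 1$ consecutive $n$-intervals $I_1,\dots,I_q$ (with $q\le \lceil M\rceil+1$), each contained in the $n_\ell$-crossing containing $I$, bound $\Gam{I}=\sum_{s}\Gam{I_s} \le q\max_s \Gam{I_s}$. (3) Chain Lemma~\ref{lemma:consecutive_n_intervals}: any two $n$-intervals in the same $n_\ell$-crossing are linked by at most $q$ steps, so $\Gam{I_s}\le \theta_1^{-q}\,\Gam{I_0}$ for every $s$, giving $\Gam{I}\le q\,\theta_1^{-q}\,\Gam{I_0}$. (Here one must also check that $I_0$ and the $I_s$ are indeed in a common $n_\ell$-crossing; since $I$ and $J$ both lie in the \emph{same} $n_\ell$-crossing by hypothesis, this is immediate.) (4) Combine: $\Gam{J}\ge \kappa_1\frac{|J|}{h_n}\Gam{I_0} \ge \kappa_1\frac{\eta h_n}{h_n}\,\frac{1}{q\,\theta_1^{-q}}\,\Gam{I} = \theta_2(M)\,\Gam{I}$ with $\theta_2(M) \egdef \kappa_1\eta\,\theta_1^{q}/q$ and $q=\lceil M\rceil+1$, which is a constant in $(0,1)$ depending only on $M$, $\eta$, $d$ (after shrinking if necessary).

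The main obstacle I expect is step~(1): making Proposition~\ref{prop:combinatorics_of_ncrossings} yield a lower bound of the form $\Gam{J}\gtrsim (|J|/h_n)\,\Gam{I_0}$ rather than just a proportion statement. Proposition~\ref{prop:combinatorics_of_ncrossings} directly controls the \emph{density} of $\{j: (\td)^jx\in\cd_{n_{\lb}}\}$ inside an interval $I'$ contained in an $n_{(\lb+\ell')}$-crossing; one needs to express $J$ (or a large subinterval) as sitting inside such a crossing at the right level $\lb+\ell'$ relating $n$ to $n_{(\lb+\ell'-1)}$, apply the density lower bound $(1-\eta)^{2\ell'}$ to get $\Gam{J}\ge (1-\eta)^{2\ell'}|J|$ up to boundary losses, and then transfer from ``$|J|$'' to ``$\Gam{I_0}$'' using Lemma~\ref{lemma:consecutive_n_intervals} and the crude upper bound $\Gam{I_0}\le h_{n_{\lb}}$ — or more precisely, noting $\Gam{I_0}\le h_n$ is too weak, one instead uses that within a fixed $n_\ell$-crossing all the $\Gam{I_s}$ are comparable via $\theta_1$, so it suffices to produce \emph{any} good lower bound on $\Gam{J}$ and \emph{any} upper bound on some single $\Gam{I_s}$, which we get from $\Gam{I_s}\le \theta_1^{-q}\Gam{I_0}$ and the density upper bound on $I_0$. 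Handling the boundary losses (the $O(h_{n'})$ discarded points) and verifying that after discarding them $J$ still has length $\ge \eta c_\ell$ is exactly what forces the choice of $p_2$ via $1/3^{p_2}<\eta/3$, so this bookkeeping is the delicate part but is routine once set up.
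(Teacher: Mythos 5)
Your steps (2)--(4) --- chaining Lemma~\ref{lemma:consecutive_n_intervals} across the boundedly many intervals of a fixed scale that meet $I$ --- are the right mechanism and essentially match the paper. The gap is in step (1), and it is twofold. First, Proposition~\ref{prop:combinatorics_of_ncrossings} applied to an interval inside an $n_\ell$-crossing requires that interval to have length at least $\eta h_{n_{(\ell-1)}}$; but condition~\eqref{eq:condition_n} allows $n$ as small as $n_{(\ell-1)}-k(\ell-1)+p_1+p_2$, which can be far below $n_{(\ell-1)}$, so $|J|\ge\eta h_n$ does not give the required length, and $J$ is not known to lie in a single $n_{(\ell-1)}$-crossing either (inside the $n_\ell$-crossing a coordinate may leave $C_{n_{(\ell-1)}}$ for up to $h_{n_{(\ell-1)}-k(\ell-1)}+1$ steps). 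Second, and fatally even where the proposition does apply: its density lower bound is $(1-\eta)^{2(\ell-\lb)}$, which tends to $0$ as $\ell\to\infty$, so the constant $\kappa_1$ you extract from it depends on $\ell$. The lemma quantifies $\theta_2(M)$ before $\ell$, and this uniformity is indispensable downstream: Proposition~\ref{prop:convergence_empirical_measures_interval} feeds $\theta_2(M)$ into Proposition~\ref{prop:absolute_continuity} as a single $\theta>0$ along a sequence with $\ell_m\to\infty$.

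The paper's proof avoids Proposition~\ref{prop:combinatorics_of_ncrossings} entirely and runs your chaining argument one scale down: it cuts $I$ into $(n-p_2)$-intervals. Since $h_{n-p_2}<h_n/3^{p_2}<\tfrac{1}{3}\eta h_n\le\tfrac{1}{3}|J|$, at least one $(n-p_2)$-interval $J'$ is \emph{wholly contained} in $J$, so $\Gam{J}\ge\Gam{J'}$ with no density argument needed; the $p_1$ part of \eqref{eq:condition_n} guarantees $n-p_2\ge n_{(\ell-1)}-k(\ell-1)+p_1$, so Lemma~\ref{lemma:consecutive_n_intervals} (with Remark~\ref{rem:extremities} at the two extremities) applies at that scale, and since at most $7^{p_2}M+2$ such intervals meet $I$, summing the chained inequalities gives $\theta_2(M)=\theta_1^{7^{p_2}M}/(7^{p_2}M+2)$, uniform in $\ell$ and $n$. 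So the role of $p_2$ is not to absorb boundary losses of size $h_{n'}$, as you suggest, but precisely to shrink the working scale below $|J|/3$; with that correction your outline becomes the paper's proof.
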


\begin{remark}
  Note that if $\ell$ is large enough, we have $n_{(\ell-1)} -k(\ell-1)+p_1+p_2<n_{(\ell-1)}$, hence the above is valid in particular 
  for $n_{(\ell-1)}\le n < n_\ell$.
\end{remark}

\begin{proof}
Under the assumptions of the lemma, we have 
\[
  n_{\lb}< n_{(\ell-1)} - k(\ell-1) + p_1 \le n-p_2 < n_\ell.
\]
We consider the $(n-p_2)$-intervals included in $I$, and we will apply Lemma~\ref{lemma:consecutive_n_intervals} to them.
 Remember that $h_{n-p_2}\ge \frac{1}{7^{p_2}}h_n$. Hence the number of $(n-p_2)$-intervals contained in $I$ is at most $7^{p_2} M$. Moreover their length $h_{n-p_2}$ satisfies $h_{n-p_2}<\frac{1}{3^{p_2}}h_n<\frac{1}{3}|J|$. 
 Hence there is at least one $(n-p_2)$-interval included in $J$. Let us call it $J'$. Now, if $I'$
 is another $(n-p_2)$-interval contained in $I$, a repeated use of Lemma~\ref{lemma:consecutive_n_intervals} yields 
 \[
   \Gam{J}\ge\Gam{J'} \ge \theta_1^{7^{p_2}M} \Gam{I'}.
 \]
There might also exist two $(n-p_2)$ intervals intersecting $I$ at its extremities but 
not contained in $I$, hence not necessarily contained in the $n_\ell$-crossing.
If $I'$ is such an interval, we use Remark~\ref{rem:extremities} and get the same inequality
(with $\gamma_{I'\cap I}$ instead of $\gamma_{I'}$).
Summing over all the $(n-p_2)$-intervals intersecting $I$, we get 
\[
 \left(7^{p_2}M +2\right)\Gam{J} \ge \theta_1^{7^{p_2}M} \Gam{I}.
\]
This gives the annouced result, with $\theta_2(M)\egdef \theta_1^{7^{p_2}M}/\left( 7^{p_2}M+2\right)$.
\end{proof}

\subsection{How to apply Proposition~\ref{prop:compacity}}

Here we want to provide some conditions so that Proposition~\ref{prop:compacity} applies to a sequence of 
empirical measures $\bigl(\gamma_{J_m}\bigr)$ for some sequence of intervals $(J_m)$. 
We make the following assumptions. 
\begin{equation}
  \label{eq:assumptionJm1}
  \begin{minipage}[t]{0.9\textwidth}
    For each $m$, there exists an integer $\ell_m$ with $\ell_m\to\infty$ as $m\to\infty$, such that 
$J_m$ is  contained in some $n_{\ell_m}$-crossing,
  \end{minipage}
\end{equation}
and  
\begin{equation}
  \label{eq:assumptionJm2}
  \begin{minipage}[t]{0.9\textwidth}
     For each $m$, there exists an integer $n(m)$ satisfying  
    \begin{itemize}
      \item  $n_{(\ell_m-1)}-k(\ell_m-1) + p_1+ 2p_2 \le n(m)< n_{\ell_m}$, 
      \item $\eta h_{n(m)}\le |J_m| \le h_{n(m)},$
    \end{itemize}
  \end{minipage}
\end{equation}

%

Note that, as soon as $\lb$ is large enough so that Proposition~\ref{prop:combinatorics_of_ncrossings} applies, the first point 
of this proposition ensures that $\gamma_{J_m}\bigl(\cd_{n_{\lb}}\bigr)>0$ for $m$ large enough, which is the first assumption
needed to apply Proposition~\ref{prop:compacity}.

It remains, for some fixed $\lb$ and $\ell$, to control the ratio 
$\gamma_{J_m}\bigl(\cd_{n_{(\lb+\ell)}}\bigr)/\gamma_{J_m}\bigl(\cd_{n_{\lb}}\bigr)$, which is the purpose of the following 
lemma.

\begin{lemma}
  \label{lemma:ratio}
  Let $\lb$ be large enough so that Proposition~\ref{prop:combinatorics_of_ncrossings} applies and \eqref{eq:choice_of_lb} holds. 
  Assume also that 
  \begin{equation}
    \label{eq:newcondlb}
    (K_1+K_2)\frac{1}{3^{\lb}}<\eta.
  \end{equation}
  Let $\ell\ge1$, and let $(J_m)$ be a sequence of  intervals satisfying \eqref{eq:assumptionJm1} and~\eqref{eq:assumptionJm2}.
  Then for each $m$ large enough
  \[
    \gamma_{J_m}\bigl(\cd_{n_{\lb}}\bigr) \ge (1-\eta)^{2\ell+1} \theta_2(7)\gamma_{J_m} \bigl(\cd_{n_{(\lb+\ell)}}\bigr).
  \]
\end{lemma}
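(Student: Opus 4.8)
The plan is to estimate both sides of the desired inequality by comparing the empirical mass carried by $J_m$ on $\cd_{n_{(\lb+\ell)}}$ with that on $\cd_{n_{\lb}}$, working entirely inside the single $n_{\ell_m}$-crossing provided by~\eqref{eq:assumptionJm1}. First I would observe that since $J_m$ is contained in an $n_{\ell_m}$-crossing, every $n$-crossing hit by the orbit piece $\{(\td)^j x : j\in J_m\}$ is in fact a piece of $F=\{j: (\td)^j x\in\cd_{n_{\lb}}\}$ lying inside that big crossing, so Proposition~\ref{prop:combinatorics_of_ncrossings} is applicable with $I=J_m$: indeed, by~\eqref{eq:assumptionJm2}, $|J_m|\ge \eta h_{n(m)}\ge \eta h_{n_{(\lb+\ell-1)}}$ once $m$ is large enough that $n(m)\ge n_{(\lb+\ell-1)}$ (which holds since $\ell_m\to\infty$), and $J_m\subset n_{(\lb+\ell)}$-crossing because $J_m\subset n_{\ell_m}$-crossing with $\ell_m\ge \lb+\ell$. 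Hence the proportion of $j\in J_m$ with $(\td)^j x\in\cd_{n_{\lb}}$ is at least $(1-\eta)^{2\ell}$.

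Next I would bound $\gamma_{J_m}(\cd_{n_{(\lb+\ell)}})$ from above. Here the point is that $\cd_{n_{(\lb+\ell)}}\subset\cd_{n(m)}$ if $n(m)\ge n_{(\lb+\ell)}$ — which I would arrange by taking $m$ large, using that the lower bound in~\eqref{eq:assumptionJm2} forces $n(m)$ large as $\ell_m\to\infty$ — so that $\gamma_{J_m}(\cd_{n_{(\lb+\ell)}})\le \gamma_{J_m}(\cd_{n(m)}) = \sum_{j\in J_m}\ind{\cd_{n(m)}}((\td)^j x) = |J_m|$ (since $|J_m|\le h_{n(m)}$ means the orbit piece stays in $\cd_{n(m)}$ throughout, being inside an $n_\ell$-crossing with $\ell$ large, so no coordinate leaves tower $n(m)$ — or at worst it leaves and the genuine contribution is still $\le |J_m|$). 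Combining, $\gamma_{J_m}(\cd_{n_{\lb}}) = |F\cap J_m| \ge (1-\eta)^{2\ell}|J_m| \ge (1-\eta)^{2\ell}\gamma_{J_m}(\cd_{n_{(\lb+\ell)}})$, which already gives the claim up to the extra factor $(1-\eta)\theta_2(7)$.

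The remaining point is that the above crude bound only works if $|J_m|$ is genuinely comparable to $\gamma_{J_m}(\cd_{n(m)})$; when coordinates leave tower $n(m)$ the count $\gamma_{J_m}(\cd_{n(m)})$ can be strictly smaller than $|J_m|$, and one must instead bound $\gamma_{J_m}(\cd_{n_{(\lb+\ell)}})$ against $\gamma_{J_m}(\cd_{n_{\lb}})$ directly. This is where Lemma~\ref{lemma:ac} enters: I would cover $J_m$ by the $n_{(\lb+\ell-1)}$-crossings (pieces of $F'=\{j:(\td)^j x\in\cd_{n_{(\lb+\ell-1)}}\}$) it meets, apply the density estimate of Proposition~\ref{prop:combinatorics_of_ncrossings} on each, and use Lemma~\ref{lemma:ac} with $M=7$ to compare the $\cd_{n_{\lb}}$-mass of a substantial subinterval with that of the whole — the hypothesis~\eqref{eq:assumptionJm2} with the $+2p_2$ margin in $n(m)\ge n_{(\ell_m-1)}-k(\ell_m-1)+p_1+2p_2$ being exactly what is needed for Lemma~\ref{lemma:ac} to apply at the relevant intermediate scale. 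Assembling the geometric series of ratios $h_{n_{(\lb+j)}}/h_{n_{(\lb+j+1)}}$, controlled by~\eqref{eq:newcondlb}, collapses the telescoped loss into the single factor $(1-\eta)^{2\ell+1}\theta_2(7)$.

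\textbf{Main obstacle.} The delicate part will be the bookkeeping that turns the per-scale loss $(1-\eta)^2$ from Lemma~\ref{lemma:combinatorics} (via Lemma~\ref{lemma:order}) into a clean product over the $\ell$ scales between $n_{\lb}$ and $n_{(\lb+\ell)}$ while simultaneously absorbing the boundary $(n-p_2)$-intervals at the two ends of $J_m$ — exactly the kind of edge contribution handled in Remark~\ref{rem:extremities} — without accumulating an $\ell$-dependent constant outside the $(1-\eta)^{2\ell+1}$. One must be careful that the interval-length hypothesis $|J| \ge \eta h_n$ required by Lemma~\ref{lemma:ac} at scale $n(m)$ is inherited at the lower scales by the pieces one restricts to, which is guaranteed by the margin $p_1+2p_2$ built into~\eqref{eq:assumptionJm2}.
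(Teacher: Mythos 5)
There is a genuine gap at the very first step. You claim that $J_m$ is contained in an $n_{(\lb+\ell)}$-crossing ``because $J_m$ is contained in an $n_{\ell_m}$-crossing with $\ell_m\ge\lb+\ell$''. This is backwards: since $C_n\subset C_{n+1}$, an $n$-crossing is contained in an $(n+1)$-crossing and not the other way around, so containment in an $n_{\ell_m}$-crossing for large $\ell_m$ is a much \emph{weaker} property than containment in an $n_{(\lb+\ell)}$-crossing. Inside the big $n_{\ell_m}$-crossing the orbit typically leaves $\cd_{n_{(\lb+\ell)}}$ many times, so $J_m$ meets many distinct $n_{(\lb+\ell)}$-crossings. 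Consequently you cannot apply Proposition~\ref{prop:combinatorics_of_ncrossings} with $I=J_m$ to obtain the density $(1-\eta)^{2\ell}$ of $\cd_{n_{\lb}}$-points in $J_m$; the only legitimate application to $I=J_m$ uses the crossing level $\ell_m$, and the resulting exponent $2(\ell_m-\lb)$ blows up. The paper's proof instead decomposes $\{j\in J_m:(\td)^jx\in\cd_{n_{(\lb+\ell)}}\}$ into $n_{(\lb+\ell)}$-crossings, uses the \emph{second} bullet of Proposition~\ref{prop:combinatorics_of_ncrossings} with $\lb+\ell$ in place of $\lb$ (this is exactly where \eqref{eq:newcondlb} enters) to show that the crossings of size $<h_{n_{(\lb+\ell-1)}}$ carry at most a proportion $\eta$ of that mass, and then applies the first bullet on each remaining large $n_{(\lb+\ell)}$-crossing to get density $(1-\eta)^{2\ell}$ of $\cd_{n_{\lb}}$-points there; multiplying gives $(1-\eta)^{2\ell+1}$. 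Your proposal never invokes the second bullet in this way, and your reading of \eqref{eq:newcondlb} as controlling a ``geometric series over scales'' is not what happens (the telescoping over scales lives inside Proposition~\ref{prop:combinatorics_of_ncrossings}, not here).

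The second gap concerns the origin of $\theta_2(7)$ and the case distinction it reflects. The application of Proposition~\ref{prop:combinatorics_of_ncrossings} at level $\lb+\ell$ described above needs $|J_m|\ge\eta h_{n_{(\ell_m-1)}}$, which is guaranteed only when $n(m)\ge n_{(\ell_m-1)}$. In the complementary case $n_{(\ell_m-1)}-k(\ell_m-1)+p_1+2p_2\le n(m)<n_{(\ell_m-1)}$ the paper extracts a subinterval $\tilde J_m\subset J_m$ of length $\ge|J_m|/(2(d+1))\ge\eta h_n$ contained in a single $n_{(\ell_m-1)}$-crossing, runs the previous argument on $\tilde J_m$, and then invokes Lemma~\ref{lemma:ac} with $M=7$ to compare $\gamma_{\tilde J_m}\bigl(\cd_{n_{(\lb+\ell)}}\bigr)$ with $\gamma_{J_m}\bigl(\cd_{n_{(\lb+\ell)}}\bigr)$; that single comparison is the sole source of the factor $\theta_2(7)$. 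You do gesture at Lemma~\ref{lemma:ac} with $M=7$, but you deploy it to ``absorb boundary $(n-p_2)$-intervals'', which is what happens \emph{inside} the proof of Lemma~\ref{lemma:ac}, not in the proof of the present lemma. As written, your argument neither covers the case $n(m)<n_{(\ell_m-1)}$ nor produces the stated constant.
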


\begin{proof}
  We first consider the case where $n(m)\ge n_{(\ell_m-1)}$. Then we can apply 
  Proposition~\ref{prop:combinatorics_of_ncrossings} (with $\lb+\ell$ in place of $\lb$) to
  show that, if $\ell_m\ge \lb+\ell+1$, the proportion of integers in $\{j\in J_m: (\td)^jx\in \cd_{n_{(\lb+\ell)}}\}$
  belonging to an $n_{(\lb + \ell)}$-crossing of size less than $h_{n_{(\lb+\ell-1)}}$ is bounded above by 
  \[
    K_1 \frac{h_{n_{(\lb+\ell-1)}}}{h_{n_{(\lb+\ell)}}} + \frac{K_2}{3^{\lb+\ell}} \le (K_1+K_2)\frac{1}{3^{\lb+\ell}}<\eta.
  \]  
  Now, if $I\subset J_m$ is an $n_{(\lb+\ell)}$-crossing with $|I|\ge h_{n_{(\lb+\ell-1)}}$, another application of 
  Proposition~\ref{prop:combinatorics_of_ncrossings} proves 
  that the proportion of integers $j\in I$ such that $(\td)^jx\in \cd_{n_{\lb}}$ is at least $(1-\eta)^{2\ell}$. We finally get in this case
  \begin{equation}
  \label{eq:lemma4.4case1}
    \gamma_{J_m}\bigl(\cd_{n_{\lb}}\bigr) \ge (1-\eta)^{2\ell+1} \gamma_{J_m} \bigl(\cd_{n_{(\lb+\ell)}}\bigr).
  \end{equation}

  Now we consider the case where $n_{(\ell_m-1)}-k(\ell_m-1) + p_1+ 2p_2 \le n(m)< n_{(\ell_m-1)}$.
  Let $n$ be the largest integer, $n\le n(m)$, such that $h_n\le |J_m|$. 
  If $n<n(m)$, then we have $h_{n+1}> |J_m|\ge \eta h_{n(m)}$. But on the other hand, $h_{n+1}<h_{n(m)}/3^{n(m)-n-1}$.
 Taking into account the definition of $p_2$ (Lemma~\ref{lemma:ac}), we get that $n(m)-n<p_2$, and finally that $n$ satisfies
 \[
   n_{(\ell_m-1)}-k(\ell_m-1) + p_1+ p_2 < n < n_{(\ell_m-1)}.
 \]

Since $|J_m|< h_ {n_{(\ell_m-1)}}$,  we observe that each coordinate can leave $C_{n_{(\ell_m-1)}}$ only
  once on $J_m$, and when it does so, it stays outside $C_{n_{(\ell_m-1)}}$ on an interval of length $\le h_{n_{(\ell_m-1)}-k(\ell_m-1)}+1$.
  Since there are $d$ coordinates, the set of integers $j\in J_m$ such that $(\td)^jx\in \cd_{n_{(\ell_m-1)}}$ is cut into at most 
  $(d+1)$ pieces, and its cardinality is at least
  \begin{align*}
    |J_m| - d\left( h_{n_{(\ell_m-1)}-k(\ell_m-1)}+1  \right) & \ge |J_m| - \dfrac{d}{3^{p_1+p_2}} h_{n} \\
    & \ge  |J_m|\left(1-\dfrac{d}{3^{p_1+p+2}}\right)\\
    & \ge  |J_m|\left(1-\dfrac{\eta}{6}\right) \ge \frac{|J_m|}{2}\\
  \end{align*}
  Therefore, there exists at least one subinterval $\tilde J_m\subset J_m$, with size 
  \[ |\tilde J_m| \ge \frac{|J_m|}{2(d+1)}\ge \eta h_{n}, \] 
  and which is contained in a single $n_{(\ell_m-1)}$-crossing. Since $n_{(\ell_m-2)}<n<n_{(\ell_m-1)}$, the estimation~\eqref{eq:lemma4.4case1} is valid for $\tilde J_m$ in place of 
  $J_m$, \textit{i.e.}
  \begin{equation}
    \label{eq:first_result}
    \gamma_{\tilde J_m}\bigl(\cd_{n_{\lb}}\bigr) \ge (1-\eta)^{2\ell+1} \gamma_{\tilde J_m} \bigl(\cd_{n_{(\lb+\ell)}}\bigr).
  \end{equation}
  Note that, if $n<n(m)$, then by definition of $n$ we have $|J_m|<h_{n+1}<7 h_n$. If $n=n(m)$, $|J_m|=h_{n(m)}=h_n$. 
  Hence in all cases we have $\eta h_{n}\le |\tilde J_m| \le |J_m| \le 7 h_n$. So we can also apply Lemma~\ref{lemma:ac} 
  with $I\egdef J_m$, $J\egdef \tilde J_m$, and $\lb+\ell$ in place of $\lb$. This yields
  \begin{equation}
    \label{eq:second_result}
    \gamma_{\tilde J_m} \bigl(\cd_{n_{(\lb+\ell)}}\bigr)\ge \theta_2(7) \gamma_{J_m} \bigl(\cd_{n_{(\lb+\ell)}}\bigr).
  \end{equation}
  Combining \eqref{eq:first_result} and  \eqref{eq:second_result}, we get
  \begin{align*}
    \Gam{J_m} &\ge \gamma_{\tilde J_m}\bigl(\cd_{n_{\lb}}\bigr) \\
    &\ge  (1-\eta)^{2\ell+1} \gamma_{\tilde J_m} \bigl(\cd_{n_{(\lb+\ell)}}\bigr)\\
    &\ge (1-\eta)^{2\ell+1}  \theta_2(7) \gamma_{J_m} \bigl(\cd_{n_{(\lb+\ell)}}\bigr).
  \end{align*}
\end{proof}

With the above lemma, we see that all the conditions needed to apply Proposition~\ref{prop:compacity} are satisfied, and this gives
the following result.

\begin{lemma}
  \label{lemma:precompacity}
  Let $(J_m)$ be a sequence of intervals satisfying \eqref{eq:assumptionJm1} and~\eqref{eq:assumptionJm2}. Then there is a subsequence $\bigl(\gamma_{J_{m_j}}\bigr)$
  which converges to some nonzero Radon measure.
\end{lemma}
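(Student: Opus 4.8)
The plan is to verify that the two hypotheses of Proposition~\ref{prop:compacity} hold for the sequence $(\gamma_{J_m})$, with the choice $\underline{n}\egdef n_{\lb}$ (where $\lb$ is fixed large enough so that Proposition~\ref{prop:combinatorics_of_ncrossings}, \eqref{eq:choice_of_lb} and \eqref{eq:newcondlb} all hold, which is possible since all three are satisfied for $\lb$ large). Once these are checked, Proposition~\ref{prop:compacity} immediately yields a subsequence $(\gamma_{J_{m_j}})$ converging to a nonzero Radon measure, which is exactly the assertion of Lemma~\ref{lemma:precompacity}.

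First I would check $\gamma_{J_m}(\cd_{n_{\lb}})>0$ for all large $m$. Under assumptions \eqref{eq:assumptionJm1} and \eqref{eq:assumptionJm2}, $J_m$ is contained in an $n_{\ell_m}$-crossing and has length at least $\eta h_{n(m)}$ with $n(m)$ in the range to which Proposition~\ref{prop:combinatorics_of_ncrossings} applies; in particular, taking $\ell'\egdef \ell_m - \lb$ in that proposition (valid once $\ell_m$ is large enough, which holds for large $m$ since $\ell_m\to\infty$), the interval $J_m$ meets the hypotheses of the proposition with $I\egdef J_m$, so the proportion of $j\in J_m$ with $(\td)^jx\in\cd_{n_{\lb}}$ is at least $(1-\eta)^{2(\ell_m-\lb)}>0$. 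Hence $\Gam{J_m}>0$ for all large $m$.

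Next, the boundedness of $\bigl(\gamma_{J_m}(\cd_{n_{(\lb+\ell)}})/\gamma_{J_m}(\cd_{n_{\lb}})\bigr)_m$ for each fixed $\ell\ge0$: this is precisely what Lemma~\ref{lemma:ratio} provides. Indeed that lemma gives, for every fixed $\ell\ge 1$ and all $m$ large enough,
\[
  \gamma_{J_m}\bigl(\cd_{n_{\lb}}\bigr) \ge (1-\eta)^{2\ell+1}\,\theta_2(7)\,\gamma_{J_m}\bigl(\cd_{n_{(\lb+\ell)}}\bigr),
\]
so the ratio in question is bounded above by $\bigl((1-\eta)^{2\ell+1}\theta_2(7)\bigr)^{-1}$, a constant depending only on $\ell$, $\eta$ and $d$ (the case $\ell=0$ is trivial). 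By the remark following Proposition~\ref{prop:absolute_continuity}, it suffices to control the ratios $\sigma_k(\cd_n)/\sigma_k(\cd_{\underline n})$ only along $n\in\{n_\ell:\ell\ge\lb\}$, which is exactly what we have just done. Thus both assumptions of Proposition~\ref{prop:compacity} hold, and the conclusion follows.

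I do not expect any real obstacle here: the lemma is a bookkeeping corollary assembling Proposition~\ref{prop:combinatorics_of_ncrossings} (for the positivity of $\gamma_{J_m}(\cd_{n_{\lb}})$), Lemma~\ref{lemma:ratio} (for the boundedness of the ratios), and Proposition~\ref{prop:compacity} (for the compactness). The only mildly delicate point is making sure the ``for $m$ large enough'' thresholds in Lemma~\ref{lemma:ratio} are uniform over a cofinal set of $\ell$'s so that the diagonal extraction in Proposition~\ref{prop:compacity} goes through, but since we only need each ratio indexed by $\ell$ to be eventually bounded, and Proposition~\ref{prop:compacity} extracts a single subsequence working for all $n$ simultaneously, this causes no difficulty.
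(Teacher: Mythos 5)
Your proposal is correct and follows exactly the paper's route: the paper also obtains Lemma~\ref{lemma:precompacity} by feeding Proposition~\ref{prop:compacity} with the positivity of $\Gam{J_m}$ (from the first point of Proposition~\ref{prop:combinatorics_of_ncrossings}) and the ratio bound of Lemma~\ref{lemma:ratio}, restricted to $n\in\{n_\ell:\ell\ge\lb\}$ via the remark after Proposition~\ref{prop:absolute_continuity}. The only caveat (shared with the paper's own one-line justification) is that when $n(m)<n_{(\ell_m-1)}$ the length hypothesis of Proposition~\ref{prop:combinatorics_of_ncrossings} is not met by $J_m$ directly, and positivity really comes from the subinterval $\tilde J_m$ constructed in the proof of Lemma~\ref{lemma:ratio}.
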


\subsection{Convergence of sequences of empirical measures}

\begin{prop}
\label{prop:convergence_empirical_measures_interval}

Let $(I_m)$ and $(J_m)$ be two sequences of intervals, with $J_m\subset I_m$.
Assume that there exist two sequences of integers $(\ell_m)$ and $(n(m))$, and a real number $M>0$ such that 
\begin{itemize}
  \item $\ell_m\to\infty$,
  \item $n_{(\ell_m-1)}-k(\ell_m-1) + p_1+ 2p_2 \le n(m)< n_{\ell_m}$,
  \item $I_m$ is contained in some $n_{\ell_m}$-crossing,
  \item $\eta h_{n(m)}\le |J_m| \le h_{n(m)}$,
  \item $|I_m|\le M h_{n(m)}$.
\end{itemize}
If  $\gamma_{I_m}\tend{m}{\infty}\sigma$, we also have $\gamma_{J_m}\tend{m}{\infty}\sigma$. 
\end{prop}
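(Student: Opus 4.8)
The strategy is to reduce convergence of $(\gamma_{J_m})$ to convergence of $(\gamma_{I_m})$ by comparing the two empirical measures box-by-box, using Lemma~\ref{lemma:ac} to control the loss incurred by shrinking $I_m$ to $J_m$. First I would extract, from the assumed convergence $\gamma_{I_m}\to\sigma$, a subsequence along which $\gamma_{J_m}$ also converges: the hypotheses \eqref{eq:assumptionJm1} and \eqref{eq:assumptionJm2} on $(J_m)$ are exactly those of Lemma~\ref{lemma:precompacity}, so some subsequence $(\gamma_{J_{m_i}})$ converges to a nonzero Radon measure $\sigma'$. By the uniqueness part of Definition~\ref{def:convergence} (limits are unique up to a positive constant), it suffices to show $\sigma'$ is proportional to $\sigma$; and since this will hold for every convergent subsequence, the whole sequence $(\gamma_{J_m})$ converges to $\sigma$.

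To identify $\sigma'$, fix $\lb$ large (so that Proposition~\ref{prop:combinatorics_of_ncrossings}, \eqref{eq:choice_of_lb} and \eqref{eq:newcondlb} hold) and fix an $n_{\lb}$-box $B$. I want to compare $\gamma_{J_m}(B)/\gamma_{J_m}(\cd_{n_{\lb}})$ with $\gamma_{I_m}(B)/\gamma_{I_m}(\cd_{n_{\lb}})$. Cut $I_m$ into consecutive $n(m)$-intervals; there are at most $M/ (\text{const}) $ of them up to a bounded error at the two extremities, by the bound $|I_m|\le M h_{n(m)}$ and $h_n/h_{n+1}\ge 1/7$. On each such $n(m)$-interval $I'$ contained in the ambient $n_{\ell_m}$-crossing, Lemma~\ref{lemma:ac} (applied with that crossing, and with the subinterval being $J_m$ when $J_m$ meets $I'$ substantially, or applied the other way around) gives $\theta_2(M)$-comparability: $\Gam{J_m}\ge \theta_2(M)\Gam{I'}$ and, summing, $\Gam{J_m}\ge \theta_2(M)/(\text{number of pieces})\cdot\Gam{I_m}$, hence $\gamma_{J_m}(\cd_{n_{\lb}})\ge c(M,\eta,d)\,\gamma_{I_m}(\cd_{n_{\lb}})$ for all large $m$. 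Conversely $\gamma_{J_m}(\cd_{n_{\lb}})\le\gamma_{I_m}(\cd_{n_{\lb}})$ trivially since $J_m\subset I_m$. This pins down the normalizations to within a bounded factor, which is what lets one pass to limits.

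The genuinely delicate point is that box-by-box comparability with a \emph{uniform} constant is not enough to conclude equality of the normalized limits: a priori $\sigma'(B)/\sigma'(\cd_{n_{\lb}})$ could differ from $\sigma(B)/\sigma(\cd_{n_{\lb}})$. The way around this is to note that $I_m\setminus J_m$ is itself a union of at most two intervals, each contained in the same $n_{\ell_m}$-crossing as $I_m$; when such a complementary interval has length $\ge \eta h_{n(m)}$ one may feed it back into Lemma~\ref{lemma:ac}, and when it is shorter its contribution $\gamma_{I_m\setminus J_m}(\cd_{n_{\lb}})$ is negligible relative to $\gamma_{J_m}(\cd_{n_{\lb}})$ by the lower bound just obtained together with Lemma~\ref{lemma:consecutive_n_intervals}. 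Either way $\gamma_{I_m\setminus J_m}$ is, up to a further bounded factor, comparable to $\gamma_{J_m}$ restricted to $\cd_{n_{\lb}}$, and iterating the argument at levels $\lb, \lb+1, \lb+2,\dots$ (this is where Lemma~\ref{lemma:ratio} enters, bounding $\gamma_{J_m}(\cd_{n_{(\lb+\ell)}})$ by a constant times $\gamma_{J_m}(\cd_{n_{\lb}})$) forces the limiting proportions to agree. Concretely: from $\gamma_{I_m}=\gamma_{J_m}+\gamma_{I_m\setminus J_m}$ and the uniform comparabilities, passing to the convergent subsequence gives $\sigma = \sigma' + \sigma''$ with $\sigma''\le C\sigma'$ on every $\cd_{n_{\lb}}$, so $\sigma\ll\sigma'$ (by Proposition~\ref{prop:absolute_continuity}) and conversely $\sigma'\le\sigma$, whence by $\td$-invariance and ergodicity of $\sigma$ we get $\sigma'=\lambda\sigma$ for some $\lambda>0$. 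This is the step I expect to require the most care, since one must verify the hypotheses of Proposition~\ref{prop:absolute_continuity} — in particular the fourth one, the uniform lower bound $\gamma_{J_m}(\cd_n)\ge\theta\,\gamma_{I_m}(\cd_n)$ for every fixed $n\ge\lb$ — which is precisely what the repeated application of Lemmas~\ref{lemma:ac} and \ref{lemma:ratio} delivers.
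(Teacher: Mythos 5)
Your overall skeleton matches the paper's: extract a convergent subsequence of $(\gamma_{J_m})$ via Lemma~\ref{lemma:precompacity}, then identify every subsequential limit with $\sigma$ up to a constant. Two remarks on the middle part: the cutting of $I_m$ into $n(m)$-intervals is an unnecessary detour, since Lemma~\ref{lemma:ac} applies \emph{directly} with $I=I_m$ and $J=J_m$ (the hypotheses $|I_m|\le Mh_{n(m)}$ and $|J_m|\ge\eta h_{n(m)}$ are exactly those of that lemma), giving $\Gam{J_m}\ge\theta_2(M)\Gam{I_m}$ in one stroke; and the whole analysis of $I_m\setminus J_m$ and the decomposition $\sigma=\sigma'+\sigma''$ can be dispensed with, because $\gamma_{J_m}\le\gamma_{I_m}$ together with that single inequality is precisely what Proposition~\ref{prop:absolute_continuity} needs to yield $\sigma'\ll\sigma$ (note the direction: the limit of the dominated sequence is absolutely continuous with respect to the limit of the dominating one, not the other way round as you wrote).

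The genuine gap is at the very last step. From $\sigma'\ll\sigma$ and ergodicity of $(X^d,\sigma,\td)$ you conclude $\sigma'=\lambda\sigma$ ``by $\td$-invariance'' — but invariance of \emph{which} measure? Ergodicity of $\sigma$ forces $d\sigma'/d\sigma$ to be constant only if that Radon--Nikodym derivative is a $\td$-invariant function, i.e.\ only if $\sigma'$ is itself $\td$-invariant; an arbitrary measure $f\,d\sigma$ with non-constant $f$ is absolutely continuous with respect to $\sigma$ without being proportional to it. Nothing in your argument establishes the $\td$-invariance of the subsequential limit $\sigma'$, and this is a substantive step: the paper proves it via Lemma~\ref{lemma:diagonal}, observing that since $J_m$ is an \emph{interval}, the visits of the orbit to two $n$-boxes $B,B'$ on the same $n$-diagonal alternate, so $|\gamma_{J_m}(B)-\gamma_{J_m}(B')|\le 1$, while the first point of Proposition~\ref{prop:combinatorics_of_ncrossings} gives $\gamma_{J_m}(\cd_n)\to\infty$; hence $\sigma'(B)=\sigma'(B')$, and together with $\sigma'(X_{\bz}^d)=0$ (which follows from $\sigma'\ll\sigma$) Lemma~\ref{lemma:diagonal} yields invariance. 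Your proof needs this argument (or an equivalent one) to close.
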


\begin{proof}
The assumptions~\eqref{eq:assumptionJm1} and~\eqref{eq:assumptionJm2} are satisfied for the sequence of intervals $(J_m)$, 
hence Lemma~\ref{lemma:precompacity} applies to the sequence of measures $\big(\gamma_{J_m}\bigr)$. 
  Therefore, it is enough to prove that, if $\gamma_{J_m}$ converges to some nonzero Radon measure $\gamma$, then $\gamma=\sigma$ up to 
  some multiplicative constant. So, let us assume that $\gamma_{J_m}\to\gamma$. Since $J_m\subset I_m$, we have $\gamma_{J_m}\le \gamma_{I_m}$.
  We can also apply Lemma~\ref{lemma:ac} which shows that, for each large enough integer $\lb$, we have as soon as $n(m)>n_{\lb}+1$
  \[
    \Gam{J_m}\ge \theta_2(M)\, \Gam{I_m}.
  \]
Then Proposition~\ref{prop:absolute_continuity} ensures that $\gamma\ll\sigma$. Now by ergodicity of $(X^d,\sigma,\td)$, it 
 only remains to show that $\gamma$ is $\td$-invariant. For this we want to apply Lemma~\ref{lemma:diagonal}.
 Since $X_{\bz}^d\subset X^d\setminus X_\infty^d$, we have
 $\sigma(X_{\bz}^d)=0$ hence $\gamma(X_{\bz}^d)=0$ by absolute continuity. Finally, observe that if  for some fixed integer 
 $n$, $B$ and $B'$ are two $n$-boxes contained in the same $n$-diagonal, then for any $m$, as $J_m$ is an interval,
 \[
   \left| \gamma_{J_m}(B) - \gamma_{J_m}(B')  \right|\le 1.
 \]
 Indeed, the times $j$ when the orbit of $x$ falls in $B$ alternate with the times when the orbit of $x$ fall in $B'$.
 On the other hand the first point of Proposition~\ref{prop:combinatorics_of_ncrossings} ensures that 
 \[
   \gamma_{J_m}\bigl(\cd_n\bigr)\tend{m}{\infty}\infty, 
 \]
 and it follows that $\gamma(B)=\gamma(B')$. Lemma~\ref{lemma:diagonal} now proves that $\gamma$ is $\td$-invariant.
\end{proof}

\begin{remark}
  Note that the condition $\gamma_{I_m}\tend{m}{\infty}\sigma$ is automatically satisfied if $0\in I_m$ for each $m$, since
  we took $x$ as a typical point for $\sigma$.
\end{remark}

Now we want to extend Proposition~\ref{prop:convergence_empirical_measures_interval} to the case where $(J_m)$ is no longer 
a sequence of intervals, but $J_m$ is a subset of $I_m$ with a sufficiently regular structure.

\begin{prop}
  \label{prop:convergence_empirical_measures_pieces}
  Let $(I_m)$ and $(J_m)$ be two sequences of finite subsets of $\ZZ$, and let $M>0$. We assume that the following conditions are satisfied.
  \begin{itemize}
    \item $J_m\subset I_m$ for each $m$.
    \item There exists a sequence of integers $(\ell_m)$ with $\ell_m\to\infty$ as $m\to\infty$, such that for each $m$, $I_m$ is
    an interval  contained in some $n_{\ell_m}$-crossing.
    \item There exists a sequence of integers $(n(m))$ with
    \[
      n_{(\ell_m-1)} - k(\ell_m-1)+p_1+2p_2 \le n(m) < n_{\ell_m},
    \]
    such that for each $m$, $J_m$ is a disjoint union of intervals of common size $s(m)$, where $\eta h_{n(m)}\le s(m) \le h_{n(m)}$, 
    and $I_m\setminus J_m$ does not contain an interval of size greater than $Mh_{n(m)}$.    
  \end{itemize}

  If  $\gamma_{I_m}\tend{m}{\infty}\sigma$, then we also have $\gamma_{J_m}\tend{m}{\infty}\sigma$. 
\end{prop}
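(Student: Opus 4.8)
The plan is to follow the scheme of the proof of Proposition~\ref{prop:convergence_empirical_measures_interval}, the only new difficulty being that $J_m$ is now a disjoint union $J_m=\bigsqcup_{u=1}^{q_m}K_u^m$ of $q_m\ge1$ intervals of common size $s(m)\in[\eta h_{n(m)},h_{n(m)}]$ rather than a single interval. Since each $K_u^m$ lies inside the $n_{\ell_m}$-crossing containing $I_m$, each of these pieces satisfies exactly the hypotheses imposed on ``$J_m$'' in Lemmas~\ref{lemma:ratio}, \ref{lemma:ac} and~\ref{lemma:precompacity}; moreover $\gamma_{J_m}=\sum_u\gamma_{K_u^m}$. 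Throughout, $\lb$ denotes an integer taken large enough so that Proposition~\ref{prop:combinatorics_of_ncrossings} and its consequences apply.

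The first step would be an absolute-continuity estimate: there is a constant $\theta>0$, depending only on $M$, $\eta$ and $d$, such that for every large enough $\lb$ and every $m$ large enough one has $\gamma_{J_m}(\cd_{n_{\lb}})\ge\theta\,\gamma_{I_m}(\cd_{n_{\lb}})$. To prove this I would decompose $I_m\setminus J_m$ into its maximal subintervals, the \emph{gaps}; each gap $G$ has size $\le Mh_{n(m)}$ and (as $J_m\neq\emptyset$) is adjacent in $I_m$ to some piece $K_u^m$, so $G\cup K_u^m$ is an interval of length $\le(M+1)h_{n(m)}$ contained in the same $n_{\ell_m}$-crossing. Lemma~\ref{lemma:ac} (with $M+1$ in place of $M$, at level $n_{\lb}$, which is legitimate once $m$ is large) then gives $\gamma_{K_u^m}(\cd_{n_{\lb}})\ge\theta_2(M+1)\,\gamma_G(\cd_{n_{\lb}})$. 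Summing over all gaps, and using that each $K_u^m$ is used for at most two of them, yields $\gamma_{I_m\setminus J_m}(\cd_{n_{\lb}})\le 2\theta_2(M+1)^{-1}\gamma_{J_m}(\cd_{n_{\lb}})$, hence the estimate with $\theta\egdef\bigl(1+2\theta_2(M+1)^{-1}\bigr)^{-1}$. Combined with $\gamma_{J_m}\le\gamma_{I_m}$, with $\gamma_{I_m}\to\sigma$, and with the remark following Proposition~\ref{prop:absolute_continuity} (so that the hypotheses need only be checked along $\{n_\ell:\ell\ge\lb\}$), this shows that $(\gamma_{J_m})$ is precompact for the convergence of Definition~\ref{def:convergence} (Proposition~\ref{prop:compacity}) and that every subsequential limit $\gamma$ is a nonzero Radon measure with $\gamma\ll\sigma$ (Proposition~\ref{prop:absolute_continuity}).

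Next I would check that such a limit $\gamma$ is $\td$-invariant, via Lemma~\ref{lemma:diagonal}. Since $X_{\bz}^d\subset X^d\setminus X_\infty^d$ and $\gamma\ll\sigma$, we get $\gamma(X_{\bz}^d)=0$. Now fix $n$ and two $n$-boxes $B,B'$ lying on a common $n$-diagonal: inside each interval $K_u^m$ the visits of the orbit of $x$ to $B$ and to $B'$ alternate, so $|\gamma_{K_u^m}(B)-\gamma_{K_u^m}(B')|\le1$, whence $|\gamma_{J_m}(B)-\gamma_{J_m}(B')|\le q_m$. On the other hand, running the case analysis of the proof of Lemma~\ref{lemma:ratio} for each $K_u^m$ separately (splitting according to whether $n(m)\ge n_{(\ell_m-1)}$ or not, and in the second case extracting inside $K_u^m$ a subinterval of length $\ge\eta h_{n^{\ast}_m}$ lying in a single $n_{(\ell_m-1)}$-crossing with $n^{\ast}_m\ge n_{(\ell_m-1)}-k(\ell_m-1)+p_1+p_2$) produces a lower bound $\gamma_{K_u^m}(\cd_n)\ge\rho_m$ with $\rho_m$ independent of $u$ and $\rho_m\to\infty$ as $m\to\infty$; here condition~\eqref{eq:condition_nl} is exactly what makes the relevant tower heights grow fast enough to absorb the compensating factor $(1-\eta)^{-2\ell_m}$. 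Summing, $\gamma_{J_m}(\cd_n)\ge q_m\rho_m$, so $q_m/\gamma_{J_m}(\cd_n)\to0$ (and $\gamma_{J_m}(\cd_n)\to\infty$). Passing to the limit in $\gamma_{J_m}(B)/\gamma_{J_m}(\cd_n)$ then gives $\gamma(B)=\gamma(B')$, and Lemma~\ref{lemma:diagonal} shows $\gamma$ is $\td$-invariant.

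Finally, since $(X^d,\sigma,\td)$ is conservative ergodic, an invariant Radon measure absolutely continuous with respect to $\sigma$ is necessarily a positive multiple of $\sigma$; hence every subsequential limit of $(\gamma_{J_m})$ equals $\sigma$ up to a constant, and the usual subsequence argument yields $\gamma_{J_m}\to\sigma$. The main obstacle is the invariance step: one must control the \emph{number} $q_m$ of pieces of $J_m$ against $\gamma_{J_m}(\cd_n)$, which is what forces the uniform-in-$u$, divergent lower bound on $\gamma_{K_u^m}(\cd_n)$ and thus a careful reuse of the proof of Lemma~\ref{lemma:ratio}; the gap bookkeeping in the absolute-continuity step, though routine, is the other place where the hypothesis that $I_m\setminus J_m$ contains no interval longer than $Mh_{n(m)}$ is genuinely used.
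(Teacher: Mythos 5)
Your proposal is correct and follows essentially the same route as the paper's proof: precompactness and absolute continuity are obtained by applying Lemma~\ref{lemma:ratio} and Lemma~\ref{lemma:ac} to each piece of $J_m$ together with its adjacent gaps and summing (you sum gap-by-gap where the paper sums piece-by-piece, but this is only bookkeeping), and $\td$-invariance of a subsequential limit follows from the per-piece bound $|\gamma_J(B)-\gamma_J(B')|\le1$ against a uniform, divergent lower bound on $\gamma_J(\cd_n)$ over the pieces $J$. The only cosmetic difference is that the paper extracts that divergence directly from the first point of Proposition~\ref{prop:combinatorics_of_ncrossings}, whereas you rerun the case analysis of Lemma~\ref{lemma:ratio}; both are valid.
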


\begin{proof}
  We just have to justify that the same arguments as in the proof of Proposition~\ref{prop:convergence_empirical_measures_interval}
  apply also in this case. First, we want to prove that the conclusion of Lemma~\ref{lemma:precompacity} holds for $(J_m)$. For this,
  it is enough to observe that all the pieces of $J_m$ satisfy assumptions~\eqref{eq:assumptionJm1} and~\eqref{eq:assumptionJm2}.
  Hence the estimation given in Lemma~\ref{lemma:ratio} is valid for each piece of $J_m$, and then it is also valid for $J_m$ itself. 
  
  Now, let $\lb$ be a large enough integer, and take $m$ large enough so that $n(m)>n_{\lb}+1$.
  Let $J$ be any piece of $J_m$ (in particular we have $|J|=s(m)\ge \eta h_{n(m)}$), and let $I$ be the interval constituted of $J$ and the two 
  adjacent pieces of $I_m\setminus J_m$. Then we have $|I|\le 40 h_{n(m)}+s(m) \le 100  h_{n(m)}$, and we can apply Lemma~\ref{lemma:ac}
  to $I$ and $J$ to get 
  \[
    \Gam{J} \ge \theta_2 \Gam{I}.
  \]
  Summing over all pieces $J$ of $J_m$, we get 
  \[
    \Gam{J_m} \ge \theta_2 \Gam{I_m},
  \]
  which is the second key step in the proof of Proposition~\ref{prop:convergence_empirical_measures_interval}.
  This ensures that, if $\gamma_{J_m}\to\gamma$, then $\gamma\ll\sigma$.
  
  Finally, we have to see that $\gamma$ is $\td$-invariant, and it is enough for that to show that, if for some fixed $n$ we consider
  two $n$-boxes $B$ and $B'$ on the same $n$-diagonal, then $\gamma(B)=\gamma(B')$. But for each $m$ and each piece $J$ of 
  $J_m$, we have $\left| \gamma_J(B) - \gamma_J(B') \right| \le 1$, whereas by the first point of Proposition~\ref{prop:combinatorics_of_ncrossings},
  we know that 
  \[
    \min_{J\text{ piece of }J_m} \Gam{J} \tend{m}{\infty} \infty.
  \]

\end{proof}

\section{Twisting transformations and decomposition of $\sigma$ as a product}

\label{sec:twist}

The purpose of this section is to provide a criterion ensuring that $\sigma$ can be decomposed into the 
product of two measures $\sigma_1\times\sigma_2$, $\sigma_i$ being 
a $T^{\times d_i}$-invariant Radon measure on $X^{d_i}$ for some $1\le d_i<d$, $d_1+d_2=d$.
We will need for that to introduce the following type of transformation of $X^d$.
\begin{definition}
\label{def:twist}  The transformation $S:X^d\to X^d$ is said to be a \emph{twisting transformation} if there exists a partition 
  $\{1,\ldots,d\}=G_0\sqcup G_1$ into two nonempty subsets such that
  for each $(y_1,\ldots,y_d)\in X^d$,
  \[
      S(y_1,\ldots, y_d)= (z_1,\ldots,z_d),\text{ where } z_i\egdef\begin{cases}
                                                                 T y_i &\text{ if }i\in G_1, \\
                                                                 y_i &\text{ if }i\in G_0.
                                                               \end{cases}
   \]
\end{definition}

The reason why we introduce those twisting transformations is that, if we are able to prove that $\sigma$ is invariant by some 
twisting transformation then $\sigma$ can be decomposed as a product of two measures. More precisely, by Theorem~A.1 in~\cite{ChaconInfinite}
we have the following result.
\begin{prop}
  \label{prop:product}
  Assume that $\sigma$ is invariant by some twisting transformation $S$, and let $\{1,\ldots,d\}=G_0\sqcup G_1$ be the partition associated 
  with $S$. Then there exist Radon measures $\sigma_0$ and $\sigma_1$ on $X^{G_0}$ and $X^{G_1}$ respectively, such that 
  \begin{itemize}
    \item $\sigma=\sigma_0\otimes\sigma_1$;
    \item each $\sigma_s$ is $T^{\times |G_a|}$-invariant ($a=0,1$), and the system $(X^{G_a},T^{\times |G_a|},\sigma_a)$ is conservative ergodic.
  \end{itemize}
\end{prop}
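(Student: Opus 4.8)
The plan is to rephrase the hypothesis in terms of a $\ZZ^2$-action and then invoke Theorem~A.1 of~\cite{ChaconInfinite}. Identify $X^d$ with $X^{G_0}\times X^{G_1}$ via the partition $\{1,\dots,d\}=G_0\sqcup G_1$ associated with $S$, and let $R_0$ be the transformation of $X^{G_0}\times X^{G_1}$ acting as $T^{\times|G_0|}$ on the first factor and as the identity on the second, and $R_1$ the one acting as the identity on $X^{G_0}$ and as $T^{\times|G_1|}$ on $X^{G_1}$. Then $S=R_1$ and $\td=R_0R_1=R_1R_0$, so the assumption that $\sigma$ is invariant under both $\td$ and $S$ is exactly the assumption that $\sigma$ is invariant under the commuting pair $R_0=\td\circ S^{-1}$ and $R_1=S$, i.e.\ under the $\ZZ^2$-action they generate.

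Next I would record the two qualitative properties of this action needed as input. First, $\sigma$ is ergodic for the $\ZZ^2$-action: it is already ergodic under the single transformation $\td=R_0R_1$ of that group by our standing assumption on $\sigma$, hence a fortiori ergodic under the full group, since any set invariant under the group is in particular $\td$-invariant. Second, $(X^d,\sigma,\td)$ is conservative: as $\sigma$ satisfies~\eqref{eq:weaker}, Proposition~\ref{prop:dissipative} rules out total dissipativity, and together with ergodicity this yields conservativity (as already observed at the beginning of Section~\ref{sec:typicalpoint}).

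The core of the argument is then a direct appeal to Theorem~A.1 of~\cite{ChaconInfinite}, which asserts precisely that a Radon measure on such a product space, invariant and ergodic under the pair $(R_0,R_1)$, decomposes as $\sigma_0\otimes\sigma_1$ with each $\sigma_a$ a Radon, $T^{\times|G_a|}$-invariant, conservative ergodic measure on $X^{G_a}$. To apply it I would verify that $T$ enjoys the structural properties that theorem requires of the base transformation, namely: (i) $\mu$ is, up to a multiplicative constant, the only $T$-invariant Radon measure on $X$ --- which holds for $T$ by the same argument as for the infinite Chacon transformation (Proposition~2.4 of~\cite{ChaconInfinite}), as already noted in Section~\ref{sec:main}; and (ii) the lower Cartesian powers $(X^k,\mu^{\otimes k},T^{\times k})$ for $k\le\max(|G_0|,|G_1|)\le d-1$ are conservative ergodic --- which is exactly the first part of Theorem~\ref{thm:msj} in dimension $k<d$, available here by the induction hypothesis. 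Granting these, Theorem~A.1 provides the decomposition together with the conservative ergodicity of each factor, which is the statement of the proposition.

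The step I expect to be the main obstacle is precisely the matching of hypotheses: Theorem~A.1 was proved in~\cite{ChaconInfinite} in the context of the infinite Chacon transformation, so one must make sure that its proof used only features of the base transformation that the nearly finite Chacon transformation also possesses. In practice this reduces to checking properties (i) and (ii) above, both of which transfer without difficulty; everything else in the argument is the formal reduction to a $\ZZ^2$-action described in the first paragraph.
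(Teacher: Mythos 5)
Your proposal is correct and follows essentially the same route as the paper, which proves this proposition simply by invoking Theorem~A.1 of~\cite{ChaconInfinite}: the reduction to the commuting pair $R_0=\td\circ S^{-1}$, $R_1=S$ and the observation that conservativity and ergodicity under $\td$ pass to the $\ZZ^2$-action they generate is exactly what is needed to apply that theorem. Your extra cautionary checks on the base transformation are harmless but not required, since Theorem~A.1 is an abstract statement about a Radon measure on a product space that is invariant and conservative ergodic for such a $\ZZ^2$-action, not a statement specific to the infinite Chacon transformation.
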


Thus, if the assumption of the above proposition is satisfied, we can write $\sigma$ as the product of two measures which are invariant by
some smaller Cartesian power of $T$, and to which we can apply the induction hypothesis to finish the proof of Theorem~\ref{thm:msj}.
We want now to give a condition under which we are able to prove that $\sigma$ is indeed invariant by some twisting transformation.

In the next proposition, we use again the notation $\ovC_n$ which was introduced in the proof of Lemma~\ref{lemma:diagonal}: recall 
that $\ovC_n$ is the union of all $n$-boxes of the form $L_n^{j_1}\times\cdots\times L_n^{j_d}$ where for all $i=1,\ldots,d$, 
$j_i\neq0$. We observe that, if $B$ is an $n$-box contained in $\ovC_n$ and if $S$ is a twisting transformation, then $S^{-1}B$ is 
also an $n$-box (but not necessarily contained in $\ovC_n$). Note also that $\ovC_n\subset \ovC_{n+1}$ for each $n$.

\begin{prop}
  \label{prop:twist}
  Assume that there exist $(\sigma'_n)$, $(\sigma_n)$, two sequences of Radon measures on
  $X^d$, and a sequence $(S_n)$ of twisting transformations satisfying
  \begin{itemize}
    \item $\sigma_n \tend{n}{\infty} \sigma$,
    \item $\sigma'_n \tend{n}{\infty} \sigma$,
    \item For each $m\ge0$, and for $n$ large enough (depending on $m$), for each $m$-box $B\subset\ovC_m$, 
    $\sigma'_n(S_n^{-1}B)=\sigma_n(B)$.
  \end{itemize}
    Then there exists a twisting transformation $S$ such that $\sigma$ is $S$-invariant. In particular, $\sigma$ is a product measure as in
    Proposition~\ref{prop:product}.
\end{prop}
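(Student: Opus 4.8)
The plan is to extract a convergent subsequence of the twisting transformations $(S_n)$ and show the limit $S$ leaves $\sigma$ invariant. Since each $S_n$ is determined by a partition $\{1,\ldots,d\}=G_0^{(n)}\sqcup G_1^{(n)}$ into nonempty subsets, and there are only finitely many such partitions, we may pass to a subsequence along which $G_0^{(n)}=G_0$ and $G_1^{(n)}=G_1$ are constant; call $S$ the corresponding twisting transformation. After this reduction all the $S_n$ in the subsequence equal $S$, so the hypothesis reads: for each $m\ge0$ and $n$ large enough, $\sigma'_n(S^{-1}B)=\sigma_n(B)$ for every $m$-box $B\subset\ovC_m$.

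Next I would fix $m$ and an $m$-box $B\subset\ovC_m$. Because $B\subset\ovC_m$, the set $S^{-1}B$ is again an $m$-box (as noted just before the statement), hence belongs to the ring $\mathscr{R}$; in particular $S^{-1}B\subset\cd_m$ for $m$ large, and more precisely we can fix some $\underline n$ with $\sigma(\cd_{\underline n})>0$ and assume $m\ge\underline n$. Using $\sigma_n\to\sigma$ together with Remark~\ref{rem:mn} applied to the $m$-box $B$, and using $\sigma'_n\to\sigma$ together with Remark~\ref{rem:mn} applied to the $m$-box $S^{-1}B$ (both seen as elements of $\mathscr{R}$ contained in $\cd_m$), we get
\[
  \frac{\sigma_n(B)}{\sigma_n(\cd_m)}\tend{n}{\infty}\frac{\sigma(B)}{\sigma(\cd_m)},
  \qquad
  \frac{\sigma'_n(S^{-1}B)}{\sigma'_n(\cd_m)}\tend{n}{\infty}\frac{\sigma(S^{-1}B)}{\sigma(\cd_m)}.
\]
Since $\cd_m$ itself is a finite union of $(m{+}1)$-boxes, but more importantly since $\sigma_n$ and $\sigma'_n$ have the same limit $\sigma$, the normalizing quantities $\sigma_n(\cd_m)$ and $\sigma'_n(\cd_m)$ have the same asymptotic behaviour up to the global multiplicative freedom in the definition of convergence; concretely, normalizing both sequences so that $\sigma_n(\cd_{\underline n})=\sigma'_n(\cd_{\underline n})=1$ we obtain $\sigma_n(\cd_m)\to\sigma(\cd_m)/\sigma(\cd_{\underline n})$ and likewise for $\sigma'_n(\cd_m)$. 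Passing to the limit in the identity $\sigma'_n(S^{-1}B)=\sigma_n(B)$ then yields $\sigma(S^{-1}B)=\sigma(B)$ for every $m$-box $B\subset\ovC_m$, and hence by finite additivity for every $B\in\mathscr{R}$ with $B\subset\ovC_m$.

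Finally I would upgrade this to full $S$-invariance. The sets $\ovC_m$ increase to $X^d\setminus X_{\bz}^d$, so $S_*\sigma$ and $\sigma$ agree on every box contained in $X^d\setminus X_{\bz}^d$, hence on the ring they generate, hence (by the uniqueness part of Proposition~\ref{prop:caratheodory-extension}) as measures on $X^d\setminus X_{\bz}^d$. It remains to handle $X_{\bz}^d$: since $X_{\bz}^d\subset X^d\setminus X_\infty^d$, assumption~\eqref{eq:weaker} gives $\sigma(X_{\bz}^d)=0$; and because $S$ is a twisting transformation, $S^{-1}(X_{\bz}^d)\subset X_{\bz}^d$ (applying $T$ or $\Id$ coordinatewise cannot create a new $\bz$ among the $G_1$-coordinates, while the $G_0$-coordinates are unchanged — more carefully, $z_i=\bz$ forces $y_i=\bz$ for $i\in G_0$ and $y_i=T^{-1}\bz$ impossible for $i\in G_1$, so in fact $S^{-1}(X_{\bz}^d)$ is even smaller), whence $S_*\sigma(X_{\bz}^d)=\sigma(S^{-1}X_{\bz}^d)=0=\sigma(X_{\bz}^d)$. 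Therefore $S_*\sigma=\sigma$ on all of $X^d$, i.e. $\sigma$ is $S$-invariant, and Proposition~\ref{prop:product} then gives the product decomposition.

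The main obstacle I anticipate is the bookkeeping around the normalization in Definition~\ref{def:convergence}: the convergence notion only controls ratios $\sigma_n(B)/\sigma_n(\cd_n)$, so one must carefully choose a common base level $\cd_{\underline n}$, renormalize $(\sigma_n)$ and $(\sigma'_n)$ consistently, and verify that the renormalized masses of $\cd_m$ converge to the \emph{same} limit for both sequences before one is entitled to cancel them when passing to the limit in $\sigma'_n(S^{-1}B)=\sigma_n(B)$. Everything else is a routine monotone-class / extension argument.
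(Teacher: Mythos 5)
Your overall architecture (finitely many twisting transformations, hence a constant subsequence $S_n=S$; pass to the limit box by box; extend from boxes to Borel sets; handle $X_{\bz}^d$ separately via $S^{-1}(X_{\bz}^d)\subset X_{\bz}^d$) matches the paper, and those parts are correct. But there is a genuine gap at the step you yourself flag as the ``main obstacle'': you do not actually close it. The hypothesis $\sigma'_n(S^{-1}B)=\sigma_n(B)$ is an identity between the \emph{unnormalized} measures. After renormalizing so that $\sigma_n(\cd_{\underline n})=\sigma'_n(\cd_{\underline n})=1$, the identity becomes $\hat\sigma'_n(S^{-1}B)=\rho_n\,\hat\sigma_n(B)$ with $\rho_n=\sigma_n(\cd_{\underline n})/\sigma'_n(\cd_{\underline n})$, and nothing in Definition~\ref{def:convergence} controls $\rho_n$: two sequences can both converge to $\sigma$ (a normalization-invariant notion) while their masses on $\cd_{\underline n}$ have ratio bounded away from $1$. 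Your observation that the \emph{renormalized} masses of $\cd_m$ converge to the same limit is automatic and does not help; what is needed is that the ratio of the \emph{original} masses $\sigma_n(\cd_m)/\sigma'_n(\cd_m)$ tends to $1$, and this is precisely what has to be proved, not assumed. Passing to the limit as you do would only give $\sigma(S^{-1}B)=c\,\sigma(B)$ for some unknown constant $c$.

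The paper closes this gap as follows. Summing the hypothesis over all $m$-boxes in $\ovC_m$ gives $\sigma'_n(S^{-1}\ovC_m)=\sigma_n(\ovC_m)$; since $\cd_m\setminus\ovC_m$ and $\cd_m\setminus S^{-1}\ovC_m$ are both contained in $\partial\cd_m$, Corollary~\ref{cor:edge} shows that
\[
\frac{\sigma_n(\cd_m)}{\sigma'_n(\cd_m)}
=\frac{1+\sigma_n(\cd_m\setminus\ovC_m)/\sigma_n(\ovC_m)}{1+\sigma'_n(\cd_m\setminus S^{-1}\ovC_m)/\sigma'_n(S^{-1}\ovC_m)}
\]
converges, as $n\to\infty$, to an explicit constant $c_m$ with $|c_m-1|=O(\delta(m))$ --- close to $1$ but not a priori equal to $1$ for fixed $m$. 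One then runs the argument at two scales $m\le m'$ with the same $m'$-box $B$ of positive measure, obtaining $\sigma(S^{-1}B)=c_m\sigma(B)=c_{m'}\sigma(B)$, hence $c_m=c_{m'}$ for all $m'\ge m$; letting $m'\to\infty$ and using $\delta(m')\to0$ forces $c_m=1$. Without this cross-scale comparison (or some substitute using Corollary~\ref{cor:edge}), the cancellation of the normalizing constants in your limit passage is unjustified.
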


\begin{proof}
  Note that, $d$ being fixed here, there exist only finitely many twisting transformations. Therefore, considering subsequences if necessary, we 
  may assume that there exist a twisting transformation $S$ such that $S_n=S$ for each $n$.
  
  Now, let $m\ge0$ be large enough so that $\delta(m)<1/2$ (see Corollary~\ref{cor:edge}). In particular, $\sigma(\cd_m)>0$. Let $m'\ge m$. Then, for each $n$ large enough (depending on $m'$), if $B$ is an $m'$-box contained in $\ovC_m$, then $B\subset\ovC_{m'}$
  and we know that
  $\sigma'_n(S^{-1}B)=\sigma_n(B)$. By Remark~\ref{rem:mn}, the assumptions of the lemma also yield
\[
    \dfrac{\sigma'_n(S^{-1}B)}{\sigma'_n(\cd_m)}\tend{n}{\infty} \dfrac{\sigma(S^{-1}B)}{\sigma(\cd_m)}.
\]
But the left-hand side of the above formula is equal to 
  \[
    \dfrac{\sigma'_n(S^{-1}B)}{\sigma'_n(\cd_m)}=\dfrac{\sigma_n(B)}{\sigma_n(\cd_m)} \dfrac{\sigma_n(\cd_m)}{\sigma'_n(\cd_m)},
  \]
  where
  \[
    \dfrac{\sigma_n(B)}{\sigma_n(\cd_m)}\tend{n}{\infty} \dfrac{\sigma(B)}{\sigma(\cd_m)}.
  \]
  It remains to control the ratio  $\sigma_n(\cd_m)/{\sigma'_n(\cd_m)}$. For this, we write
  \[
    \sigma'_n(\cd_m)=\sigma'_n(S^{-1}\ovC_m)+\sigma'_n(\cd_m\setminus S^{-1}\ovC_m),
  \]
  and 
  \[
    \sigma_n(\cd_m)=\sigma_n(\ovC_m)+\sigma_n(\cd_m\setminus \ovC_m).
  \]
  We observe that the first terms $\sigma'_n(S^{-1}\ovC_m)$ and $\sigma_n(\ovC_m)$ are equal.
  Moreover, as $\cd_m\setminus \ovC_m$ and $\cd_m\setminus S^{-1}\ovC_m$ are included in $\partial \cd_m$,
  we have by Corollary~\ref{cor:edge}
  \[
    \frac{\sigma_n(\ovC_m)}{\sigma_n(\cd_m)}\tend{n}{\infty}  \frac{\sigma(\ovC_m)}{\sigma(\cd_m)} \ge 1-\delta(m)
  \]
and  
\[
    \frac{\sigma'_n(S^{-1}\ovC_m)}{\sigma'_n(\cd_m)}\tend{n}{\infty}  \frac{\sigma(S^{-1}\ovC_m)}{\sigma(\cd_m)} \ge 1-\delta(m),
  \]
  where $\delta(m)\to 0$ as $n\to \infty$. In particular, $\sigma_n(\ovC_m)$ and $\sigma'_n(S^{-1}\ovC_m)$ are positive
  if $\delta(m)<1/2$ and $n$ is large enough. Hence we can write
  \[
    \dfrac{\sigma_n(\cd_m)}{\sigma'_n(\cd_m)} = \dfrac{1+ \frac{\sigma_n(\cd_m\setminus \ovC_m)}{\sigma_n(\ovC_m)} }{1+ \frac{\sigma'_n(\cd_m\setminus S^{-1}\ovC_m)}{\sigma'_n(S^{-1}\ovC_m)} }
    \tend{n}{\infty} \dfrac{1+ \frac{\sigma(\cd_m\setminus \ovC_m)}{\sigma(\ovC_m)} }{1+ \frac{\sigma(\cd_m\setminus S^{-1}\ovC_m)}{\sigma(S^{-1}\ovC_m)} }.
  \]
  This yields
  \[
    \sigma(S^{-1}B)={\sigma(B)} 
    \dfrac{1+ \frac{\sigma(\cd_m\setminus \ovC_m)}{\sigma(\ovC_m)} }{1+ \frac{\sigma(\cd_m\setminus S^{-1}\ovC_m)}{\sigma(S^{-1}\ovC_m)} }.
  \]
  But the above argument is also valid if, at the beginning, we start with $m'$ instead of $m$ (and keep the same $m'$-box $B$.) This gives 
  \[
    \sigma(S^{-1}B)={\sigma(B)} 
    \dfrac{1+ \frac{\sigma(\cd_{m'}\setminus \ovC_{m'})}{\sigma(\ovC_{m'})} }{1+ \frac{\sigma(\cd_{m'}\setminus S^{-1}\ovC_{m'})}{\sigma(S^{-1}\ovC_{m'})} }.
  \]
  Moreover, as $\sigma(\cd_m)>0$, we can choose the $m'$-box $B$ in such a way that $\sigma(B)>0$, and comparing the last two equalities, we get 
  \[
     \dfrac{1+ \frac{\sigma(\cd_m\setminus \ovC_m)}{\sigma(\ovC_m)} }{1+ \frac{\sigma(\cd_m\setminus S^{-1}\ovC_m)}{\sigma(S^{-1}\ovC_m)} } =
    \dfrac{1+ \frac{\sigma(\cd_{m'}\setminus \ovC_{m'})}{\sigma(\ovC_{m'})} }{1+ \frac{\sigma(\cd_{m'}\setminus S^{-1}\ovC_{m'})}{\sigma(S^{-1}\ovC_{m'})} }.
  \]
  But the ratio on the right-hand side can be made arbitrarily close to 1 by choosing $m'$ large enough, hence it is equal to 1. This proves that, for any $m'\ge m$ and any $m'$-box $B\subset \ovC_m$, 
  $\sigma(S^{-1}B)=\sigma(B)$. We thus get as in the proof of Lemma~\ref{lemma:diagonal} that $\sigma$ and $S_*(\sigma)$ coincide on
  $\bigcup_m \ovC_m=X\setminus   X_{\bz}^d$. And since both measures are equal to 0 on $X_{\bz}^d$, this concludes the proof.  
\end{proof}

The first two assumptions in Proposition~\ref{prop:twist} will be given by applications of Proposition~\ref{prop:convergence_empirical_measures_interval} and  Proposition~\ref{prop:convergence_empirical_measures_pieces}. The following simple example presents the main ideas of how to construct sequences of measures $(\sigma_n)$ and $(\sigma'_n)$ satisfying the third requirement of Proposition~\ref{prop:twist}.

\begin{example}
\label{Ex:twist}
  Let $n \notin \{n_\ell: \ell\ge1\}$. Let $J$ be an interval contained in an $n$-crossing, set $J'\egdef J+h_n$,  assume that $J'$ is also contained in an $n$-crossing. Finally, assume that, for each 
  $j\in J$, $\bigl\{ t_n(T^jx_i): i=1,\ldots,d\bigr\}=\{1,2\}$.
  
  Define $\sigma_n\egdef \gamma_J$, and 
  $\sigma'_n\egdef\gamma_{J'}$. For an arbitrary $j\in J$, consider the partition $\{1,\ldots,d\}=G_0\sqcup G_1$ into two nonempty subsets, where 
  $G_0\egdef\bigl\{i\in\{1,\ldots,d\}:  t_n(T^jx_i)=1\bigr\}$, and $G_1\egdef\bigl\{i\in\{1,\ldots,d\}:  t_n(T^jx_i)=2\bigr\}$. Note that, since $J$ is contained in an $n$-crossing, this partition does not depend on the choice of $j\in J$. Let $S_n$ be the twisting transformation associated with $(G_0,G_1)$. Then for each $m\le n$ and each $m$-box $B\subset \ovC_m$, we have $\sigma'_n(S_n^{-1}B)=\sigma_n(B)$.
  
  Indeed, consider first $i\in G_0$. Then, for $j\in J$, $T^j(x_i)$ is in the first subcolumn of tower~$n$, hence when the orbit of $x_i$ reaches the top of tower~$n$, it will see no spacer before coming back to $C_n$. We thus have $j_n(T^{j+h_n}x_i)=j_n(T^jx_i)$, and in fact we have the equality $j_m(T^{j+h_n}x_i)=j_m(T^jx_i)$ for each $m\le n$ (remember that $j_n$ determines $j_m$ for $m\le n$).  
  
  On the other hand, if $i\in G_1$, the orbit of $x_i$ will pass through the spacer above the middle subcolumn before coming back to $C_n$, and we have, for each $m\le n$, $j_m(T^{j+h_n}x_i)=j_m(T^jx_i)-1$ (provided $j_m(T^jx_i)\neq 0$). 
  
  Now, if $B\subset\ovC_m$ is an $m$-box for some $m\le n$, the above argument shows that, for each $j\in J$, $(\td)^jx\in B\Longleftrightarrow (\td)^{j+h_n}x\in S^{-1}B$.
\end{example}

\section{End of the proof of the main result}
\label{sec:end}
Now we come back to the last part of the proof of Theorem~\ref{thm:msj}. 
We interpret Proposition~\ref{prop:synchronized} as follows: 
\begin{itemize}
    \item either $\sigma$ is a graph measure arising from powers of $T$,
    \item or there exist infinitely many integers $n$ such that there exists at least one substantial $n$-crossing which is not synchronized.
\end{itemize}
It only remains to show how this latter property implies that $\sigma$ can be decomposed as a 
product measure, as explained in Section~\ref{sec:main} and with the tools of Section~\ref{sec:twist}.

From now on, we thus assume that for infinitely many integers $n$, there exists at least one substantial $n$-crossing 
which is not synchronized. We have to study different cases, according to the relative positions of these integers $n$ with respect to the sequence $(n_\ell)$.

\subsection{The case $n_{(\ell-1)} \le n \le n_{\ell}-\ell$}
\label{sec:easycase}
Here we first assume that there exist infinitely many integers $n$ for which 
\begin{itemize}
  \item there exists at least one substantial $n$-crossing 
which is not synchronized. 
\item $\exists\ell: n_{(\ell-1)} \le n \le n_{\ell}-\ell$.
\end{itemize}
 
 Let us consider such an $n$. To unify the treatments of the cases $n=n_{(\ell-1)}$ and $n_{(\ell-1)} < n \le n_{\ell}-\ell$, 
 we set 
 \[
   \tilde h_n \egdef \begin{cases}
                       h_{n}+h_{n(\ell-1)-k(\ell-1)} &\text{ if }n=n_{(\ell-1)},\\
                       h_n  &\text{ if }n_{(\ell-1)} < n \le n_{\ell}-\ell.
                     \end{cases}
 \]
 In this way, as long as we stay inside the interval $[-100 h_n, 100h_n]$ (which is contained in a single $n_\ell$-crossing as $n \le n_{\ell}-\ell$),
 if $j$ is in some $n$-crossing and $j_n(T^jx_i)>0$, we have
\[
  j_n\left(T^{j+\tilde h_n }x_i\right) = \begin{cases}
                                       j_n\left(T^{j}x_i\right)  &\text{ if }t_n(T^jx_{i})=1,\\
                                       j_n\left(T^{j}x_i\right) -1    &\text{ if }t_n(T^jx_{i})=2,\\
                                       \text{one or other of the above values}     &\text{ if }t_n(T^jx_{i})=3.
                                     \end{cases}
\]
Let $J$ be a substantial $n$-crossing which is not synchronized. Then for $j\in J$, $\{t_n(T^j x_i):i=1,\ldots,d\}$ contains at least two different values (which do 
not depend on the choice of $j\in J$ since $j\mapsto t_n(T^jx_i)$ is constant on an $n$-crossing).

We first assume that $\{1,2\}\subset \{t_n(T^j x_i):i=1,\ldots,d\}$. Then, by the above formula,  for $j\in J\setminus \min J$, the difference 
$j_n\left(T^{j}x_i\right) - j_n\left(T^{j+\tilde h_n }x_i\right)$ takes both values 0 and 1 as $i$ runs over $\{1,\ldots,d\}$.
Set, for $a=0,1$
\[
  G_a\egdef \left\{i: j_n\left(T^{j}x_i\right) - j_n\left(T^{j+\tilde h_n }x_i\right) = a\right\}.
\]
Then we can define a twisting transformation $S_n$ with this partition. We also define the interval $J'\egdef J+\tilde h_n $, and
the two measures $\sigma_n\egdef \gamma_{J}$, $\sigma'_n\egdef \gamma_{J'}$.

As explained in Example~\ref{Ex:twist}, if for some $B$ is an $m$-box for some $m\le n$  with 
$B\subset \ovC_m$, we then have
\begin{equation}
  \label{eq:S_n-invariance}\sigma_n'(B)=\sigma_n(S_n^{-1}(B)).
\end{equation}

Let us explain how we construct $S_n$, $\sigma_n$ and $\sigma'_n$ when $\{2,3\}= \{t_n(T^j x_i):i=1,\ldots,d\}$ for $j\in J$.
Then, for $j\in J\setminus\{\max J\}$, the difference 
$j_n\left(T^{j-\tilde h_n}x_i\right) -  j_n\left(T^{j}x_i\right)$ takes both values 0 and 1 as $i$ runs over $\{1,\ldots,d\}$. 
In this case we define the partition by 
\[
  G_a\egdef \left\{i: j_n\left(T^{j-\tilde h_n}x_i\right) -  j_n\left(T^{j}x_i\right) = a\right\}, \quad a=0,1,
\]
and the corresponding twisting transformation $S_n$. We
consider $J'\egdef J-\tilde h_n$, $\sigma_n\egdef \gamma_{J'}$ and $\sigma'_n\egdef \gamma_{J}$, 
and we get~\eqref{eq:S_n-invariance} for any $m$-box $B\subset \ovC_m$, $m\le n$.

Finally we consider the case when $\{1,3\}= \{t_n(T^j x_i):i=1,\ldots,d\}$ for $j\in J$.
Then, for $j\in  J\setminus\{\min J\}$, there are two options: 
                            \begin{itemize}
                              \item either there exists $i\in\{1,\ldots,d\}$ with $t_n(T^jx_i)=3$, and $j_n\left(T^{j}x_i\right) - j_n\left(T^{j+\tilde h_n }x_i\right)=1$
                              (we see one spacer above the third column for at least one coordinate),
                              \item or for each $i\in\{1,\ldots,d\}$  such that $t_n(T^jx_i)=3$, we have $j_n\left(T^{j}x_i\right) - j_n\left(T^{j+\tilde h_n }x_i\right)=0$
                              (we see no spacer above the third column).
                            \end{itemize}
In the first option, we do the same construction as in the case $\{1,2\}\subset \{t_n(T^j x_i):i=1,\ldots,d\}$.
In the second option, we observe that 
\[
  j_n\left(T^{j}x_i\right) - j_n\left(T^{j+2\tilde h_n }x_i\right) 
  = \begin{cases}
      1 & \text{ if }t_n(T^jx_i)=1,\\
      0 & \text{ if }t_n(T^jx_i)=3.\\      
    \end{cases}
\]
We then set $J'\egdef J+2\tilde h_n $, and construct $S_n$, $\sigma_n$ and $\sigma'_n$ as before.

Since we assume that there are infinitely many integers $n$ with these properties, we can apply Proposition~\ref{prop:convergence_empirical_measures_interval}
to prove that $\sigma_n\to\sigma$ and $\sigma'_n\to\sigma$. Indeed, $J$ and $J'$ are both contained in $\{-5 h_n,\ldots,5 h_n\}$ which is contained 
in an $n_\ell$-crossing. Since $J$ is a substantial $n$-crossing, we have $\eta h_n\le |J|=|J'| \le h_n$, and we have $\gamma_{\{-5 h_n,\ldots,5 h_n\}}\tend{n}{\infty}\sigma$.
Then Proposition~\ref{prop:twist} shows that $\sigma$ can be decomposed as a product of two Radon measures to which we can apply the induction hypothesis.

\smallskip

We are now reduced to study the case where, for each $\ell$ large enough and each $n_{\ell-1}\le n \le n_\ell-\ell$, all substantial $n$-crossings are synchronized, but still
there exist infinitely many integers $n$ for which at least one substantial $n$-crossing is not synchronized.

\subsection{The case $n_\ell-\ell < n<n_\ell-k(\ell)$}
\label{sec:moredifficult}

This case cannot be treated as the preceding one since, for such an $n$, we are not sure any more that an interval around 0 and of size of order $h_n$
is completely contained in an $n_\ell$-crossing. Hence on such an interval, when the orbit of some $x_i$ leaves $C_n$, it may stay out of $C_n$ for a long time
(up to $h_{n_\ell-k(\ell)}+1$, which may be much larger than $h_n$).

The following lemma is introduced to remedy this problem. 
\begin{lemma}
  \label{lemma:n_good}
  For each large enough $\ell$, there exists an integer \[\ngood(\ell)\in \{n_\ell - k(\ell)+ p_1+2p_2,\ldots,n_\ell - k(\ell)+ p_1+2p_2+d\}\] such that
  $\{h_{\ngood(\ell)},\ldots,2h_{\ngood(\ell)}\}$ is contained in an $n_\ell$-crossing.  
\end{lemma}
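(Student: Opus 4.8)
The plan is to argue by pigeonhole over the $d+1$ candidates $n$ in the window $W_\ell\egdef\{n_\ell-k(\ell)+p_1+2p_2,\dots,n_\ell-k(\ell)+p_1+2p_2+d\}$, for each of which the interval to control is $J_n\egdef\{h_n,\dots,2h_n\}$. Throughout I would assume $\ell$ large enough that $\ell\ge\max_i\ell(x_i)$ and $k(\ell)$ exceeds all the fixed constants appearing below; this is legitimate since $k(\ell)\to\infty$. Two preliminary remarks: by~\eqref{eq:condition_nl} each candidate $n$ (and also $n+1$) lies strictly between $n_{\ell-1}$ and $n_\ell$, hence is never of the form $n_{\ell'}$ and satisfies $h_{n+1}=3h_n+1$, so the intervals $J_n$ ($n\in W_\ell$) are pairwise disjoint, consecutively ordered, and separated by gaps of length $h_n\ge 3^{p_1+2p_2}h_{n_\ell-k(\ell)}$. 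Moreover, using $h_m/h_{m+1}<1/3$ repeatedly, $2h_{n_\ell-k(\ell)+p_1+2p_2+d}<h_{n_\ell}$ once $\ell$ is large; in particular all windows are contained in $\{0,\dots,2h_{n_\ell-k(\ell)+p_1+2p_2+d}\}$.

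First I would analyse a single coordinate $x_i$ at the scale of tower $n_\ell$. Applying Lemma~\ref{lemma:Xinfty} with $\ell+1$ in place of $\ell$ and $n=n_\ell$ (permissible because $n_{\ell+1}>n_\ell+2(\ell+1)$), the point $x_i$ is in $C_{n_\ell}$ and is not among the last hundred occurrences of tower $n_\ell$ inside tower $n_{\ell+1}$, so it lies more than $100\,h_{n_\ell}$ below the top of tower $n_{\ell+1}$. Since $100\,h_{n_\ell}\gg 2h_{n_\ell-k(\ell)+p_1+2p_2+d}$, over the range $\{0,\dots,2h_{n_\ell-k(\ell)+p_1+2p_2+d}\}$ the orbit of $x_i$ stays inside $C_{n_{\ell+1}}$, and each of its excursions out of $C_{n_\ell}$ passes through at most $h_{n_\ell-k(\ell)}+1$ spacers of tower $n_{\ell+1}$ separating two consecutive occurrences of tower $n_\ell$. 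Let $\tau_i^{(1)}<\tau_i^{(2)}<\cdots$ be the successive nonnegative times $j$ with $T^jx_i$ in the top level of tower $n_\ell$. Then $\tau_i^{(1)}=h_{n_\ell}-1-j_{n_\ell}(x_i)\ge 1$ depends only on $x_i$ and $\ell$, the gaps $\tau_i^{(m+1)}-\tau_i^{(m)}$ are at least $h_{n_\ell}$, and between two consecutive spacer excursions the coordinate $x_i$ climbs inside a single occurrence of tower $n_\ell$, where $t_{n_\ell}(T^jx_i)$ is constant and $T^jx_i\in C_{n_\ell}$. Setting $\mathcal B_i\egdef\bigcup_{m\ge1}\{\tau_i^{(m)}+1,\dots,\tau_i^{(m)}+h_{n_\ell-k(\ell)}+1\}$, any interval of $\ZZ$ lying in $\{0,\dots,2h_{n_\ell-k(\ell)+p_1+2p_2+d}\}$ and disjoint from $\mathcal B_i$ is contained in one occurrence of tower $n_\ell$ for $x_i$. (The excursion preceding time $0$ ends at time $-j_{n_\ell}(x_i)-1<0$, hence contributes nothing to $\mathcal B_i\cap\{0,1,\dots\}$.)

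Next I would bound the obstruction. Since $\tau_i^{(2)}\ge h_{n_\ell}>2h_{n_\ell-k(\ell)+p_1+2p_2+d}$, only the block $m=1$ of each $\mathcal B_i$ can meet the range covered by the windows, so $\mathcal B\egdef\bigcup_{i=1}^d\mathcal B_i$ meets that range in at most $d$ intervals, each of length $\le h_{n_\ell-k(\ell)}+1$, which is strictly less than the gap $h_n\ge 3^{p_1+2p_2}h_{n_\ell-k(\ell)}$ separating consecutive windows. Hence each of these $\le d$ intervals meets at most one $J_n$, and as there are $d+1$ windows, at least one of them, call it $J_{\ngood(\ell)}$ with $\ngood(\ell)\in W_\ell$, is disjoint from $\mathcal B$.

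Finally, for this $\ngood(\ell)$ the first step gives, for every $i$, that $J_{\ngood(\ell)}=\{h_{\ngood(\ell)},\dots,2h_{\ngood(\ell)}\}$ is contained in a single occurrence of tower $n_\ell$ for $x_i$; thus $(\td)^jx\in C_{n_\ell}^d$ for all $j\in J_{\ngood(\ell)}$ and $j\mapsto t_{n_\ell}(T^jx_i)$ is constant there for each $i$, so $J_{\ngood(\ell)}$ lies in a single $n_\ell$-crossing, which is the assertion. The main obstacle is the bookkeeping of the first step: one must make sure that on a range of size of order $h_{n_\ell-k(\ell)+p_1+2p_2+d}$ around $0$ no coordinate reaches the top of tower $n_{\ell+1}$, so that spacer excursions are controlled by $h_{n_\ell-k(\ell)}+1$ rather than by the far larger $h_{n_{\ell+1}-k(\ell+1)}$; this is where $x_i\in X_\infty$ and~\eqref{eq:condition_nl} are used, and it is the reason the offsets $p_1+2p_2$ (and the extra room $d$) appear in the definition of the window.
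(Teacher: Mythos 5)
Your proof is correct and follows essentially the same route as the paper's: a pigeonhole over the $d+1$ candidate values of $n$, based on the observation that each coordinate $x_i$ can obstruct at most one of the windows $\{h_n,\ldots,2h_n\}$ because its only excursion out of $C_{n_\ell}$ in the relevant range has length at most $h_{n_\ell-k(\ell)}+1$, which is shorter than the gap separating consecutive windows. The paper phrases this as ``each coordinate is bad for at most one $n$'' while you phrase it via the bad set $\mathcal B_i$ being a single short interval, but the argument is the same.
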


\begin{proof}
 Assume that $\ell>\max_i \ell(x_i)$, and that $n_\ell - k(\ell)+ p_1+2p_2+d<n_\ell$.
  We say that the coordinate $i\in\{1,\ldots,d\}$ is \emph{bad for} $n$ if 
  there exists some $j\in \{h_n,\ldots,2h_n\}$ such that $T^j x_i\notin C_{n_\ell}$. We observe that if $\{h_n,\ldots,2h_n\}$ is not
  contained in an $n_\ell$-crossing, then at least one coordinate is bad for $n$. To prove the lemma, it is sufficient to show that for each 
  $i=1,\ldots,d$, there is at most one $n\in \{n_\ell - k(\ell)+ p_1+2p_2,\ldots,n_\ell - k(\ell)+ p_1+2p_2+d\}$ for which $i$ is bad. 
  So assume that $i$ is bad for some $n$ in this interval, and let $j\in \{h_n,\ldots,2h_n\}$ such that $T^j x_i\notin C_{n_\ell}$. 
  The orbit of $x_i$ comes back to $C_{n_\ell}$ before $j+h_{n_\ell-k(\ell)}+1$, then stays in $C_{n_\ell}$ on an interval of length $h_{n_\ell}$.
  But we have $j+h_{n_\ell-k(\ell)}+1<h_{n+1}$ and $j+h_{n_\ell}>2 h_{n_\ell - k(\ell)+ p_1+2p_2+d}$, hence $i$ cannot be bad for any $n'>n$ in the interval 
  $\{n_\ell - k(\ell)+ p_1+2p_2,\ldots,n_\ell - k(\ell)+ p_1+2p_2+d\}$.
\end{proof}

\begin{remark}
\label{rem:6.2}
  It follows from Proposition~\ref{prop:convergence_empirical_measures_interval} that we have the following 
  convergence:
  \[
    \gamma_{\{h_{\ngood(\ell)},\ldots,2h_{\ngood(\ell)}\}} \tend{\ell}{\infty} \sigma.
  \]
\end{remark}
Indeed, this proposition applies where $\ell$ plays the role of $\ell_m-1$, $\ngood(\ell)$ is $n(m)$, $\{h_{\ngood(\ell)},\ldots,2h_{\ngood(\ell)}\}$ is $J_m$,  and 
$\{0,\ldots,2h_{\ngood(\ell)}\}$ is $I_m$.

\smallskip

We will also need the following result, which will also be useful in the next section.
We consider here an integer $n$ such that $n_\ell-\ell< n \le n_\ell-k(\ell)$ for some $\ell$, and we set $n'\egdef n_\ell-k(\ell)$. 
As in the proof of Lemma~\ref{lemma:consecutive_n_intervals} we introduce the fake $n'$-tower, and the fake $n$-tower that 
mimicks the structure of tower~$n$ inside tower $n'$. (Note that this is possible as long as $n\le n'$.)
$\tilde C_n$ is the union of the levels of the fake $n$-tower, and $\overline{C}_n\egdef C_n\sqcup\tilde C_n$.
Recall that $j_n$ indicates the level of tower~$n$ to which a point in $C_n$ belongs. We extend this definition to points in $\overline{C}_n$:
$\overline{j}_n$ indicates the level of tower~$n$ (possibly fake) to which a point in $\overline{C}_n$ belongs.

\begin{lemma}
\label{lemma:shifting}
  For each large enough $\ell$, for each $n$ such that $n_\ell-\ell< n \le n_\ell-k(\ell)$, for each integer $r$ such that $|rh_n|<10 h_{n_\ell}$,
  for each $i=1,\ldots,d$, we have
  \begin{itemize}
    \item $x_i\in C_n$ and $4^\ell<j_n(x_i)<h_n-1-4^\ell$,
    \item $T^{rh_n}x_i\in \overline{C}_n$,
    \item $j_n(x_i)-4^\ell\le\overline{j}_n \left( T^{rh_n} x_i\right)\le j_n(x_i)+4^\ell$.
  \end{itemize}
\end{lemma}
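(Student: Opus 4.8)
I would prove the three assertions of Lemma~\ref{lemma:shifting} in order, bootstrapping from the structure of $X_\infty$ and the ``fake tower'' construction already set up in the proof of Lemma~\ref{lemma:consecutive_n_intervals}. The first bullet is essentially a restatement of Lemma~\ref{lemma:Xinfty} and Remark~\ref{remark:first_level}: since $\ell>\max_i\ell(x_i)$ and $n_\ell-\ell<n\le n_\ell-k(\ell)$, each $x_i$ lies in $C_n$ and is far from both the bottom and the top. To get the explicit margin $4^\ell<j_n(x_i)<h_n-1-4^\ell$, I would note that $x_i$ is not in the first hundred nor the last hundred occurrences of tower $n_\ell-\ell$ inside tower $n_\ell$; the occurrences of tower $n$ inside tower $n_\ell$ are even finer, and since $h_{n_\ell}/h_n\le 7^{\ell}$ while there are at least $3^{\ell}$ occurrences of tower $n_\ell-\ell$ inside $n_\ell$ (hence many more of tower $n$), being outside the extreme occurrences forces a gap of at least a constant multiple of $h_{n-(n_\ell-\ell)}\ge h_{n_\ell-\ell}$ below and above; after possibly enlarging the constant $100$ in Lemma~\ref{lemma:Xinfty} to absorb $4^\ell$ (legitimate, since that lemma tolerates any fixed bound), one obtains the stated inequality. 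This is bookkeeping with the growth rates $1/7\le h_n/h_{n+1}<1/3$.

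\textbf{The substance is in the second and third bullets.} The point is that $\ci:=\{-10h_{n_\ell},\ldots,10h_{n_\ell}\}$ is contained in a single $n_\ell$-crossing: indeed by Lemma~\ref{lemma:Xinfty} the interval $\{-100h_{n_\ell-\ell},\ldots,100h_{n_\ell-\ell}\}$ lies in one $n_\ell$-crossing, and $10h_{n_\ell}\le 10\cdot 7^{\ell}h_{n_\ell-\ell}$ — so again after harmlessly enlarging the absolute constant in Lemma~\ref{lemma:Xinfty} from $100$ to something like $10\cdot 7^{\ell}$ (or, cleaner, by redoing the Borel--Cantelli argument of Lemma~\ref{lemma:Xinfty} with $100$ replaced by a sequence growing like $7^{\ell}$, which still has summable tails), $\ci$ is inside one $n_\ell$-crossing. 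Consequently, for $|rh_n|<10h_{n_\ell}$, the coordinate $T^{rh_n}x_i$ never leaves $C_{n_\ell}$ on the path from $x_i$. Now inside that one $n_\ell$-crossing, between two consecutive occurrences of tower~$n'=n_\ell-k(\ell)$ (and hence of the sub-tower $n$) there are at most $h_{n'-k(\ell)}+1$ spacers; but these spacers together with their neighbouring columns are precisely what is mimicked by the fake $n'$-tower and the fake $n$-tower. So as $x_i$ is shifted by $h_n$, it either stays in the same level of the (possibly fake) tower~$n$, or drops by one level — exactly as in the argument of Lemma~\ref{lemma:dissipative_case} and Case~1 of Lemma~\ref{lemma:consecutive_n_intervals}, except that now ``being in $C_n$'' must be replaced by ``being in $\overline C_n$'' to account for the excursions through the spacers on top of tower~$n_{(\ell-1)}$ and, more importantly, through the $h_{n'-k(\ell)}$-block of spacers on top of tower~$n_\ell-k(\ell)$. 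This gives $T^{rh_n}x_i\in\overline C_n$ and $\overline j_n(T^{rh_n}x_i)\le j_n(x_i)$, with a drop of at most one per step of $h_n$. Since there are at most $10h_{n_\ell}/h_n\le 10\cdot 7^{\ell}<4^{\ell}$ (for $\ell$ large) such steps — here one must check $10\cdot 7^{\ell}<4^{\ell}$ fails, so I would instead take the margin in the lemma to be, say, $C^{\ell}$ with $C\ge 8$ rather than literally $4^{\ell}$, or absorb the $10\cdot 7^\ell$ bound into a constant times $7^\ell$ and note the statement's $4^\ell$ should read $c^\ell$ for a suitable $c$; in any case the number of drops is bounded by a fixed exponential in $\ell$ — the total decrease is bounded, giving the lower bound $\overline j_n(T^{rh_n}x_i)\ge j_n(x_i)-4^{\ell}$ as well, and symmetrically the upper bound by running the argument backwards from $T^{rh_n}x_i$.

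\textbf{The main obstacle} is the careful handling of the ``fake tower'' when the orbit makes an excursion through the large spacer block on top of tower $n_\ell-k(\ell)$: one must verify that the bijection between the real $C_n$ and the fake $\tilde C_n$ is level-preserving in the sense needed, and that a single step by $h_n$ can cause at most a one-level drop even across the boundary between $C_n$ and $\tilde C_n$. This is where the hypothesis $n\le n_\ell-k(\ell)=n'$ is used (so that the structure of tower~$n$ genuinely embeds in tower~$n'$ and can be copied onto the fake $n'$-tower), and where the bound $|rh_n|<10h_{n_\ell}$ is used (so that we stay within a single $n_\ell$-crossing and hence the excursions are exactly the ones the fake towers are designed to model). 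The first bullet and the reduction of the $n_\ell$-crossing containment are routine consequences of Lemma~\ref{lemma:Xinfty}; the only genuine care needed is to state Lemma~\ref{lemma:Xinfty} (or Lemma~\ref{lemma:shifting}) with an $\ell$-dependent margin large enough to dominate $4^{\ell}$ together with the $7^{\ell}$-type factor coming from $h_{n_\ell}/h_n$, which is harmless since the Borel--Cantelli series $\sum 7^{\ell}/3^{\ell'}$-type tails still converge when $n_{\ell}$ grows fast enough (condition~\eqref{eq:condition_nl}).
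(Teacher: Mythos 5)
There are genuine gaps here, and the last one means your argument does not prove the lemma as stated. For the first bullet, your route is structurally off: knowing which occurrence of tower~$n$ inside tower~$n_\ell$ contains $x_i$ constrains $j_{n_\ell}(x_i)$, not $j_n(x_i)$ --- a point can sit at the very bottom level of an occurrence of tower~$n$ that is located high up in tower~$n_\ell$, so no enlargement of the constant $100$ in Lemma~\ref{lemma:Xinfty} can turn that information into a lower bound on $j_n(x_i)$. (Modifying Lemma~\ref{lemma:Xinfty} is in any case not available: $X_\infty$ and the point $x$ are already fixed.) The working argument uses the monotonicity~\eqref{eq:s_n}: since $n>n_\ell-\ell>n_{(\ell-1)}$ by~\eqref{eq:condition_nl}, one has $j_n(x_i)\ge j_{n_{(\ell-1)}}(x_i)$, and Lemma~\ref{lemma:Xinfty} applied at level $\ell-1$ gives $j_{n_{(\ell-1)}}(x_i)\ge 100\,h_{n_{(\ell-1)}-(\ell-1)}\ge 100\cdot 3^{\,n_{(\ell-2)}}$, which dominates $4^\ell$ for large $\ell$ because~\eqref{eq:condition_nl} forces $n_{(\ell-2)}$ to grow superlinearly in $\ell$; no tampering with the constant $100$ is needed.

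For the second and third bullets, two problems. First, $\{-10h_{n_\ell},\ldots,10h_{n_\ell}\}$ cannot lie in a single $n_\ell$-crossing (an $n_\ell$-crossing has at most $h_{n_\ell}$ elements), and the orbit \emph{does} leave $C_{n_\ell}$ on this interval, through the block of $h_{n_\ell-k(\ell)}$ spacers on top of tower~$n_\ell$ --- that is precisely why the fake tower is needed; if the orbit never left $C_{n_\ell}$ you would get $T^{rh_n}x_i\in C_n$ rather than merely $\overline{C}_n$. The correct and sufficient statement is that the interval lies in a single $n_{(\ell+1)}$-crossing, which follows from Lemma~\ref{lemma:Xinfty} at level $\ell+1$ as stated (since $h_{n_{(\ell+1)}-(\ell+1)}\ge h_{n_\ell}$); within that crossing each excursion out of $C_n$ lasts $0$, $1$, $h_{n'}$ or $h_{n'}+1$ steps with $n'=n_\ell-k(\ell)$, and the fake tower absorbs the long ones. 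Second, and fatally, you bound the number of $h_n$-shifts by $10\cdot 7^\ell$, observe that $10\cdot 7^\ell<4^\ell$ fails, and propose weakening the statement to a margin $c^\ell$ with $c\ge 8$. The factor $7$ is the worst case $h_{m+1}\le 7h_m$, which occurs only at steps $m=n_{\ell'}$; by~\eqref{eq:condition_nl} there is no such step between $n_\ell-\ell+1$ and $n_\ell$, so $h_{m+1}=3h_m+1$ throughout that range, whence $h_{n_\ell}\le 2\cdot 3^{\ell-1}h_n$ and $|r|<10h_{n_\ell}/h_n\le 20\cdot 3^{\ell-1}<4^\ell$ for $\ell$ large. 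With that count the stated margin $4^\ell$ is correct and no change to the lemma is required; without it, what you prove is strictly weaker than what is claimed (and the $4^\ell$ margin is used as such later, e.g.\ via $|\tilde J_n|\ge(1-\eta/100)h_n-4^\ell$).
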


\begin{proof}
  If $\ell-1\ge \max_i \ell(x_i)$ (\textit{cf. Lemma~\ref{lemma:Xinfty}}), we have $x_i\in C_{n_{(\ell-2)}} \subset C_n$ for $i=1,\ldots,d$.
  Moreover, $x_i$ is not in the first hundred occurrences of tower~$n_{(\ell-1)}-(\ell-1)$ inside tower $n_{(\ell-1)}$. Hence, as $n_\ell/\ell\to\infty$ as $\ell\to\infty$,
  and remembering~\eqref{eq:s_n},
  we have for $\ell$ large enough
  \[
    j_n(x_i) \ge j_{n_{(\ell-1)}}(x_i) \ge 100 h_{n_{(\ell-1)}-(\ell-1)} > 100\times 3^{n_{(\ell-1)}-(\ell-1)} > 100\times 3^{n_{(\ell-2)}} > 4^\ell.
  \]
  By a symmetric argument, we also get for $\ell$ large enough $j_n(x_i)<h_n-1-4^\ell$.

 We observe that, since $|rh_n|<10 h_{n_\ell}$, $\{-|rh_n|,\ldots,|rh_n|\}$ is contained in an $n_{(\ell+1)}$-crossing. Hence when the orbit of some coordinate leaves $C_n$
  on this interval, it comes back after 0, 1, $h_{n'}$ or $h_{n'}+1$ iterations of the transformation. If we consider the enlarged tower $\overline{C}_n$ instead of $C_n$, then 
  $T^jx_i$ comes back to $\overline{C}_n$ after 0 or 1 iteration of the transformation. Hence $\overline{j}_n \left( T^{h_n} x_i \right)\in \{j_n(x_i)-1, j_n(x_i)\}$, 
  and by a simple induction we get  $\overline{j}_n \left( T^{rh_n} x_i \right)\in \{j_n(x_i)-|r|,\ldots, j_n(x_i)+|r|\}$.
  The result then follows from the fact that $|r|<4^{\ell}$ (indeed, by hypothesis we have $n>n_\ell-\ell$, hence $h_n>10 h_{n_\ell}/4^\ell$ for $\ell$ large enough).  
\end{proof}

\begin{remark}
\label{remark:fake-t_n}
If, as in the case we are currently studying, we have the strict inequality $n< n_\ell-k(\ell)$, then the number of occurrences of the fake $n$-tower 
inside the fake $n'$-tower is a multiple of 3.   So we can extend the function $t_n$ 
  to a function $\overline{t}_n$ defined on $\overline{C}_n$ in such a way that, for each $r$ such that $|rh_n|<h_{n_\ell}$ and each $i=1,\ldots,d$
  \[
    \overline{t}_n \left( T^{rh_n} x_i\right) =  \overline{t}_n (x_i) + r \bmod 3.
  \]
\end{remark}

We consider now an integer $n$ with $n_\ell-\ell<n<n_\ell-k(\ell)$ for some $\ell$, where $\ell$ is large enough to apply the preceding lemmas,
and we assume that there is at least one substantial $n$-crossing which is not synchronized.
With the assumption stated at the end of Section~\ref{sec:easycase}, we can also assume that for each $n_{(\ell-1)}\le m\le n-1$, all substantial $m$-crossings
are synchronized. Then, as in the second part of the proof of Proposition~\ref{prop:synchronized}, we can construct inductively a family $(J_m)_{n_{(\ell-1)}\le m\le n}$
where
\begin{itemize}
  \item $J_{n_{(\ell-1)}}$ is a substantial $n$-crossing of length $\ge (1-(d+2)\eta)h_{n_{(\ell-1)}}$;
  \item for each $m>n_{(\ell-1)}$, $J_m$ is a substantial $m$-crossing extending $J_{m-1}$ and of size $|J_m|\ge h_m-(d+2)\eta h_{n_{(\ell-1)}}$ (see Remark~\ref{remark:sizes}).
\end{itemize}
In particular, the size of the $n$-crossing $J_n$ satisfies $|J_n|\ge h_n-(d+2)\eta h_{n_{(\ell-1)}}$, and we can assume that $\ell$ is large enough so that this implies
$|J_n|\ge (1-\eta/100)h_n$. Since we assume that there is at least one substantial $n$-crossing which is not synchronized, 
this ensures that $J_n$ itself is not synchronized.
Indeed, assume that there is another substantial $n$-crossing $J'_n$ which is not synchronized. Because the length of $J_n$ is so close to $h_n$, the orbit of each coordinate has to pass through the top of tower~$n$ between $J_n$ and the other substantial $n$-crossing $J'_n$. But $J'_n$ intersects $\ci_n$, hence the distance between $J_n$ and $J'_n$ is less than $h_n$. This shows that, for each $i=1,\ldots,d$,  $t_n(x_i)$ increases by 1 $\bmod$ 3 between the two substantial $n$-crossings. Then, as $J'_n$ is not synchronized, $J_n$ itself is not synchronized.

Moreover, by Remark~\ref{remark:1.6}, the $(n_\ell-\ell)$-crossing containing 0 covers the interval $\{-100  h_{n_{(\ell-1)}},\ldots, 100  h_{n_{(\ell-1)}}\}$. In particular, it contains
$J_{(\ell-1)}$ hence it is $J_{(n_\ell-\ell)}$. As $J_n$ extends $J_{(n_\ell-\ell)}$, this proves that $J_n$ contains 0. 

Consider the set
\[
  R\egdef \Bigl\{r\ge1: J_n+rh_n\subset\{h_{\ngood(\ell)},\ldots,2h_{\ngood(\ell)}\} \Bigr\}
\]
Since $2h_{\ngood(\ell)}<h_{n_\ell}$, Lemma~\ref{lemma:shifting} applies to each $r\in R$. In particular, for each $r\in R$ and each $i=1,\ldots,d$
we have $T^{rh_n}x_i\in\overline{C}_n$. But by choice of $\ngood(\ell)$, we also know that $T^{rh_n}x_i\in C_{n_\ell}$. Since $C_{n_\ell}$ is disjoint
from the fake $n'$-tower, $T^{rh_n}x_i\notin \tilde C_n$, and finally $T^{rh_n}x_i\in C_n$. Let $\tilde J_n$ be the interval obtained by removing the first
$4^\ell$ elements of the $n$-crossing $J_n$. Then, by Lemma~\ref{lemma:shifting}, we have $0\in\tilde J_n$, and for each $r\in R$, $\tilde J_n+rh_n$ is
contained in an $n$-crossing. Note that the size of $\tilde J_n$ is $\ge (1-\eta/100)h_n-4^\ell>(1-\eta) h_n$.

By Remark~\ref{remark:fake-t_n}, for each $r\in R$ and each $i=1,\ldots,d$, we have $t_n(T^{rh_n}x_i)=t_n(x_i)+r\bmod 3$. In particular, as $J_n$ is
not synchronized, for each $r\in R$, $t_n(T^{rh_n}x_i)$ takes at least 2 values as $i$ varies. 
We define 
\[
  r_0:=\min \{r\in R: t_n(T^{rh_n}x_i)\text{ takes both values 1 and 2 as }i=1,\ldots,d\}.
\]
We have $\min R\le r_0\le \min R+2$.
%

Now let us consider $r$ such that both $r$ and $r+1$ are in $R$. 
For $j\in\tilde J_n$, we want to compare the position in tower~$n$ of $T^{j+rh_n}x_i$ and $T^{j+(r+1)h_n}x_i$ for each coordinate.
\begin{itemize}
  \item If $i$ is such that $t_n(T^{rh_n}x_i)=1$, the orbit of $x_i$ will not pass through a spacer between 
$\tilde J_n+rh_n$ and  $\tilde J_n+(r+1)h_n$. Hence in this case we have $j_n(T^{j+rh_n}x_i)-j_n(T^{j+(r+1)h_n}x_i)=0$.
  \item If $i$ is such that $t_n(T^{rh_n}x_i)=2$, the orbit of $x_i$ will  pass through one spacer between 
$\tilde J_n+rh_n$ and  $\tilde J_n+(r+1)h_n$. Hence in this case we have $j_n(T^{j+rh_n}x_i)-j_n(T^{j+(r+1)h_n}x_i)=1$.
  \item If $i$ is such that $t_n(T^{rh_n}x_i)=3$, we have $j_n(T^{j+rh_n}x_i)-j_n(T^{j+(r+1)h_n}x_i)\in\{0,1\}$, 
  depending on the position of $T^{rh_n}x_i$ in the subsequent towers.
\end{itemize}
More precisely,  in every case the value of $j_n(T^{j+rh_n}x_i)-j_n(T^{j+(r+1)h_n}x_i)$ is determined as follows:
let $m$ be the smallest integer, $m\ge 0$, such that $t_{n+m}(T^{rh_n}x_i)\neq 3$. Note that $n+m<n_\ell$ since  $\tilde J_n+rh_n$ and $\tilde J_n+(r+1)h_n$
are contained in the same $n_\ell$-crossing. Then we have
\begin{equation}
  \label{eq:how_to_decide}
  j_n(T^{j+rh_n}x_i)-j_n(T^{j+(r+1)h_n}x_i)=\begin{cases}
                                              0 & \text{ if }t_{n+m}(T^{rh_n}x_i)=0,\\
                                              1 & \text{ if }t_{n+m}(T^{rh_n}x_i)=1.
                                            \end{cases}
\end{equation}

The difficulty which arises here is that, when $t_n(x_i)=3$, the value of this difference may vary with $r$.
This is why we need the following lemma.

\begin{lemma}
  \label{lemma:sameS}
  There exists an integer $s$, $0\le s<3^{d-1}$, such that
  \begin{itemize}
    \item $s=0\bmod 3$,
    \item for each $i=1,\ldots,d$, there exists a smaller integer $m_i$, $0\le m_i\le d-2$, satisfying
    $t_{n+m_i}(T^{(r_0+s)h_n}x_i)\neq 3$.
  \end{itemize}

\end{lemma}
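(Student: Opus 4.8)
The plan is to show that shifting a coordinate by $h_n$ acts on its address in the successive towers as adding $1$ in base~$3$; the set of ``bad'' shifts for a single coordinate is then a single residue class modulo $3^{d-1}$, and a counting argument produces the desired $s$.

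First some preliminaries. Since $n_{(\ell-1)}<n_\ell-\ell<n$, none of the indices $n,n+1,\dots,n_\ell-1$ is of the form $n_{\ell'}$, so each tower $m+1$ with $n\le m<n_\ell$ is built from tower~$m$ by the ordinary Chacon rule. I would also record that $\{r_0,r_0+1,\dots,r_0+3^{d-1}\}\subseteq R$: using $n<n_\ell-k(\ell)$ and $p_1>d$ (Lemma~\ref{lemma:consecutive_n_intervals}), one has $\ngood(\ell)\ge n_\ell-k(\ell)+p_1+2p_2>n+p_1>n+d$, whence $h_{\ngood(\ell)}>3^{d}h_n$; since $R$ is an interval (it consists of those $r$ for which the fixed-shape set $J_n+rh_n$ lies in $\{h_{\ngood(\ell)},\dots,2h_{\ngood(\ell)}\}$) of length $>3^{d}-2>3^{d-1}+3$, and $r_0\le\min R+2$, the claim follows. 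By the choice of $\ngood(\ell)$ and by Lemma~\ref{lemma:shifting}, every point $T^{(r_0+s)h_n}x_i$ with $0\le s\le 3^{d-1}$ then lies in $C_{n_\ell}$, hence in $C_n$ (the fake $n'$-tower, $n'\egdef n_\ell-k(\ell)$, being disjoint from $C_{n_\ell}$).

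Now fix $y\in C_n$ with $T^{h_n}y\in C_{n_\ell}$. I claim $T^{h_n}$ acts on the address of $y$ as ``$+1$ in base~$3$'': relabelling columns $\{1,2,3\}$ as digits $\{0,1,2\}$ and reading $(t_n(y)-1,t_{n+1}(y)-1,\dots)$ little-endian, the address of $T^{h_n}y$ is obtained by adding $1$, with carries propagating up to the first index $m$ with $t_{n+m}(y)\neq 3$ (and never reaching $n_\ell$, since $T^{h_n}y\in C_{n_\ell}$). This is an unwinding of the construction: under $T^{h_n}$ the value $t_n$ cycles $1\to2\to3\to1$ (as in Remark~\ref{remark:fake-t_n}), a $3\to1$ wrap at level~$n$ amounts to passing to the next occurrence of tower~$n+1$, i.e.\ to incrementing $t_{n+1}$, and so on recursively, exactly as in the analyses already carried out for Lemma~\ref{lemma:consecutive_n_intervals} and Lemma~\ref{lemma:shifting}. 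Consequently, if $v_s(i)\in\{0,\dots,3^{d-1}-1\}$ denotes the base-$3$ value of $\bigl(t_n(T^{(r_0+s)h_n}x_i)-1,\dots,t_{n+d-2}(T^{(r_0+s)h_n}x_i)-1\bigr)$, then $v_{s+1}(i)\equiv v_s(i)+1\pmod{3^{d-1}}$, hence $v_s(i)\equiv v_0(i)+s\pmod{3^{d-1}}$ for $0\le s\le 3^{d-1}$.

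It remains to count. Say that $i$ is \emph{bad for $s$} if $t_{n+m}(T^{(r_0+s)h_n}x_i)=3$ for every $0\le m\le d-2$; this is equivalent to $v_s(i)=3^{d-1}-1$, i.e.\ to $s\equiv 3^{d-1}-1-v_0(i)\pmod{3^{d-1}}$, a single residue class modulo $3^{d-1}$. Put $G_3\egdef\{i: t_n(T^{r_0h_n}x_i)=3\}$; by the choice of $r_0$ the values $t_n(T^{r_0h_n}x_i)$, $i=1,\dots,d$, include both $1$ and $2$, so $|G_3|\le d-2$. For $i\notin G_3$ and $s\equiv0\pmod 3$ one has $t_n(T^{(r_0+s)h_n}x_i)=t_n(T^{r_0h_n}x_i)\neq3$, so such an $i$ is never bad. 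Among the $3^{d-2}$ candidates $s\in\{0,3,6,\dots,3^{d-1}-3\}$, pairwise incongruent modulo $3^{d-1}$, each $i\in G_3$ excludes at most one; since $|G_3|\le d-2<3^{d-2}$, some candidate $s$ survives. That $s$ satisfies $0\le s<3^{d-1}$ and $s\equiv0\pmod3$, and for every $i$ one may take $m_i=0$ if $i\notin G_3$, while if $i\in G_3$ the relation $v_s(i)\neq 3^{d-1}-1$ forces $t_{n+m_i}(T^{(r_0+s)h_n}x_i)\neq3$ for some $0\le m_i\le d-2$. The one point requiring care is the justification of the ``$T^{h_n}=$ add $1$ in base~$3$'' rule in the presence of the fake-tower apparatus, which is however routine and entirely parallel to the earlier arguments; everything else is elementary modular arithmetic.
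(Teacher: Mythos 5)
Your proof is correct, and the combinatorial core is genuinely different from the paper's. Both arguments rest on the same underlying periodicity fact — the paper's equation~\eqref{eq:t_m}, which you repackage as ``$T^{h_n}$ adds $1$ in base~$3$ to the address $(t_n-1,t_{n+1}-1,\dots)$''; your justification of that fact is no less detailed than the paper's own, and your preliminary check that $\{r_0,\dots,r_0+3^{d-1}\}\subset R$ is a point the paper leaves implicit. Where you diverge is in how $s$ is produced. The paper runs a greedy digit-fixing algorithm: it locates the smallest coordinate $i_1$ with $t_n(T^{r_0h_n}x_{i_1})=3$, chooses $s_1\in\{0,1,2\}$ to force $t_{n+1}(T^{(r_0+3s_1)h_n}x_{i_1})=1$ (adding $3s_1$ does not disturb the level-$n$ digits), then iterates one level up, getting $s=3s_1+\cdots+3^ks_k$ with $k\le d-2$ because the strictly increasing sequence $i_1<i_2<\cdots$ avoids the at least two coordinates whose $t_n$-value is $1$ or $2$. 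You instead observe that ``$i$ is bad for $s$'' means $v_s(i)=3^{d-1}-1$, hence is a single residue class modulo $3^{d-1}$, and you count: the $3^{d-2}$ multiples of $3$ in $[0,3^{d-1})$ are pairwise incongruent modulo $3^{d-1}$, the at most $d-2$ coordinates of $G_3$ each kill at most one of them, and $d-2<3^{d-2}$. Your route is arguably cleaner and more robust — it identifies exactly which shifts fail and why, avoids the bookkeeping about which coordinates the greedy algorithm may revisit, and makes the inequality $d-2<3^{d-2}$ the visibly sufficient margin — while the paper's construction is explicit and produces a concrete $s$ rather than a mere existence statement. Both yield the same bound $s\le 3^{d-1}-3<3^{d-1}$.
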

\begin{proof}
We first remark that for each $i=1,\ldots,d$ and each $m\ge0$, the map $r\in R\mapsto t_{n+m}(T^{rh_n}x_i)$ 
has a very regular behaviour. Indeed, it is constant on intervals of length $3^m$, and if both $r$ and $r+3^m$ are in
$R$, we have 
\begin{equation}
  \label{eq:t_m}
  t_{n+m}(T^{(r+3^m)h_n}x_i) = t_{n+m}(T^{rh_n}x_i) + 1 \bmod 3.
\end{equation}

If $\bigl\{i\in\{1,\ldots,d\}:t_n(T^{r_0h_n}x_i)=3\bigr\}=\emptyset$, we just have to set $s\egdef 0$ and we get the 
result with $m_i=0$ for each $i$. Otherwise, we consider 
\[
  i_1 \egdef \min \bigl\{i\in\{1,\ldots,d\}:t_n(T^{r_0h_n}x_i)=3 \bigr\}.
\]
Then we choose $s_1\in\{0,1,2\}$ such that $t_{n+1}(T^{(r_0+3s_1)h_n}x_{i_1})=1$, which is possible by~\eqref{eq:t_m}.
We note that replacing $r_0$ by $(r_0+3s_1)$ does not affect the values of the $t_n(T^{rh_n}x_i)$.
Now, if $\bigl\{i\in\{1,\ldots,d\}:t_{n+1}(T^{(r_0+3s_1)h_n}x_i)=3\bigr\}=\emptyset$, we have the result with $s=3s_1$.
Otherwise, we set
\[
  i_2 \egdef \min \bigl\{i\in\{1,\ldots,d\}:t_{n+1}(T^{(r_0+3s_1)h_n}x_i)=3 \bigr\}.
\]
(Note that $i_2>i_1$.)
Then we choose $s_2\in\{0,1,2\}$ such that 
\[
  t_{n+2}(T^{(r_0+3s_1+9s_2)h_n}x_{i_2})=1.
\]
Again, replacing $(r_0+3s_1)$ by $(r_0+3s_1+9s_2)$ does not affect the values of the $t_{n+m}(T^{rh_n}x_i)$, $m=0,1$.

We continue in this way until we have found $s_1,\ldots,s_k\in\{0,1,2\}$ such that, for each $i=1,\ldots,d$, there
exists $m$, $0\le m\le k$ such that 
\[
  t_{n+m}(T^{(r_0+3s_1+\cdots+3^ks_k)h_n}x_i)\neq3.
\]
Since the algorithm also produces an strictly increasing sequence $i_1<i_2<\cdots$ in $\{1,\ldots,d\}$, we are guaranteed
that it will stop in $k\le d$ steps. Moreover, since the sequence $i_1<\cdots<i_k$ contains no $i$ such that 
$t_n(T^{r_0h_n}x_i)\in\{1,2\}$, we have in fact $k\le d-2$. We then get the annouced result by setting 
$s\egdef 3s_1+\cdots+3^ks_k\le 3^{d-1}$.
\end{proof}

\begin{figure}[htp]
  \centering
  \includegraphics[width=12cm]{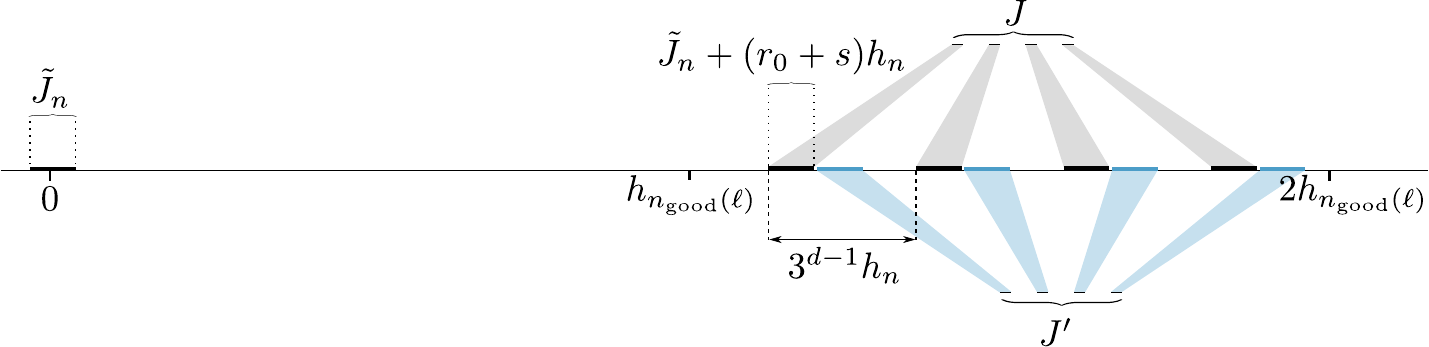}
  \caption{The choice of $J$ and $J'$ when $n_\ell-\ell < n<n_\ell-k(\ell)$}
\label{fig:cas_2}
\end{figure}

Now, with $s$ defined in Lemma~\ref{lemma:sameS}, we set 
\[
  R_1\egdef\{r\in R: (r+1)\in R\text{ and }r=r_0+s\bmod 3^{d-1}\}.
\]
Observe that $R_1\neq\emptyset$, as $R$ is an interval of size 
\[
     |R|\ge\lfloor h_{\ngood(\ell}/h_n\rfloor \ge 3^{\ngood(\ell)-n}\ge 3^{p_1} \ge 3^d.
\]
Recall that for each $0\le m\le d-2$, and each $i=1,\ldots,d$, the map $r\in R\mapsto t_{n+m}(T^{rh_n}x_i)$ is $3^{d-1}$-periodic.
Hence, by choice of $s$,  for each $i=1,\ldots,d$ the difference
\[
  j_n(T^{j+rh_n}x_i)-j_n(T^{j+(r+1)h_n}x_i)
\]
depends neither on $j\in\tilde J_n$ nor on $r\in R_1$. Moreover, by choice of $r_0$, this difference takes both values 0 and 1 as $i$ varies. 
Therefore we can construct the following partition $\{1,\ldots,d\}=G_0\sqcup G_1$, where for $a=0,1$, 
\[
  G_a\egdef \left\{i:  \forall j\in\tilde J_n,\forall r\in R_1, j_n(T^{j+rh_n}x_i)-j_n(T^{j+(r+1)h_n}x_i) = a\right\},
\]
Then we denote by $S_n$ the corresponding twisting transformation. We also consider the two disjoint subsets $J$ and $J'$ of  
$\{h_{\ngood(\ell)},\ldots,2h_{\ngood(\ell)}\}$ defined by
\[J\egdef\bigsqcup_{r\in R_1}\tilde J_n+rh_n,\quad\text{ and }J'\egdef J+h_n.\]
(See Figure~\ref{fig:cas_2})
Then, as in Example~\ref{Ex:twist}, the measures $\sigma_n\egdef \gamma_{J}$ and $\sigma'_n\egdef\gamma_{J'}$ 
satisfy~\eqref{eq:S_n-invariance} for each $m$-box 
$B\subset\ovC_m$, $m\le n$.

\smallskip

Assuming the existence of infinitely many integers $n$ with these properties, we can apply Proposition~\ref{prop:convergence_empirical_measures_pieces}
to prove that $\sigma_n\to\sigma$ and $\sigma'_n\to\sigma$. Indeed, $J$ and $J'$ are both contained in $\{h_{\ngood(\ell)},\ldots,2h_{\ngood(\ell)}\}$ 
which is contained in an $n_\ell$-crossing. They both have the structure required in the assumptions of this proposition, with  $M=3^{d-1}$. 
Moreover, we also know by Remark~\ref{rem:6.2} that  
$\gamma_{\{h_{\ngood(\ell)},\ldots,2h_{\ngood(\ell)}\}}\tend{n}{\infty}\sigma$.

Then Proposition~\ref{prop:twist} shows that $\sigma$ can be decomposed as a product of two Radon measures to which we can apply the induction hypothesis.

\smallskip

We are now reduced to study the case where, for each $\ell$ large enough and each $n_{\ell-1}\le n < n_\ell-k(\ell)$, all substantial $n$-crossings are synchronized, but still
there exist infinitely many integers $n$ for which at least one substantial $n$-crossing is not synchronized.

\subsection{The case $n = n_\ell-k(\ell)$}
\label{sec:thirdcase}

We consider now an integer $n$ of the form $n=n_\ell-k(\ell)$ for some $\ell$, where $\ell$ is large enough. We assume that there is at least one 
substantial $n$-crossing which is not synchronized, and also that for each $n_{(\ell-1)}\le m\le n-1$, all substantial $m$-crossings
are synchronized. Then, as in Section~\ref{sec:moredifficult}, we prove that the $n$-crossing $J_n$ containing 0 is of size $|J_n|\ge (1-\eta/100)h_n$, and is not synchronized. We also define $\tilde J_n\subset J_n$ as in the previous section: $\tilde J_n$ contains 0 and $|\tilde J_n|\ge (1-\eta)h_n$.

We still work with the fake tower~$n$, as introduced before Lemma~\ref{lemma:shifting} which is still valid in this case.
The new difficulty here is that we cannot anymore extend $t_n$ to $\overline{C}_n$. 

We consider integers $r$ with $0\le r\le 10d$, and we assume that $\ell$ is large enough so that $k(\ell)>10d$, thus
$10d< h_{n_\ell}/h_n$ and the results of Lemma~\ref{lemma:shifting} are valid for these integers $r$. In particular, for each such $r$, 
either $\tilde J_n+rh_n$ is contained in an $n$-crossing (we then say that $r$ corresponds to a \emph{true $n$-crossing}), 
or there is one coordinate $x_i$ such that $T^{rh_n}x_i$ is in the fake tower~$n$ $\tilde C_n$ (in this case we say that  
$r$ corresponds to a \emph{fake $n$-crossing}). Observe that for each $i=1,\ldots,d$, there is at most one integer $r$, $0\le r < \lfloor h_{n_\ell}/h_n\rfloor$, such that 
$T^{rh_n}x_i$ is in the fake tower~$n$. Indeed, as everything takes place in a single $C_{n_{(\ell+1)}}$-crossing,  when the orbit 
of $x_i$ leaves $C_{n_\ell}$, it comes back to $C_{n_\ell}$ after at most $h_n+1$ units of time, and then stays in 
$C_{n_\ell}$ for $h_{n_\ell}$ units of time.
  
If both $T^{rh_n}x_i$ and $T^{(r+1)h_n}x_i$ are in $C_n$,  then $t_n(T^{(r+1)h_n}x_i)=t_n(T^{rh_n}x_i)+1 \bmod 3$.
If $T^{rh_n}x_i$ is in the fake tower~$n$, then $T^{(r-1)h_n}x_i$ and $T^{(r+1)h_n}x_i$ are in $C_n$, and we have
$t_n(T^{(r-1)h_n}x_i)=3$, and $t_n(T^{(r+1)h_n}x_i)=1$.

With these facts in mind, we can prove the following lemma.

\begin{lemma}
  \label{lemma:thirdcase}
  There exist two consecutive integers, $-2\le r<r+1\le 10d$, such that 
  \begin{itemize}
    \item $\tilde J_n+rh_n$ is contained in an $n$-crossing,
    \item $\tilde J_n+(r+1)h_n$ is contained in an $n$-crossing,
    \item $\bigl\{i\in\{1,\ldots,d\}:t_n(T^{rh_n}x_i)=1\bigr\}\neq\emptyset,$
    \item $\bigl\{i\in\{1,\ldots,d\}:t_n(T^{rh_n}x_i)=2\bigr\}\neq\emptyset.$    
  \end{itemize}
\end{lemma}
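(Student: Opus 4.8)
The plan is to study the ``clocks'' $r\mapsto t_n(T^{rh_n}x_i)$, $i=1,\dots,d$, over the window $r\in\{-2,\dots,10d\}$, using the facts recorded just before the statement: for every $r$ in this window, either $\tilde J_n+rh_n$ is contained in an $n$-crossing (call such an $r$ \emph{true}) or some coordinate satisfies $T^{rh_n}x_i\in\tilde C_n$ (call $r$ \emph{fake}); each coordinate lies in $\tilde C_n$ for at most one value of $r$, so the window, which has $10d+3$ integers, contains at most $d$ fake values; for a true $r$ one has $t_n(T^{(r+1)h_n}x_i)\equiv t_n(T^{rh_n}x_i)+1\pmod 3$; and a coordinate passes into $\tilde C_n$ only from the value $t_n=3$, being at value $3$ just before its (unique) fake step and at value $1$ just after. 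The first consequence is that, along any maximal block $B$ of consecutive true values of $r$, the multiset $V(r):=\{t_n(T^{rh_n}x_i):i=1,\dots,d\}$ is merely translated by $+1$ in $\ZZ/3\ZZ$ as $r$ runs through $B$.

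Next I would isolate the elementary cyclic fact: in $\ZZ/3\ZZ$ any two distinct residues are adjacent, so if $V(r)$ takes at least two values at some true $r$, then among the three translates $V(r),V(r+1),V(r+2)$ at least one contains both $1$ and $2$. Hence if $B$ is a block of at least four consecutive true steps on which $V$ is non-constant, there is $r\in B$ with $r+1\in B$ satisfying all four bulleted conditions (the last two just say $\{1,2\}\subseteq V(r)$). Such non-constancy is available: $r=0$ is true since $0\in\tilde J_n\subset J_n$, and $V(0)=\{t_n(x_i):i\}$ is non-constant because $J_n$ is not synchronized; non-constancy of $V$ then holds at every step of the block containing $0$.

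It remains — and this is the only delicate point — to locate a usable block. The at most $d$ fake steps split $\{-2,\dots,10d\}$ into at most $d+1$ blocks of true steps of total length at least $9d+3$, so some block has length at least $7\ge 4$; if it contained $0$ we would be done. In general I would start from the block $[a_0,b_0]$ containing $0$: if $b_0-a_0\ge 3$ we are done by the cyclic argument; otherwise this block is short, hence flanked within a bounded distance of $0$ by fake steps, and I would follow the orbit past such a flanking fake step. Here the structural input is decisive: a coordinate enters $\tilde C_n$ only when its value is $3$, so the coordinates realizing the non-constancy of $V$ on $[a_0,b_0]$ that are at a value $\ne 3$ are never among those that reset across the adjacent fake step, and consequently at least two distinct values of $t_n$ (indeed a $\{1,2\}$-witness, after applying the appropriate cyclic translate) reappear on the following block. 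Iterating, and using that there are at most $d$ fake steps while the window has $10d+3$ integers, after boundedly many cascades one reaches a block of at least four consecutive true steps on which $V$ is non-constant, and one concludes by the cyclic argument. I expect the bookkeeping over the possible placements of the fake steps relative to $0$ — and, when $[a_0,b_0]$ has length $\le 2$, checking which of the three cyclic translates of $V$ falls inside it (possibly using the leftward room down to $r=-2$, e.g.\ the fact that $1\notin V(0)$ forces step $-1$ to be true) — to be the main obstacle; the cyclic-shift structure takes care of everything else, and the generous bound $10d$ leaves ample room for all these configurations.
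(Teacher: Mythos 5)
Your setup is right and matches the paper's: the clocks $r\mapsto t_n(T^{rh_n}x_i)$ advance by $+1\bmod 3$ across true steps, each coordinate is fake for at most one $r$ in the window, a coordinate enters $\tilde C_n$ only from the value $3$ and returns at the value $1$, and the cyclic pigeonhole (one of three consecutive shifts of a non-constant $V$ contains $\{1,2\}$) is exactly how the paper concludes in its easy cases. But the step you yourself flag as delicate is where the argument breaks. The cascade claim --- that after crossing a fake step ``at least two distinct values of $t_n$ reappear on the following block'' --- is false. Take $V(r)=\{2,3\}$ at the last true step $r$ of a block, with every coordinate at value $3$ being outgoing at the fake step $r+1$: the outgoing coordinates return at value $1$, the others advance by $2$ from $2$ to $1$, and $V(r+2)=\{1\}$ is constant. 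So non-constancy does not propagate forward, and your plan of iterating until you hit a long non-constant block never gets off the ground in precisely the configurations where the long block is synchronized. That synchronized-long-block situation is the genuinely hard case, and your sketch has no mechanism for it.

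The paper's device is different and handles exactly this: let $r_0\ge 0$ be the \emph{smallest} integer such that $r_0,\dots,r_0+3$ are all true. If the $n$-crossing at $r_0h_n$ is not synchronized, the cyclic shift argument inside that run of four finishes the proof (your argument, essentially). If it \emph{is} synchronized, minimality forces $r_0-1$ to be fake, so the returning coordinates are at value $1$ at step $r_0$ and synchronization forces \emph{all} coordinates to value $1$ there; moreover not all coordinates can be outgoing at $r_0-1$ (else $J_n$ itself would be synchronized). Stepping back to $r=r_0-3$ then separates the two classes: non-outgoing coordinates sit at value $1$ and outgoing ones at value $2$, and one checks that $r_0-3$ and $r_0-2$ are necessarily true. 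Note that this resolution works \emph{backwards} from the fake step and exploits the reset asymmetry quantitatively (outgoing coordinates lag by one), rather than propagating non-constancy forwards. Your proposal would need this (or an equivalent) case to be added and the forward cascade to be abandoned or repaired; as written there is a genuine gap.
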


\begin{proof}
There is at most $d$ integers $r$, $0\le r\le 10d$, such that $\tilde J_n+rh_n$ corresponds to a fake $n$-crossing (indeed, each coordinate can be 
responsible for only one $r$ for which this property fails). Hence
there is a smaller integer $r_0$, $0\le r_0\le 10d-2$, such that $r_0$, $(r_0+1)$, $(r_0+2)$ and $(r_0+3)$ correspond to true 
$n$-crossings.

If $r_0=0$, since $J_n$ is not synchronized, there are two coordinates $x_{i_1}$ and $x_{i_2}$ such that $t_n(x_{i_1})\neq t_n(x_{i_2})$. 
If $\{t_n(x_{i_1}),t_n(x_{i_2})\}=\{1,2\}$, we just have to take $r=0$. If $\{t_n(x_{i_1}),t_n(x_{i_2})\}=\{1,3\}$, we set $r=1$, 
and if $\{t_n(x_{i_1}),t_n(x_{i_2})\}=\{2,3\}$, we set $r=2$. In all these cases we get 
\[
  \{t_n(T^{rh_n}x_{i_1}),t_n(T^{rh_n}x_{i_2})\}=\{1,2\}.
\]

If $r_0>0$ and the $n$-crossing containing $r_0h_n$ is not synchronized, then we can proceed as in the previous case, replacing 0 by $r_0$.

If $r_0>0$ and the $n$-crossing containing $r_0h_n$ is synchronized, then by definition of $r_0$, $(r_0-1)$ corresponds to a 
fake $n$-crossing, hence there is at least one coordinate $x_{i_1}$ such that $t_n(T^{r_0h_n}x_{i_1})=1$. Since the corresponding
$n$-crossing is assumed to be synchronized, we have $t_n(T^{r_0h_n}x_{i})=1$ for each $i=1,\ldots,d$. We also observe
that there exists at least one coordinate $x_{i_2}$ such that $T^{(r_0-1)h_n}x_{i_2}\in C_n$. Indeed, otherwise all coordinates
would be in the fake $n$-tower at the same time, and this would imply that the $n$-crossing $J_n$ containing 0 is synchronized.
Now we take $r\egdef r_0-3$. Then for each coordinate $x_i$ such that $T^{(r_0-1)h_n}x_{i}\in C_n$, we have
$t_n(T^{rh_n}x_{i})=1$, and for each coordinate $x_i$ such that $T^{(r_0-1)h_n}x_{i}\notin C_n$, we have
$t_n(T^{rh_n}x_{i})=2$.
\end{proof}

\begin{figure}[htp]
  \centering
  \includegraphics[width=8cm]{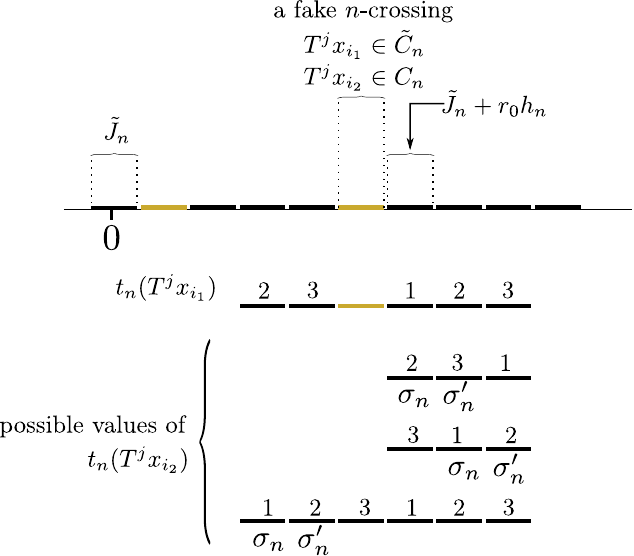}
  \caption{The three possible cases for the choice of the measures $\sigma_n$ and $\sigma'_n$ when $n=n_\ell-k(\ell)$. Here the orbit of the coordinate $x_{i_1}$ is in the fake Rokhlin tower $\tilde C_n$ on the interval $\tilde J_n+(r_0-1)h_n$, whereas on the same interval the orbit  of the coordinate $x_{i_2}$ is in $C_n$.}
\label{fig:cas_3}
\end{figure}

Now, with $r$ provided by Lemma~\ref{lemma:thirdcase}, we consider the two measures 
$\sigma_n\egdef\gamma_{\tilde J_n+rh_n}$ and $\sigma'_n\egdef\gamma_{\tilde J_n+(r+1)h_n}$ (see Figure~\ref{fig:cas_3}).
We can show by the same argument as in Section~\ref{sec:easycase} that there exists a twisting transformation 
$S_n$ such that \eqref{eq:S_n-invariance} holds whenever $B$ is an $m$-box in $\ovC_m$ for some $m\le n$. 

Finally, if we have infinitely many integers $n$ to which the above arguments apply, Proposition~\ref{prop:convergence_empirical_measures_interval}
shows that $\sigma_n\tend{n}{\infty}\sigma$ and $\sigma'_n\tend{n}{\infty}\sigma$. Then Proposition~\ref{prop:twist} shows that $\sigma$ 
can be decomposed as a product of two Radon measures to which we can apply the induction hypothesis.

\subsection{The case $n_\ell-k(\ell) < n<n_\ell$}

It only remains now to study the case where, for each $\ell$ large enough and each $n_{\ell-1}\le n \le n_\ell-k(\ell)$, all substantial 
$n$-crossings are synchronized, but still there exist infinitely many integers $n$ for which at least one substantial $n$-crossing is not synchronized. 

We consider now an integer $n$ with $n_\ell-k(\ell)<n<n_\ell$ for some large $\ell$,  
and we assume that there is at least one substantial $n$-crossing which is not synchronized.
We can also assume that for each $n_{(\ell-1)}\le m\le n-1$, all substantial $m$-crossings
are synchronized. Then, as in Section~\ref{sec:moredifficult}, we construct a family $(J_m)$ of intervals, $n_{\ell-1}\le m \le n$, 
where $J_m$ is the $m$-crossing containing 0, and is of size $|J_m|\ge h_m-(d+2)\eta h_{n_{(\ell-1)}}$.
We set $n'\egdef n_\ell-k(\ell)$. We have $|J_{n'}|\ge (1-\eta/100)h_{n'}$, provided $\ell$ is large enough.

As in Section~\ref{sec:thirdcase}, we apply Lemma~\ref{lemma:shifting} for $n'$. We consider all integers $r\ge 0$ such that 
$rh_{n'}\le 4 h_n$: each such integer $r$ corresponds either to a true $n'$-crossing (if for each $i=1,\ldots,d$, $T^{rh_{n'}}x_i\in C_{n'}$), or to a fake $n'$-crossing (if there exists $i$ such that $T^{rh_{n'}}x_i\in \tilde C_{n'}$). If $T^{rh_{n'}}x_i\in C_{n'}$, then we can consider $t_{n'}(T^{rh_{n'}}x_i)$ which evolves according to the rules stated in Section~\ref{sec:thirdcase}. We can even precise a little bit more these rules by considering also the position of $T^jx_i$ relatively to tower~$n$:
If $T^{rh_{n'}}x_i$ is in the fake tower~${n'}$, then $T^{(r-1)h_{n'}}x_i$ and $T^{(r+1)h_{n'}}x_i$ are in $C_{n'}\subset C_n$, and we have
$t_{n'}(T^{(r-1)h_{n'}}x_i)=t_n(T^{(r-1)h_{n'}}x_i)=3$, and $t_{n'}(T^{(r+1)h_{n'}}x_i)={t_n(T^{(r+1)h_{n'}}x_i)=1}$.

Let us first consider the case where each $0\le r\le \lfloor 4h_n/h_{n'}\rfloor$ corresponds to a true $n'$-crossing. 
Then the interval $\{0,\ldots,4h_n\}$ is contained in a single $n_\ell$-crossing. 
We denote by $\tilde J_n$ the interval obtained by removing from $J_n$ its first 3 points. Then, as in the proof of 
Lemma~\ref{lemma:shifting}, we prove that $0\in \tilde J_n$, and that the intervals 
$\tilde J_n$, $\tilde J_n+h_n$, $\tilde J_n+2h_n$, $\tilde J_n+3h_n$ are each contained in some $n$-crossing. 
Since $J_n$ is not synchronized, we show by similar arguments as in the proof
of Lemma~\ref{lemma:thirdcase} that for some $s\in\{0,1,2\}$, 
\[
  \{1,2\}\subset\bigl\{ t_n(T^{sh_nx_i}) : i=1,\ldots,d  \bigr\}.
\]
Then we construct the measures $\sigma_n\egdef\gamma_{\tilde J_n+sh_n}$ and $\sigma'_n \egdef\gamma_{\tilde J_n+(s+1)h_n}$: 
by similar arguments as before, we construct a twisting transformation 
$S_n$ such that \eqref{eq:S_n-invariance} holds whenever $B$ is an $m$-box in $\ovC_m$ for some $m\le n$. 
If this can be done for infinitely many integers $n$, then Proposition~\ref{prop:convergence_empirical_measures_interval}
shows that $\sigma_n\tend{n}{\infty}\sigma$ and $\sigma'_n\tend{n}{\infty}\sigma$. Then Proposition~\ref{prop:twist} shows that $\sigma$ 
can be decomposed as a product of two Radon measures to which we can apply the induction hypothesis.

Now we consider the case where there exists some $r$, $1\le r\le \lfloor 4h_n/h_{n'}\rfloor$, which corresponds to a fake $n'$-crossing. 
This case is illustrated on Figure~\ref{fig:cas_4}. We define
$r_0$ as the smallest integer with this property. Then we know that there exists $i\in\{1,\ldots,d\}$ such that $T^{r_0h_{n'}}x_i\in \tilde C_{n'}$. 
Each such $i$ is called an \emph{outgoing} coordinate. Note that for each outgoing coordinate $i$, we have for each $n'\le m\le n$ $t_m(T^{(r_0-1)h_{n'}}x_i)=3$
(indeed, the orbit of the outgoing coordinate has to reach the top of tower $m$ before leaving $C_{n_\ell}$).

\begin{figure}[htp]
  \centering
  \includegraphics[width=11cm]{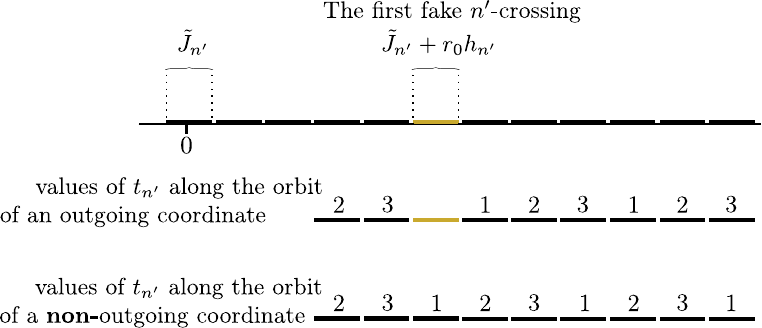}
  \caption{The behaviour of $t_{n'}$ along the orbits of outgoing and non-outgoing coordinates}
\label{fig:cas_4}
\end{figure}

We also observe that, since the interval $\{0,\ldots,(r_0-1)h_{n'}\}$ is contained in an $n_\ell$-crossing, we have for each $i_1,i_2\in\{1,\ldots,d\}$,
each $0\le r\le r_0-1$ and each $n'\le m\le n$
\begin{equation}
  \label{eq:diftm}
  t_m(T^{rh_{n'}}x_{i_1})-t_m(T^{rh_{n'}}x_{i_2})=t_m(x_{i_1})-t_m(x_{i_2}).
\end{equation}
For $n'\le m<n$, the above difference vanishes. Hence, we have $t_m(T^{(r_0-1)h_{n'}}x_{i_1})=t_m(T^{(r_0-1)h_{n'}}x_{i_2})$ for each 
$i_1,i_2$. Taking into account the outgoing coordinates, we see that for each $i=1,\ldots,d$, $t_m(T^{(r_0-1)h_{n'}}x_i)=3$. This proves that
at time $(r_0-1)h_{n'}$, each coordinate is in the last occurrence of tower $n'$ inside tower $n$. 

Now,  since the $n$-crossing $J_n$ containing 0 is not synchronized,  there exist 
$i_1,i_2$ such that the difference in~\eqref{eq:diftm} does not vanish for $m=n$, and this implies that
there exist some $i\in\{1,\ldots,d\}$ such that $t_n(T^{(r_0-1)h_{n'}}x_i)\neq3$. In particular such an $i$ is \emph{not} an outgoing coordinate. 
At time $r_0h_{n'}$, the orbit of a non-outgoing coordinate is in the first occurrence of tower $n'$ inside tower~$n$. When $r$ runs over the set 
$R\egdef\{r_0,\ldots,r_0+3^{n-n'}-1\}$,
we get that for each non-outgoing coordinate $i$,  $T^{rh_{n'}}x_i$ successively belongs to successive occurrences of tower $n'$ inside tower~$n$,
and we have $t_{n'}(T^{rh_{n'}}x_i)=r-r_0+1\bmod 3$.

On the other hand, if $i$ is an outgoing coordinate, the orbit of $x_i$ falls into the first occurrence of tower $n'$ inside tower~$n$ only at time $(r_0+1)h_n$.
And we have, for $r\in R\setminus \{r_0\}$, $t_{n'}(T^{rh_{n'}}x_i)=r-r_0\bmod 3$.

Set $R_1\egdef \{r\in R:r-r_0=1\bmod 3\}$. Then $R_1\neq\emptyset$ because $n>n'$, and for $r\in R_1$, we have 
\[
  t_{n'}(T^{rh_{n'}}x_i) = \begin{cases}
                             1 & \text{ if $i$ is an outgoing coordinate,}\\
                             2 & \text{ otherwise.}
                           \end{cases}
\]
Let $\tilde J_{n'}$ be the interval obtained after removing the first $4^\ell$ points from $J_{n'}$, and set $J\egdef \bigsqcup_{r\in R_1}\tilde J_{n'}+rh_{n'}$,
$J'\egdef J+h_{n'}$, and let $I$ be the smallest interval containing $J$ and $J'$.  Then $J$ and $J'$  have inside $I$ the structure required in  Proposition~\ref{prop:convergence_empirical_measures_pieces}, with $M=3$. We consider the two measures $\sigma_n\egdef\gamma_{J}$ and $\sigma'_n\egdef\gamma_{J'}$. 
Then there exists a twisting transformation $S_n$, defined from the partition of $\{1,\ldots,d\}$ into outgoing and non-outgoing coordinates, such that 
\eqref{eq:S_n-invariance} holds whenever $B$ is an $m$-box in $\ovC_m$ for some $m\le n'$. 

If we have infinitely many integers $n$ for which the above construction is possible, then Proposition~\ref{prop:convergence_empirical_measures_interval} ensures that $\gamma_I\tend{n}{\infty}\sigma$, then Proposition~\ref{prop:convergence_empirical_measures_pieces} yields
$\sigma_n\tend{n}{\infty}\sigma$ and $\sigma'_n\tend{n}{\infty}\sigma$. Finally, Proposition~\ref{prop:twist} shows that $\sigma$ 
can be decomposed as a product of two Radon measures to which we can apply the induction hypothesis.

\smallskip

This concludes the proof of Theorem~\ref{thm:msj}.

\section{Rational ergodicity and measurable law of large numbers}

\label{sec:rational_ergodicity}

\begin{definition}
  A \emph{measurable law of large numbers} for a conservative, ergodic, measure preserving dynamical system $(X,\A,\mu,T)$ is a measurable 
  function $L:\{0,1\}^\NN\to[0,\infty]$ such that
  for all $B\in\A$, for $\mu$-almost every $x\in X$,
  \[ 
    L\left(\ind{B}(x),\ind{B}(Tx),\ldots\right) = \mu(B).
  \]
\end{definition}

\begin{definition}
  A  conservative, ergodic, measure preserving dynamical system $(X,\A,\mu,T)$ is \emph{rationally ergodic} if there exists 
  a set $B\in\A$, $0<\mu(B)<\infty$, and a constant $M>0$ such that, for any $r\ge1$, 
  \[
    \int_B \left(\sum_{0\le j\le r-1}\ind{B}(T^jx)\right)^2 \, d\mu(x) \le M \left(\int_B \sum_{0\le j\le r-1}\ind{B}(T^jx) \, d\mu(x)\right)^2.
  \]
\end{definition}

According to Theorem~3.3.1 in~\cite{Aaronson}, a measurable law of large numbers exists for $T$ as soon as $T$ is rationally ergodic.

The purpose of this section is to prove the following result.

\begin{prop}
\label{prop:rational_ergodicity}  
The nearly finite Chacon transformation is rationally ergodic, hence admits a measurable law of large numbers.
\end{prop}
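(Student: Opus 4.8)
The plan is to verify the rational ergodicity inequality directly on the base $B\egdef C_0$, the bottom level of the first Rokhlin tower (so $\mu(B)=1$), using the self-similar structure of the cutting-and-stacking construction. Write $a_r(x)\egdef\sum_{0\le j\le r-1}\ind{B}(T^jx)$ and $A_r\egdef\int_B a_r\,d\mu$. The standard strategy (as in Aaronson's treatment of rank-one maps) is to show first that $A_r$ is comparable to the ``expected'' number of returns, and then to bound the second moment $\int_B a_r^2\,d\mu$ by $O(A_r^2)$ by exploiting the fact that, for $r$ of the order of a tower height $h_n$, the orbit segment of length $r$ starting in $B$ visits the levels of tower~$n$ in a way that is, up to bounded distortion, independent of where in $B$ the point started.

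First I would reduce to the case $r=h_n$ for the successive tower heights $h_n$: since $h_{n+1}\le 7h_n$, a general $r$ lies between two consecutive heights, and both $a_r$ and $A_r$ are monotone in $r$ and change by a bounded factor when $r$ is multiplied by a bounded constant, so it suffices to prove the inequality along the subsequence $r=h_n$ with a uniform constant $M$. Next, I would compute $A_{h_n}=\int_{C_0}a_{h_n}\,d\mu$: a point $x\in C_0$ and the orbit $x,Tx,\ldots,T^{h_n-1}x$ pass through tower~$n$, and the number of levels of tower~$n$ contained in $C_0$ is exactly the number of occurrences of tower~$0$ inside tower~$n$ reachable within $h_n$ steps; a direct count using the recursions $h_{n+1}=3h_n+1$ (ordinary step) and $h_{n+1}=3h_n+3h_{n-k(\ell)}+1$ (special step $n=n_\ell$) shows that $A_{h_n}$ grows like a fixed constant times $3^{-n}\cdot(\text{number of level-}C_0\text{ sublevels of tower }n)$, and in particular $A_{h_n}=\mu(C_0\cap\{j<h_n:\ldots\})$ is of order $3^{-n}\,\#\{\text{occurrences of tower }0\text{ in tower }n\}$, which is $\asymp 1$ by the same argument that made $\mu$ infinite only through the spacers. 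For the second moment, the key observation is that $a_{h_n}(x)$, as a function of $x\in C_0$, is essentially constant: two points $x,x'$ in the same level of tower~$n$ that both belong to $C_0$ have $a_{h_n}(x)=a_{h_n}(x')$ exactly, and across different such levels the value varies only because of the spacers inserted at the special steps $n_\ell\le n$; since each such step inserts a bounded number ($h_{n-k(\ell)}+1$ per subcolumn) of extra rows relative to $3h_n$, the ratio of the maximum to the minimum of $a_{h_n}$ over the relevant levels of $C_0$ stays bounded by a constant depending only on the growth conditions~\eqref{eq:condition_lk} and~\eqref{eq:condition_nl}. This gives $\int_{C_0}a_{h_n}^2\,d\mu\le M\bigl(\int_{C_0}a_{h_n}\,d\mu\bigr)^2$ for a uniform $M$, which is the rational ergodicity inequality; the existence of a measurable law of large numbers then follows from Theorem~3.3.1 in~\cite{Aaronson}.

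The main obstacle I expect is the control of the distortion of $a_{h_n}$ caused by the spacers at the special steps: one must show that, even though infinitely many ``bad'' steps $n=n_\ell$ occur and each contributes extra height $\approx 3h_{n_\ell-k(\ell)}$, the cumulative effect on the ratio $\max a_{h_n}/\min a_{h_n}$ (equivalently, on $\sup_r A_r^2/\bigl(A_{r}\cdot\text{something}\bigr)$) remains bounded uniformly in $n$. This requires summing the relative contributions $h_{n_\ell-k(\ell)}/h_{n_\ell}\le 2\cdot3^{-k(\ell)}$ over $\ell$ and invoking~\eqref{eq:condition_lk} (which forces $k(\ell)\to\infty$ fast enough for $\sum_\ell 3^{-k(\ell)}<\infty$, since $\ell_{k+1}-\ell_k$ can be taken large but the number of $\ell$ with a given $k(\ell)$ is exactly $\ell_{k+1}-\ell_k$ — so one actually needs the stronger bookkeeping that $\sum_\ell h_{n_\ell-k(\ell)}/h_{n_\ell}$ converges, which does hold because $h_{n_\ell-k(\ell)}/h_{n_\ell}\le 3^{-k(\ell)}$ and, for fixed $k$, there are $\ell_{k+1}-\ell_k$ relevant $\ell$ but each $n_\ell$ is separated so the heights $h_{n_\ell}$ grow geometrically in $\ell$, making the sum over $\ell$ with $k(\ell)=k$ a convergent geometric-type series times $3^{-k}$). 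Making this estimate clean and uniform — and checking that it genuinely bounds the second moment rather than just the pointwise oscillation — is where the real work lies; the rest is bounded-distortion bookkeeping entirely analogous to the classical Chacon computation.
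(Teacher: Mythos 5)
Your overall strategy (take $B=L_0^0$, bound the second moment of $a_r=\sum_{j<r}\ind{B}\circ T^j$ by a uniform control of $a_r$ on $B$) is the right one and is the paper's choice of $B$ as well, but the step on which everything rests — the bounded‑distortion claim — is where the argument breaks. You justify the boundedness of the oscillation of $a_{h_n}$ on $B$ by asserting that $\sum_{\ell} h_{n_\ell-k(\ell)}/h_{n_\ell}$ converges. It does not: since $h_{n-k}/h_n\ge 7^{-k}$, the sum over the $\ell_{k+1}-\ell_k$ indices $\ell$ with $k(\ell)=k$ is at least $(\ell_{k+1}-\ell_k)7^{-k}\ge\log 2$ by \eqref{eq:condition_lk}, so the series diverges — and it \emph{must} diverge, because $\prod_\ell\bigl(1+h_{n_\ell-k(\ell)}/h_{n_\ell}\bigr)$ is comparable to $\mu(C_n)/\mu(C_0)\to\infty$; convergence of your series would make $\mu$ finite. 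Your heuristic that ``$h_{n_\ell}$ grows geometrically in $\ell$'' does not help, since numerator and denominator grow together. Moreover the conclusion you aim for is itself false: a point of $B$ sitting near the top of tower $n_\ell$, about to enter the spacer block of length $h_{n_\ell-k(\ell)}$ (which, by \eqref{eq:condition_nl}, exceeds $h_{n_{(\ell-1)}}$ and hence exceeds $r$ for $r\asymp h_{n_{(\ell-1)}}$), has $a_r$ of order $1$ while the typical value tends to infinity, so $\max_B a_r/\min_B a_r$ is unbounded. (Two smaller slips: two points of $B$ in the same level of tower $n$ agree on $a_{h_n}$ only until they exit the top of that tower; and $A_{h_n}=\int_B a_{h_n}\,d\mu\asymp 3^n\to\infty$, not $\asymp1$.)

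The inequality survives because one does not need a two‑sided pointwise bound: a uniform \emph{upper} bound on all of $B$ plus a matching \emph{lower} bound on a fixed positive fraction of $B$ suffices, and these can be obtained by a single‑scale count rather than by summing spacer contributions over all scales. This is what the paper does: for $\tfrac13 h_{n_{(\ell-1)}}\le r<\tfrac13 h_{n_\ell}$ it sets $n'=n_{(\ell-1)}-k(\ell-1)$ and counts complete climbs of tower $n'$, each of which contributes exactly $s=3^{n'}$ visits to $B$ whatever the internal spacer structure; this gives $a_r\le 2rs/h_{n'}$ everywhere on $B$, and $a_r\ge rs/(4h_{n'})$ on the two thirds of $B$ lying in the first two occurrences of tower $n_\ell-1$ inside tower $n_\ell$ (whose length‑$r$ orbit never leaves $C_{n_\ell}$, so the gaps between climbs are $\le h_{n'}+1$ and the fake climbs have frequency $3^{-k(\ell-1)}$). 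To repair your write‑up you would need to replace the multi‑scale distortion summation by such a single‑scale argument and replace ``$\min_B a_r$'' by ``$a_r$ on a subset of $B$ of measure $\ge\tfrac23\mu(B)$''.
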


\begin{proof}
  Define $B$ as the unique level $L_0^0$ of tower~0.
  Let $r\ge h_{n_1}$ be a large integer (to be precised), and let $\ell\ge2$ be determined by the inequalities
  \[
    \frac{1}{3} h_{n_{(\ell-1)}} \le r < \frac{1}{3} h_{n_{\ell}}.
  \]
  Set $n'\egdef n_{(\ell-1)}-k(\ell-1)$. 
   We assume that $r$ is large enough so 
  that $\ell$ satisfies
  \[
    \dfrac{h_{n'}}{h_{n_{(\ell-1)}}} \le \frac{1}{3^{k(\ell-1)}} < \frac{1}{300},
   \]
  which ensures that $h_{n'}<r/100$.
  We denote by $s$ the number of levels of tower $n'$ which are contained in $B$.
  
  Let $x\in B$.  Since $B\subset C_{n'}$, 
  we have
  \[
    \sum_{0\le j\le r-1}\ind{B}(T^jx) \le \left(\frac{r}{h_{n'}}+2\right)s \le 2\frac{rs}{h_{n'}}.
  \]
  Since this is true for each $x\in B$, we get
  \begin{equation}
    \label{eq:ineqsup}
     \int_B \left(\sum_{0\le j\le r-1}\ind{B}(T^jx)\right)^2 \, d\mu(x) \le \left(2\frac{rs}{h_{n'}}\right)^2\mu(B).
  \end{equation}

 Now let us further assume that $x,Tx,\ldots,T^{r-1}x$ are all in $C_{n_\ell}$, and let us find a lower bound for the sum $\sum_{0\le j\le r-1}\ind{B}(T^jx)$. 
  Inside $\{0,\ldots,r-1\}$, we consider the disjoint subintervals of length $h_{n'}$ along which $x$ climbs into tower $n'$ or into the fake tower $n'$ (the spacers above tower 
  $n_{(\ell-1)}$). As the orbit of $x$ does not leave $C_{n_\ell}$ on $\{0,\ldots,r-1\}$, these subintervals are either contiguous or separated by one integer (corresponding to a spacer in the construction). 
  Moreover, between two successive subintervals corresponding to the fake tower $n'$, there are $3^{k(\ell-1)}$ subintervals corresponding to the real tower $n'$. Recall also that 
  $s$ is the contribution to  $\sum_{0\le j\le r-1}\ind{B}(T^jx)$ of each subinterval corresponding to a climbing into the real tower $n'$. From this it easily follows that
  \[
    \sum_{0\le j\le r-1}\ind{B}(T^jx) \ge \frac{rs}{4h_{n'}}.
  \]
  We consider the three occurrences of tower $n_\ell-1$ inside tower $n_\ell$. Observe that the proportion of points in $B$ which are in the first
  (respectively the second or the third) occurrence is exactly $1/3$. As $r< \frac{1}{3} h_{n_{\ell}}$, the orbit of a point belonging to the first 
  or the second occurrence does not leave $C_{n_\ell}$ on the interval $\{0,\ldots,r-1\}$. Hence the measure of the points in $B$ satisfying the above 
  inequality is at least $2\mu(B)/3$. 
  We get 
  \[
    \left(\int_B \sum_{0\le j\le r-1}\ind{B}(T^jx) \, d\mu(x)\right)^2 \ge  \left(\frac{rs}{4h_{n'}}\frac{2}{3}\mu(B)\right)^2.
  \]
Comparing with~\eqref{eq:ineqsup}, we get the existence of some $M$ satisfying the required inequality for any large enough $r$, which is sufficient to conclude the proof.
\end{proof}

\bibliography{nfc}

\end{document}